\sloppy\pagestyle{plain}
\theoremstyle{definition}
\newtheorem{example}[equation]{Example}
\newtheorem{definition}[equation]{Definition}
\newtheorem{theorem}[equation]{Theorem}
\newtheorem{lemma}[equation]{Lemma}
\newtheorem{corollary}[equation]{Corollary}
\newtheorem{conjecture}[equation]{Conjecture}
\newtheorem*{question*}{Question}
\newtheorem*{problem*}{Problem}
\theoremstyle{remark}
\newtheorem{remark}[equation]{Remark}
\makeatletter\@addtoreset{equation}{section} \makeatother
\def \Q {\mathbb{Q}}
\def \Z {\mathbb{Z}}
\def \A {\mathrm{A}}
\def \F {\mathbb{F}}
\def \C {\mathbb{C}}
\def \D {\mathrm{D}}
\def \SS {\mathrm{S}}
\def \PSp {\mathrm{PSp}}
\def \SL {\mathrm{SL}}
\def \GL {\mathrm{GL}}
\def \PSL {\mathrm{PSL}}
\def \PGL {\mathrm{PGL}}
\def \SU {\mathrm{SU}}
\def \PSU {\mathrm{PSU}}
\def \HaJ {\mathrm{HaJ}}
\def \Sp {\mathrm{Sp}}
\def \qlin {\sim_{\mathbb{Q}}}
\def \le {\leqslant}
\def \ge {\geqslant}
\author{Ivan Cheltsov and Constantin Shramov}
\title{Six-dimensional exceptional quotient singularities}
\thanks{The authors were partially supported by AG Laboratory GU-HSE, RF
government grant 11~11.G34.31.0023.
The~first author was supported by the~grants NSF DMS-0701465 and
EPSRC EP/E048412/1, the~second~author was supported by the~grants
RFFI~08-01-00395-a, RFFI~11-01-00185-a, RFFI~11-01-00336-a,
N.Sh.-1987.2008.1 and N.Sh.-4713.2010.1.}
\address{University of Edinburgh, Edinburgh EH9 3JZ, UK, \texttt{I.Cheltsov@ed.ac.uk}}
\address{Steklov Institute of Mathematics, Moscow 119991, Russia, \texttt{shramov@mccme.ru}}
\address{Laboratory of Algebraic Geometry, GU-HSE, 7 Vavilova street, Moscow 117312, Russia}%
\begin{document}

\begin{abstract}
We classify six-dimensional exceptional quotient singularities and
show that seven-dimensional exceptional quotient singularities do
not exist. Inter alia we prove that the irreducible
six-dimensional projective representation of the sporadic simple
Hall--Janko group gives rise to an exceptional quotient
singularity.
\end{abstract}

\maketitle

%\tableofcontents

We assume that all varieties are projective, normal, and defined
over $\mathbb{C}$.

\section{Introduction}
\label{section:intro}

Let $(V\ni O)$ be a~germ of a Kawamata log terminal singularity
(see \cite[Definition~3.5]{Ko97}), and let $\xi\colon \bar{V}\to
V$ be a~resolution of singularities of the~variety $V$. Then
$$
K_{\bar{V}}\sim_{\mathbb{Q}}\xi^{*}\big(K_{V}\big)+\sum_{i=1}^{r}b_{i}E_{i},
$$
where $E_{i}$ is a~$\xi$-exceptional divisor, and
$b_{i}\in\mathbb{Q}$. Let $B$ be an effective $\mathbb{Q}$-divisor
on $V$. Put
$$
B=\sum_{i=1}^{m}a_{i}B_{i},
$$
where $B_{i}$ is a~prime Weil divisor on $V$, and
$a_{i}\in\mathbb{Q}_{\geqslant 0}$. Suppose that $B$ is
a~$\mathbb{Q}$-Cartier divisor.~Then
$$
\sum_{i=1}^{m}a_{i}\bar{B}_{i}\sim_{\mathbb{Q}}\xi^{*}\Bigg(\sum_{i=1}^{m}a_{i}B_{i}\Bigg)-\sum_{i=1}^{r}c_{i}E_{i},
$$
where $\bar{B}_{i}$ is the~proper transform of the~divisor $B_{i}$
on the~variety $\bar{V}$. Suppose that
$$
\left(\bigcup_{i=1}^{m}\bar{B}_{i}\right)\bigcup\left(\bigcup_{i=1}^{r}E_{i}\right)
$$
is a~divisor with simple normal crossing.
%Suppose that
%$E_{i}=E_{j}\iff i= j$ and $B_{i}= B_{j}\iff i=j$.

\begin{definition}[{\cite[Definition~8.1]{Ko97}}]
\label{definition:lct} The~log canonical threshold of the~divisor
$B$ at $O$ is
$$
\mathrm{c}_{O}\big(X,B\big)=\mathrm{min}\Bigg(\mathrm{min}\Bigg\{\frac{1}{a_{i}}\Bigg\vert\ O\in B_{i}\Bigg\}, \mathrm{min}\Bigg\{\frac{b_{i}+1}{c_{i}}\Bigg\vert\ O\in\xi\big(E_{i}\big)\Bigg\}\Bigg)\in\mathbb{Q}_{>0}\cup\big\{+\infty\big\}%
$$
\end{definition}

\begin{definition}[{\cite[Definition~2.3.1]{ChiYau08}}]
\label{definition:lct-mult} The~log canonical multiplicity of
the~divisor $B$ at $O$ is
$$
\mathrm{\mu}_{O}\big(X,B\big)=\mathrm{max}\left\{\alpha+\beta\ \left|\aligned%
&O\in\xi\Bigg(\Big(\bar{B}_{i_{1}}\cap\ldots\cap \bar{B}_{i_{\alpha}}\Big)\bigcap\Big(E_{k_{1}}\cap\ldots\cap E_{k_{\beta}}\Big)\Bigg)\ \text{and}\\
&\frac{1}{a_{i_{1}}}=\ldots=\frac{1}{a_{i_{\alpha}}}=\frac{b_{k_{1}}+1}{c_{k_{1}}}=\ldots=\frac{b_{k_{\beta}}+1}{c_{k_{\beta}}}=\mathrm{c}_{O}\big(X,B\big)\\
\endaligned\right.\right\}\in\mathbb{Z}_{\geqslant 0},%
$$
\end{definition}

One can show that t
he~numbers $\mathrm{c}_{O}(X,B)$ and $\mathrm{\mu}_{O}(X,B)$ do
not depend on the~choice of the~log resolution $\xi$.

\begin{definition}[{\cite[Definition~2.5]{MarPr99}}]
\label{definition:exceptional} The~singularity $(V\ni O)$
is~exceptional if
$$
\mathrm{\mu}_{O}\big(X,B\big)\leqslant 1
$$
for every effective $\mathbb{Q}$-Cartier $\mathbb{Q}$-divisor $B$
on the~variety $V$.
\end{definition}

Note that Definition~\ref{definition:exceptional} looks different
from \cite[Definition~2.5]{MarPr99}, but they are equivalent.
One can show that exceptional singularities exist in any dimension greater than
$1$ (see~\cite[Example~3.13]{ChSh09}).

\begin{example}
\label{example:DuVal} Suppose that $\mathrm{dim}(V)=2$ and
$-K_{V}$ is Cartier. Then the~singularity $(V\ni O)$ is exceptional
if and only if it is a~Du Val singularity of type~$\mathbb{E}_{6}$,
$\mathbb{E}_{7}$ or~$\mathbb{E}_{8}$.
\end{example}

Let $G$ be a~finite subgroup in $\GL_{n+1}(\mathbb{C})$, where
$n\geqslant 1$. Put
$$
\bar{G}=\phi\big(G\big)\subset\mathrm{Aut}\big(\mathbb{P}^{n}\big)\cong
\PGL_{n+1}\Big(\mathbb{C}\Big),
$$
where
$\phi\colon\GL_{n+1}(\mathbb{C})\to\mathrm{Aut}(\mathbb{P}^{n})$ is
the~natural projection. Put
$$
\mathrm{lct}\Big(\mathbb{P}^{n},\bar{G}\Big)=\mathrm{sup}\left\{\lambda\in\mathbb{Q}\ \left|%
\aligned
&\text{the~log pair}\ \left(\mathbb{P}^{n}, \lambda D\right)\ \text{has log canonical singularities}\\
&\text{for every $\bar{G}$-invariant effective $\mathbb{Q}$-divisor}\ D\sim_{\mathbb{Q}} -K_{\mathbb{P}^{n}}\\
\endaligned\right.\right\}\in\mathbb{R}.%
$$

\begin{remark}[{cf. Appendix~\ref{section:alpha}}]
\label{remark:alpha} It follows from
\cite[Theorem~A.3]{ChSh08c}~that
$$
\mathrm{lct}\Big(\mathbb{P}^{n},\bar{G}\Big)=\alpha_{\bar{G}}\big(\mathbb{P}^{n}\big),
$$
where $\alpha_{\bar{G}}(\mathbb{P}^{n})$ is
the~$\bar{G}$-invariant $\alpha$-invariant introduced in
\cite{Ti87} and \cite{TiYa87}.
\end{remark}

We are going to study the~quotient singularity
$\mathbb{C}^{n+1}\slash G$.

\begin{remark}
\label{remark:reflections} Let $R\subseteq G$ be a~subgroup
generated by all reflections in $G$ (see~\cite[\S 4.1]{Spr77}).
Then
the~quotient $\mathbb{C}^{n+1}\slash R$ is isomorphic
to $\mathbb{C}^{n+1}$ (see~\cite[Theorem~4.2.5]{Spr77}).
Moreover, the~subgroup $R\subseteq G$ is
normal, and the singularity $\C^{n+1}/G$ is isomorphic to the
singularity $\C^{n+1}/(G/R)$.
Note that the~subgroup $R$ is
trivial if $G\subset\SL_{n+1}(\mathbb{C})$.
If $G=R$ (in particular, if~$G$ is a~trivial group),
then the singularity $\C^{n+1}/G\cong\C^{n+1}$
is not exceptional.
\end{remark}

The following result provides a characterization of exceptional
quotient singularities.

\begin{theorem}[{\cite[Theorem~3.17]{ChSh09}}]
\label{theorem:criterion} Let $G\subset\GL_{n+1}(\C)$ be a finite
subgroup that does not contain reflections. Then
$\mathbb{C}^{n+1}\slash G$ is exceptional if and only if for any
$\bar{G}$-invariant effective $\mathbb{Q}$-divisor $D$ on
$\mathbb{P}^{n}$ such that $D\sim_{\mathbb{Q}}
-K_{\mathbb{P}^{n}}$, the~log pair $(\mathbb{P}^{n}, D)$ is
Kawamata log terminal.
\end{theorem}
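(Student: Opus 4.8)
The plan is to pass from the quotient singularity to the smooth space $\C^{n+1}$ and to reduce everything to the geometry of the blow-up of the origin. Since $G$ contains no reflections, the quotient morphism $\pi\colon\C^{n+1}\to V=\C^{n+1}/G$ is étale in codimension one, so that $K_{\C^{n+1}}=\pi^{*}K_{V}$; consequently, for any effective $\Q$-Cartier $\Q$-divisor $B$ on $V$ the pullback $B_{\C}=\pi^{*}B$ is a $G$-invariant effective divisor with $\mathrm{c}_{O}(V,B)=\mathrm{c}_{0}(\C^{n+1},B_{\C})$ and $\mu_{O}(V,B)=\mu_{0}(\C^{n+1},B_{\C})$ (a $G$-equivariant log resolution downstairs pulls back without a ramification term). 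Thus $V$ is exceptional if and only if $\mu_{0}(\C^{n+1},B_{\C})\le 1$ for every $G$-invariant $B_{\C}$. I would then blow up the origin, $\sigma\colon\widetilde{\C^{n+1}}\to\C^{n+1}$, with exceptional divisor $E\cong\P^{n}$ of log discrepancy $n+1$. Writing $m=\mathrm{mult}_{0}B_{\C}$ and letting $D_{E}\subset E=\P^{n}$ be the projectivised tangent cone (a $\bar{G}$-invariant divisor of degree $m$, so that $\tfrac{n+1}{m}D_{E}\qlin -K_{\P^{n}}$), the computational engine is the cone formula $\mathrm{c}_{0}(\C^{n+1},\widetilde{D_{E}})=\min\big(\tfrac{n+1}{m},\,\mathrm{lct}(\P^{n},D_{E})\big)$, together with the discrepancy bound $\mathrm{c}_{0}(\C^{n+1},B_{\C})\le\tfrac{n+1}{m}$ and the lower-semicontinuity estimate $\mathrm{c}_{0}(\C^{n+1},B_{\C})\ge\mathrm{c}_{0}(\C^{n+1},\mathrm{TC})$ under degeneration to the tangent cone. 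This yields a dictionary between $\bar{G}$-invariant divisors $D\qlin -K_{\P^{n}}$ and the central divisor over $O$.

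For the \emph{if} direction, assume $(\P^{n},D)$ is Kawamata log terminal for every $\bar{G}$-invariant $D\qlin -K_{\P^{n}}$. Given $B_{\C}$ as above, apply the hypothesis to $\tfrac{n+1}{m}D_{E}\qlin -K_{\P^{n}}$: being klt means exactly $\mathrm{lct}(\P^{n},D_{E})>\tfrac{n+1}{m}$. The cone formula then forces $\mathrm{c}_{0}(\C^{n+1},\mathrm{TC})=\tfrac{n+1}{m}$, and with the two estimates above this gives $\mathrm{c}_{0}(\C^{n+1},B_{\C})=\tfrac{n+1}{m}$, realised by $E$. Any further divisor over $O$ realising the threshold has centre inside $E$, and by adjunction on the smooth pair $(\widetilde{\C^{n+1}},\,E+\tfrac{n+1}{m}\widetilde{B_{\C}})$ it would produce a non-klt place of $(\P^{n},\tfrac{n+1}{m}D_{E})$, contradicting the hypothesis. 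Hence $E$ is the only such divisor and $\mu_{0}(\C^{n+1},B_{\C})=1$, so $V$ is exceptional.

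For the \emph{only if} direction I would prove the contrapositive: if some $\bar{G}$-invariant $D\qlin -K_{\P^{n}}$ is not klt, I manufacture $B_{\C}$ with $\mu_{0}\ge 2$. The clean case is the existence of a $\bar{G}$-invariant $D'\qlin -K_{\P^{n}}$ with $\mathrm{lct}(\P^{n},D')=1$ (log canonical but not klt): the cone over $D'$ has $m=n+1$, so $\tfrac{n+1}{m}=1$ and both $E$ and the horizontal log canonical places of $(\P^{n},D')$ realise the threshold $\mathrm{c}_{0}=1$; as these meet over $O$, we get $\mu_{0}\ge 2$. To reach this case I would use that $D\mapsto 1/\mathrm{lct}(\P^{n},D)$ is convex, being the supremum of the linear functionals $D\mapsto\mathrm{ord}_{v}(D)/A(v)$ over divisorial valuations $v$; hence along a segment joining a non-klt invariant divisor to a klt one the intermediate value $1$ is attained, producing $D'$. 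When every invariant anticanonical divisor is non-log-canonical, I would instead take the divisor $D_{\mathrm{m}}$ maximising $\mathrm{lct}$ over the compact convex family of invariant anticanonical divisors; convexity forces at least two valuations to be active at the maximum, so $(\P^{n},\mathrm{lct}(\P^{n},D_{\mathrm{m}})\,D_{\mathrm{m}})$ has at least two log canonical places, and the cone over it again yields $\mu_{0}\ge 2$.

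The main obstacle is precisely this last reduction: converting an arbitrary non-klt invariant divisor into a configuration that simultaneously activates the central divisor $E$ and a horizontal log canonical centre whose intersection over $O$ certifies $\mu_{0}\ge 2$. The convexity of $1/\mathrm{lct}$, the analysis of the log canonical threshold maximiser, and the boundary behaviour of the invariant linear system are the delicate points; by contrast, the adjunction and inversion-of-adjunction step relating the restriction to $E$ with the pair $(\P^{n},\tfrac{n+1}{m}D_{E})$ is routine once the cone formula and the discrepancy bookkeeping are in place.
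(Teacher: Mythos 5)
You take a genuinely different route from the one behind the paper's citation (the proof in \cite{ChSh09} runs through Prokhorov's plt--blow-up criterion, i.e.\ Theorem~\ref{theorem:exceptional-criterion} of the Appendix, applied to the quotient by $G$ of the blow-up of the origin, together with descent of the klt condition along the finite morphism $\P^{n}\to E=\P^{n}/\bar{G}$, which matches $\bar{G}$-invariant divisors $\sim_{\Q}-K_{\P^{n}}$ with divisors $\sim_{\Q}-(K_{E}+\Delta)$). Your ``if'' direction is essentially sound: the cone formula, lower semicontinuity of the log canonical threshold under degeneration to the tangent cone, and inversion of adjunction along $E$ on the blow-up do combine to show that $E$ is the unique log canonical place over the origin. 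But your reduction to $\C^{n+1}$ is misstated: a log resolution of $(V,B)$ does not pull back to a log resolution of $(\C^{n+1},\pi^{*}B)$ ``without a ramification term'' --- log discrepancies transform by $a+1\mapsto r(a+1)$, $r$ the ramification index --- and the equality $\mu_{O}(V,B)=\mu_{0}(\C^{n+1},\pi^{*}B)$ is false in general, since two intersecting log canonical places upstairs lying in a single $G$-orbit descend to \emph{one} divisor over $V$, so the multiplicity can drop. What is true is the orbit-wise correspondence of log canonical places; it suffices for the ``if'' direction, but in the ``only if'' direction it obliges you to produce two log canonical places in \emph{distinct} $G$-orbits. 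Your configurations ($E$ plus a horizontal place) do have this property, but you never verify it, and it is precisely where the naive equality would mislead.

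The genuine gap is the residual case of the ``only if'' direction. The set of $\bar{G}$-invariant effective $\Q$-divisors $D\sim_{\Q}-K_{\P^{n}}$ is not compact --- it is not even finite dimensional, as coefficients may be spread over components of unbounded degree --- so ``the divisor maximising $\mathrm{lct}$'' need not exist; and even granting a maximiser, convexity of $D\mapsto 1/\mathrm{lct}(\P^{n},D)$ does not force two active valuations at it: the maximum of an infimum of linear functionals over a convex set can perfectly well be attained on the boundary of the effectivity constraints with a single active valuation. So the case ``every invariant anticanonical divisor is non-log-canonical'' is not handled. The irony is that this case is vacuous, but that requires an argument you do not give: for suitable $d$ the invariant linear system $|\mathcal{O}_{\P^{n}}(d)|^{G}$ is base point free (symmetrise products of linear forms over the group), so by Bertini it contains a smooth member $H$, and for $d>n+1$ the divisor $\frac{n+1}{d}H$ is a $\bar{G}$-invariant klt divisor $\sim_{\Q}-K_{\P^{n}}$; hence an invariant klt divisor always exists and your segment argument (which is correct: $1/\mathrm{lct}$ is a supremum of affine functionals, hence convex and lower semicontinuous, hence continuous on the segment, and the intermediate value $1$ is attained) always applies. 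Without this observation --- or without simply invoking Theorem~\ref{theorem:exceptional-criterion} as the cited proof does --- your proof of the ``only if'' direction is incomplete.
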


\begin{corollary}
\label{corollary:criterion} Let $G\subset\GL_{n+1}(\C)$ be a finite
subgroup that does not contain reflections.  Then
the singularity~$\mathbb{C}^{n+1}\slash G$ is exceptional if
$\mathrm{lct}(\mathbb{P}^{n},\bar{G})>1$. Moreover,
the~singularity $\mathbb{C}^{n+1}\slash G$ is not exceptional if
either $\mathrm{lct}(\mathbb{P}^{n},\bar{G})<1$, or $G$ has
a~semi-invariant of degree~at~most~$n+1$.
\end{corollary}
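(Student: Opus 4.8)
The plan is to derive all three assertions directly from Theorem~\ref{theorem:criterion}, which reduces exceptionality of $\C^{n+1}/G$ to the requirement that every $\bar{G}$-invariant effective $\mathbb{Q}$-divisor $D\sim_{\mathbb{Q}}-K_{\mathbb{P}^{n}}$ give a Kawamata log terminal pair $(\mathbb{P}^{n},D)$. Beyond this theorem, the only inputs I would use are the elementary way in which log discrepancies scale with the coefficient of the boundary, together with, for the last assertion, an explicit construction of a bad divisor out of a low-degree semi-invariant.

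For the implication ``$\mathrm{lct}(\mathbb{P}^{n},\bar{G})>1\Rightarrow$ exceptional'' I would fix a rational number $\lambda$ with $1<\lambda<\mathrm{lct}(\mathbb{P}^{n},\bar{G})$; since the set of admissible coefficients is downward closed, the definition of the invariant guarantees that $(\mathbb{P}^{n},\lambda D)$ is log canonical for every $\bar{G}$-invariant effective $D\sim_{\mathbb{Q}}-K_{\mathbb{P}^{n}}$. On a log resolution the log discrepancy of any divisor $E$ then satisfies
\[
a_{E}\big(\mathbb{P}^{n},D\big)=\Big(1-\tfrac{1}{\lambda}\Big)\,a_{E}\big(\mathbb{P}^{n},0\big)+\tfrac{1}{\lambda}\,a_{E}\big(\mathbb{P}^{n},\lambda D\big),
\]
where $a_{E}(\mathbb{P}^{n},0)>0$ because $\mathbb{P}^{n}$ is smooth and $a_{E}(\mathbb{P}^{n},\lambda D)\ge 0$ by log canonicity. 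As $\lambda>1$, the right-hand side is strictly positive, so $(\mathbb{P}^{n},D)$ is Kawamata log terminal and Theorem~\ref{theorem:criterion} yields exceptionality.

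For the two non-exceptionality statements I would exhibit a single $\bar{G}$-invariant anticanonical $\mathbb{Q}$-divisor failing the klt condition. If $\mathrm{lct}(\mathbb{P}^{n},\bar{G})<1$, then $\lambda=1$ lies above the supremum defining the invariant, so there is a $\bar{G}$-invariant effective $D\sim_{\mathbb{Q}}-K_{\mathbb{P}^{n}}$ with $(\mathbb{P}^{n},D)$ not log canonical, hence not klt, and Theorem~\ref{theorem:criterion} gives non-exceptionality. If instead $G$ has a non-constant semi-invariant $f$ of degree $d\le n+1$, meaning $g(f)=\chi(g)f$ for a character $\chi\colon G\to\C^{*}$, then the zero divisor $\mathrm{div}(f)=\sum_{i}m_{i}S_{i}$ is fixed by $G$ (since $\chi(g)$ is a nonzero scalar) and hence $\bar{G}$-invariant, with $\sum_{i}m_{i}\deg(S_{i})=d$. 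Setting
\[
D=\frac{n+1}{d}\,\mathrm{div}(f)\sim_{\mathbb{Q}}(n+1)H\sim_{\mathbb{Q}}-K_{\mathbb{P}^{n}}
\]
for $H$ a hyperplane, I get a $\bar{G}$-invariant effective $\mathbb{Q}$-divisor in which every $S_{i}$ occurs with coefficient $\tfrac{(n+1)m_{i}}{d}\ge\tfrac{n+1}{d}\ge 1$; thus $(\mathbb{P}^{n},D)$ is not klt and $\C^{n+1}/G$ is not exceptional.

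I expect no serious obstacle here: granting Theorem~\ref{theorem:criterion}, the first two assertions are formal consequences of the scaling identity for log discrepancies, and the third is the explicit computation above. The only point I would watch is the boundary value $\mathrm{lct}(\mathbb{P}^{n},\bar{G})=1$: the inequality $\mathrm{lct}<1$ deliberately excludes it, while the semi-invariant clause recovers the relevant non-exceptional cases precisely through the coefficient-one estimate in the final step.
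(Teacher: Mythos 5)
Your proof is correct and follows exactly the route the paper intends: the corollary is stated as an immediate consequence of Theorem~\ref{theorem:criterion}, and your three steps (the scaling identity for log discrepancies when $\mathrm{lct}(\mathbb{P}^{n},\bar{G})>1$, the existence of a non-log-canonical invariant divisor when $\mathrm{lct}(\mathbb{P}^{n},\bar{G})<1$, and the divisor $\frac{n+1}{d}\,\mathrm{div}(f)$ built from a semi-invariant $f$ of degree $d\leqslant n+1$, which has a component of coefficient at least $1$) are precisely the standard details the paper leaves implicit. No gaps; the argument is complete as written.
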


Note that the~assumption that $G$ contains no reflections is
crucial for Theorem~\ref{theorem:criterion}.

\begin{example}
\label{example:reflections} Let $G\subset\GL_4(\C)$ be
the~subgroup $\# 32$ in \cite[Table~VII]{SheTo54}. Then $G$ is
generated by reflections (see \cite{SheTo54}). Thus,
the~singularity $\mathbb{C}^{4}\slash G$ is not exceptional by
Remark~\ref{remark:reflections}. On the other hand, it follows
from \cite[Theorem~4.13]{ChSh09} that
$\mathrm{lct}(\mathbb{P}^{3},\bar{G})\geqslant 5/4$.
One can produce similar examples for two-dimensional and three-dimensional
singularities.
\end{example}

\begin{definition}[see \cite{Bli17}]
\label{definition:primitive} The~subgroup $G\subset\GL_{n+1}(\C)$ is
primitive if there is~no~non-trivial decomposition
$$
\mathbb{C}^{n+1}=\bigoplus_{i=1}^{r}V_{i}
$$
such that
for any $g\in G$ and any $i$ there is some $j=j(g)$ such that
$g(V_{i})=V_{j}$.
The~subgroup $\bar{G}\subset\PGL_{n+1}(\C)$
is primitive if $G$ is primitive.
\end{definition}

Up to conjugation, there are finitely many primitive finite
subgroups in $\SL_{n+1}(\mathbb{C})$~(see~\cite{Col07}).

\begin{theorem}[{see~\cite[Proposition~2.1]{Pr00}
or~\cite[Corollary~3.20]{ChSh09}}]
\label{theorem:primitive} Let $G\subset\GL_{n+1}(\C)$ be a finite
subgroup that does not contain reflections. If
$\mathbb{C}^{n+1}\slash G$ is exceptional, then $G$ is primitive.
\end{theorem}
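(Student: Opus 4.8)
The plan is to prove the contrapositive: assuming that $G$ is not primitive, I will exhibit a $\bar{G}$-invariant effective $\mathbb{Q}$-divisor $D\sim_{\mathbb{Q}}-K_{\mathbb{P}^{n}}$ for which the log pair $(\mathbb{P}^{n},D)$ fails to be Kawamata log terminal; by Theorem~\ref{theorem:criterion} this shows that $\mathbb{C}^{n+1}/G$ is not exceptional. By Definition~\ref{definition:primitive}, imprimitivity supplies a decomposition $\mathbb{C}^{n+1}=\bigoplus_{i=1}^{r}V_{i}$ with $r\geqslant 2$ and each $V_{i}\neq 0$, such that $G$ permutes the summands; note that $g(V_{i})=V_{j}$ forces $\dim V_{i}=\dim V_{j}$. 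Choosing coordinates adapted to this decomposition, the linear subspaces $\Lambda_{i}=\mathbb{P}(V_{i})\subset\mathbb{P}^{n}$ of dimension $\dim V_{i}-1$ are permuted by $\bar{G}$, and $\operatorname{codim}_{\mathbb{P}^{n}}\Lambda_{i}=(n+1)-\dim V_{i}=\sum_{j\neq i}\dim V_{j}\geqslant 1$, so each $\Lambda_{i}$ is a proper subvariety.

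Let $F_{0}=\prod_{k=1}^{n+1}x_{k}$ be the product of all the coordinates, so that $\{F_{0}=0\}$ is the union of the $n+1$ coordinate hyperplanes and has degree $n+1$. Since a general point of $\Lambda_{i}$ has nonzero coordinates exactly in the $i$-th block, the order of vanishing of $F_{0}$ along $\Lambda_{i}$ equals the number of coordinates outside that block; thus $\operatorname{mult}_{\Lambda_{i}}\{F_{0}=0\}=\sum_{j\neq i}\dim V_{j}=\operatorname{codim}_{\mathbb{P}^{n}}\Lambda_{i}$, and the same holds along every $\Lambda_{j}$. This divisor is not $\bar{G}$-invariant once some block has dimension at least $2$, so I pass to the orbit $\mathcal{O}=\bar{G}\cdot\{F_{0}=0\}$, which is finite because $\bar{G}$ is finite, and set $D=\frac{1}{|\mathcal{O}|}\sum_{H\in\mathcal{O}}H$. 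Then $D$ is $\bar{G}$-invariant and $\deg D=n+1$, so $D\sim_{\mathbb{Q}}-K_{\mathbb{P}^{n}}$. Because multiplicity along a fixed subvariety is additive on effective divisors and $\operatorname{mult}_{\Lambda_{i}}(gH)=\operatorname{mult}_{g^{-1}\Lambda_{i}}(H)$, while $g^{-1}\Lambda_{i}$ is some $\Lambda_{j}$ with $\dim\Lambda_{j}=\dim\Lambda_{i}$, every summand contributes the same value, and hence $\operatorname{mult}_{\Lambda_{i}}D=\operatorname{codim}_{\mathbb{P}^{n}}\Lambda_{i}$.

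To finish, I blow up the smooth subvariety $\Lambda_{i}$: the exceptional divisor $E$ has log discrepancy $\operatorname{codim}_{\mathbb{P}^{n}}\Lambda_{i}$ with respect to $\mathbb{P}^{n}$, so its log discrepancy with respect to $(\mathbb{P}^{n},D)$ is $\operatorname{codim}_{\mathbb{P}^{n}}\Lambda_{i}-\operatorname{mult}_{\Lambda_{i}}D=0$. Consequently $(\mathbb{P}^{n},D)$ is not Kawamata log terminal, and Theorem~\ref{theorem:criterion} yields the assertion. The point requiring the most care is the passage to an honest $\bar{G}$-invariant $\mathbb{Q}$-divisor without increasing the degree beyond $n+1$; orbit-averaging resolves this precisely because $\bar{G}$ preserves the configuration $\{\Lambda_{1},\dots,\Lambda_{r}\}$ and the chosen $F_{0}$ has identical multiplicity along every $\Lambda_{i}$. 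I emphasize that the multiplicity equals, rather than exceeds, the codimension, so $(\mathbb{P}^{n},D)$ is merely strictly log canonical; this already suffices, since Theorem~\ref{theorem:criterion} only requires failure of the Kawamata log terminal property, and it is exactly this that makes the argument uniform, covering the borderline case $r=2$ and arbitrary, possibly unequal, block dimensions.
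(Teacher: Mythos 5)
Your argument is correct. Strictly speaking there is no internal proof to compare it with: the paper does not prove Theorem~\ref{theorem:primitive}, but cites it from \cite[Proposition~2.1]{Pr00} and \cite[Corollary~3.20]{ChSh09}. Your proof is a legitimate self-contained substitute and follows the same general strategy as those sources: pass to the contrapositive, use the imprimitivity decomposition $\C^{n+1}=\bigoplus_{i=1}^{r}V_{i}$ to obtain a $\bar{G}$-invariant configuration of linear subspaces $\Lambda_{i}=\P(V_{i})$, manufacture from it a $\bar{G}$-invariant effective $\Q$-divisor $D\sim_{\Q}-K_{\P^{n}}$ that is too singular along the $\Lambda_{i}$, and conclude by Theorem~\ref{theorem:criterion}. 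The two computations carrying the weight are both right: each translate $g\{F_{0}=0\}$ of the coordinate-hyperplane arrangement has multiplicity exactly $\mathrm{codim}(\Lambda_{i})$ along $\Lambda_{i}$, because $g^{-1}(\Lambda_{i})$ is again some $\Lambda_{j}$ of the same dimension and $\{F_{0}=0\}$ has multiplicity equal to codimension along every $\Lambda_{j}$; and multiplicity equal to codimension gives log discrepancy $0$ for the divisor extracted by blowing up $\Lambda_{i}$, which kills the Kawamata log terminal property. Only one degenerate case deserves a sentence that you did not supply: if some $\Lambda_{i}$ has codimension $1$ in $\P^{n}$ (that is, $\dim V_{i}=n$), then the blow-up of $\Lambda_{i}$ is an isomorphism and produces no exceptional divisor, so the sentence about blowing up does not literally apply; but in that case $\Lambda_{i}$ is a hyperplane occurring in $D$ with coefficient $1$, which already violates the Kawamata log terminal condition, so the conclusion is unaffected.
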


Exceptional quotient singularities of dimension up to $5$ are
classified in \cite{Sho93}, \cite{MarPr99}, \cite{ChSh09}.

\begin{theorem}[{\cite{Sho93}, \cite{MarPr99},
\cite[Theorem~1.22]{ChSh09}}]
\label{theorem:Vanya-Kostya-invariants} Let $G\subset\GL_{n+1}(\C)$
be a finite subgroup without reflections. If $n\leqslant 4$, then
the~following are equivalent:
\begin{itemize}
\item the~singularity $\mathbb{C}^{n+1}\slash G$ is exceptional,%
\item the~inequality $\mathrm{lct}(\mathbb{P}^{n},\bar{G})\geqslant (n+2)/(n+1)$ holds,%
\item the~group $G$ is primitive and has no semi-invariants of degree at most $n+1$.%
\end{itemize}
\end{theorem}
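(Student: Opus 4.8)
The plan is to label the three conditions (i), (ii), (iii) in the order given and to prove them equivalent through the cycle (ii)$\Rightarrow$(i)$\Rightarrow$(iii)$\Rightarrow$(ii). The first two implications are formal consequences of the results quoted above, so I would dispose of them quickly; all the real content sits in the last implication, which for $n\leqslant 4$ I would reduce to the explicit classification of primitive finite linear groups. For (ii)$\Rightarrow$(i), since $(n+2)/(n+1)>1$ the hypothesis gives $\mathrm{lct}(\mathbb{P}^{n},\bar{G})>1$, and Corollary~\ref{corollary:criterion} immediately yields that $\mathbb{C}^{n+1}/G$ is exceptional. For (i)$\Rightarrow$(iii), exceptionality forces $G$ to be primitive by Theorem~\ref{theorem:primitive}, and the contrapositive of the final clause of Corollary~\ref{corollary:criterion} rules out any semi-invariant of degree at most $n+1$, since such a semi-invariant would make the singularity non-exceptional.

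The substantive step is (iii)$\Rightarrow$(ii). Here I would pass to $\alpha_{\bar{G}}(\mathbb{P}^{n})$ via Remark~\ref{remark:alpha} and show that for every $\bar{G}$-invariant effective $\mathbb{Q}$-divisor $D\sim_{\mathbb{Q}}-K_{\mathbb{P}^{n}}$ the pair $(\mathbb{P}^{n},\tfrac{n+2}{n+1}D)$ is log canonical. Arguing by contradiction, the non-log-canonical locus of this pair is a proper closed $\bar{G}$-invariant subset; passing to an irreducible component and its $\bar{G}$-orbit I would obtain a $\bar{G}$-invariant subvariety $Z$ along which the pair fails to be log canonical. The hypotheses of (iii) constrain $Z$: primitivity excludes invariant proper linear subspaces and any $G$-compatible splitting of $\mathbb{P}^{n}$, while the absence of semi-invariants of degree at most $n+1$ forbids $\bar{G}$-invariant hypersurfaces of degree at most $n+1$, so $Z$ cannot be a divisor of small degree. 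Since $\deg D=n+1$, the total multiplicity $D$ can carry along the orbit of $Z$ is bounded, and comparing this with the size forced on a $\bar{G}$-orbit should produce the contradiction---but only once the orbit structure of $\bar{G}$ is known explicitly.

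This is where the classification enters, and I would organize the proof around it: for $n\leqslant 4$ one runs through Blichfeldt's list \cite{Bli17} of primitive finite subgroups of $\GL_{n+1}(\mathbb{C})$ with $n+1\in\{2,3,4,5\}$, discards those possessing a semi-invariant of degree at most $n+1$, and for each of the finitely many survivors verifies the required bounds on orbit sizes and on the degrees of invariant subvarieties. The hard part will be exactly this case analysis: establishing the sharp inequality $\alpha_{\bar{G}}(\mathbb{P}^{n})\geqslant(n+2)/(n+1)$ group by group. The difficulty is that the estimate is on the boundary---a group whose smallest invariant hypersurface has degree precisely $n+2$ attains equality---so the multiplicity computations must be performed exactly, and one must separately eliminate the higher-codimension invariant subvarieties of intermediate degree that could still support a non-log-canonical center and that are not detected by semi-invariants alone. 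It is the finiteness of the low-dimensional classification and the clean gap it exhibits---the $\alpha$-invariant jumping from values $\leqslant 1$ directly to values $\geqslant(n+2)/(n+1)$, with nothing in between---that makes $n\leqslant 4$ tractable; this is the very feature one expects to fail, or at least to require far more work, in higher dimensions.
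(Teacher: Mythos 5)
Your two formal implications are correct and cost essentially nothing: $\mathrm{lct}(\mathbb{P}^{n},\bar{G})\geqslant (n+2)/(n+1)>1$ gives exceptionality by Corollary~\ref{corollary:criterion}, and exceptionality gives primitivity by Theorem~\ref{theorem:primitive} together with the absence of semi-invariants of degree at most $n+1$ by the last clause of Corollary~\ref{corollary:criterion}. Bear in mind, however, that the paper itself contains no proof of this statement: it is quoted as a known result, with the content established in \cite{Sho93} (for $n=1$), \cite{MarPr99} (for $n=2$) and \cite[Theorem~1.22]{ChSh09} (for $n=3,4$). So the only meaningful comparison is with the strategy of those references, and your plan for the remaining implication does match their strategy in outline: classify the primitive subgroups, discard those with semi-invariants of degree at most $n+1$, and prove the sharp bound on the $\alpha$-invariant for each survivor.

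The genuine gap is that this remaining implication, (iii)$\Rightarrow$(ii), is never proved in your write-up, only planned, and it is the entire content of the theorem --- it occupies most of \cite{MarPr99} and of the relevant part of \cite{ChSh09}. Moreover, the mechanism you sketch for handling each group (bounding the multiplicity that $D$ can carry along a $\bar{G}$-orbit and comparing with orbit sizes) is too weak on its own: a non-log-canonical center need not be a point orbit or a small-degree hypersurface; it can be a positive-dimensional subvariety of large degree, and primitivity plus absence of small semi-invariants does not directly control it. The arguments that actually close such cases require the machinery this paper deploys for its six-dimensional analogues (Theorems~\ref{theorem:Hall-Janko} and~\ref{theorem:A7}): Lemma~\ref{lemma:Kawamata-Shokurov-trick} to isolate a minimal center of log canonical singularities, Kawamata subadjunction (Theorem~\ref{theorem:Kawamata}), Nadel-type vanishing (Theorem~\ref{theorem:Shokurov-vanishing}) combined with representation-theoretic information such as the irreducibility of the symmetric powers $\mathrm{Sym}^{k}(V)$, and degree bounds such as Lemma~\ref{lemma:degree}, applied dimension by dimension to the possible centers. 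Until that analysis is carried out group by group --- or explicitly delegated to the cited papers, which is what the authors do --- your proposal establishes only the two easy implications.
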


The~assertion of Theorem~\ref{theorem:Vanya-Kostya-invariants} is
no longer true if $n\geqslant 5$ (see
\cite[Example~3.25]{ChSh09}).

\begin{remark}
\label{remark:Shokurov-n-2} Let $G\subset\GL_2(\C)$ be a finite
subgroup. Then
$$
\mathrm{lct}\Big(\mathbb{P}^{1},\bar{G}\Big)=\left\{\aligned
&6\ \text{if}\ \bar{G}\cong\A_{5},\\
&4\ \text {if}\ \bar{G}\cong\SS_{4},\\
&2\ \text {if}\ \bar{G}\cong\A_{4},\\
&1\ \text {if}\ \bar{G}\cong\D_{m},\\
&1/2\ \text {if}\ \bar{G}\cong\Z_{m}.\\
\endaligned
\right.
$$
\end{remark}

The main purpose of this paper is to prove
the~following result.

\begin{theorem}
\label{theorem:main} Let $G\subset\SL_6(\mathbb{C})$ be a finite
subgroup. Then the~following are equivalent:
\begin{itemize}
\item the~singularity $\mathbb{C}^{6}\slash G$ is exceptional,%

\item the~inequality
$\mathrm{lct}(\mathbb{P}^{5},\bar{G})\geqslant 7/6$ holds,

\item either $\bar{G}$ is the~Hall--Janko group $\HaJ$ (see \cite{Li68}, \cite{Li70}),
or $G\cong 6.\A_{7}$ and $\bar{G}\cong\A_{7}$.%
\end{itemize}
\end{theorem}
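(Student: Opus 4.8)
The plan is to prove the cycle of implications (i)$\Rightarrow$(iii)$\Rightarrow$(ii)$\Rightarrow$(i), where (i), (ii), (iii) denote the three conditions in the order stated; this makes all of them equivalent. The implication (ii)$\Rightarrow$(i) is immediate: since $7/6>1$, the bound $\mathrm{lct}(\mathbb{P}^5,\bar{G})\ge 7/6$ gives $\mathrm{lct}(\mathbb{P}^5,\bar{G})>1$, so $\mathbb{C}^6/G$ is exceptional by Corollary~\ref{corollary:criterion}. Throughout I use that $G\subset\SL_6(\mathbb{C})$ contains no reflections (Remark~\ref{remark:reflections}), so Corollary~\ref{corollary:criterion} and Theorem~\ref{theorem:primitive} are available. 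The real content therefore lies in the classification step (i)$\Rightarrow$(iii) and the log canonical threshold estimate (iii)$\Rightarrow$(ii).

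For (i)$\Rightarrow$(iii), assume $\mathbb{C}^6/G$ is exceptional. By Theorem~\ref{theorem:primitive} the group $G$ is primitive, and by Corollary~\ref{corollary:criterion} it has no semi-invariant of degree at most $6$. I would then invoke the classification of primitive finite subgroups of $\GL_6(\mathbb{C})$ (Lindsey; cf.~\cite{Li68},~\cite{Li70},~\cite{Col07}): up to scalars the list is finite and explicit. The task is to run through it and, for every group other than those with $\bar{G}\cong\HaJ$ or $\bar{G}\cong\A_7$, exhibit a semi-invariant of degree $\le 6$ and so exclude it. Concretely, for each candidate one decomposes the symmetric powers $\mathrm{Sym}^d(V)$ for $d\le 6$, equivalently reads off the low-order coefficients of the Molien series, and detects a one-dimensional constituent; for all but the two exceptional groups such a constituent occurs. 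The surviving groups pass this test for structural reasons: $\A_7$ and $\HaJ$ are simple, the relevant covers $6.\A_7$ and the six-dimensional cover of $\HaJ$ are perfect (hence carry no nontrivial linear character, so every semi-invariant is an invariant), and the centre acts on $\mathrm{Sym}^d(V)$ by scalars, forcing invariants into degrees divisible by the order of the centre. Confirming the absence of an invariant of degree at most $6$ (the only delicate case being an invariant sextic) is what shows these two groups are genuinely not excluded.

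For (iii)$\Rightarrow$(ii) I must prove $\mathrm{lct}(\mathbb{P}^5,\bar{G})\ge 7/6$ for $\bar{G}\cong\HaJ$ and for $\bar{G}\cong\A_7$. Here $-K_{\mathbb{P}^5}\sim 6H$, so a competitor is a $\bar{G}$-invariant effective $\mathbb{Q}$-divisor $D\sim_{\mathbb{Q}}6H$, and the goal is that $(\mathbb{P}^5,\tfrac{7}{6}D)$ be log canonical. Arguing by contradiction, suppose $(\mathbb{P}^5,\lambda D)$ fails to be log canonical for some $\lambda<7/6$. Since $D$ is $\bar{G}$-invariant, so is the non-log-canonical locus, and one may choose a $\bar{G}$-invariant centre $Z$ of non-log-canonical singularities (the orbit of a minimal centre, or an invariant connected component). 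The strategy is then to exclude each possible dimension of $Z$ in turn: first determine all $\bar{G}$-invariant subvarieties of $\mathbb{P}^5$ of small dimension and degree (the $\bar{G}$-orbits of points and the invariant curves and surfaces) from the explicit geometry of the six-dimensional representations, and then combine multiplicity and degree bounds with inversion of adjunction, reducing on an invariant surface to the two-dimensional log canonical thresholds recorded in Remark~\ref{remark:Shokurov-n-2}. The absence of low-degree semi-invariants established above guarantees there is no invariant hypersurface, no invariant line, and no small-degree invariant subvariety to support a bad centre, which is precisely what keeps the multiplicities of $D$ below the critical values.

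The main obstacle is this last estimate. The implication (ii)$\Rightarrow$(i) is formal, and the classification step reduces to bookkeeping once Lindsey's list and the Molien series are in hand. By contrast, the sharp bound $\mathrm{lct}\ge 7/6$ demands a genuinely geometric analysis: one must produce a complete, verified list of the low-dimensional $\bar{G}$-invariant subvarieties of $\mathbb{P}^5$ for both $\HaJ$ and $\A_7$, and then carry out a careful dimension-by-dimension exclusion of non-log-canonical centres in which the constant $7/6$ is tight and the multiplicity inequalities leave almost no slack. Treating the case where $Z$ is a point (via the orbit sizes) and the cases where $Z$ is a curve or a surface (via adjunction and the two-dimensional thresholds) are exactly the places where the explicit structure of the two groups is indispensable.
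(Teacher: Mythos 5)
Your overall architecture (the cycle (i)$\Rightarrow$(iii)$\Rightarrow$(ii)$\Rightarrow$(i)) and the formal step (ii)$\Rightarrow$(i) via Corollary~\ref{corollary:criterion} agree with the paper, but each of the two substantive implications has a genuine gap. In (i)$\Rightarrow$(iii) you claim that, running through Lindsey's list, every primitive group whose image is not $\HaJ$ or $\A_7$ exhibits a semi-invariant of degree at most $6$. This is false: the tensor-product groups of type I in Theorem~\ref{theorem:Feit} (images of $G_1\times G_2\subset\SL_2(\C)\times\SL_3(\C)$ acting on $\C^2\otimes\C^3\cong\C^6$) are primitive and in general have no semi-invariant of degree $\leqslant 6$, which is precisely why Theorem~\ref{theorem:dim-6-invariants} lists type I alongside types VIII and XVI as exceptions. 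The paper excludes them by a different mechanism, Lemma~\ref{lemma:Segre}: the invariant linear system of quadrics through the Segre variety $\mathbb{P}^1\times\mathbb{P}^2\subset\mathbb{P}^5$ forces $\mathrm{lct}(\mathbb{P}^5,\bar{G})<1$, hence non-exceptionality. Without this step your implication (i)$\Rightarrow$(iii) does not close.

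More seriously, the key implication (iii)$\Rightarrow$(ii) is only a plan, and the plan rests on claims that would fail. Absence of semi-invariants of degree $\leqslant 6$ rules out invariant hypersurfaces of degree $\leqslant 6$ (so a minimal center $S$ has $\dim S\ne 4$), but it gives no control over invariant curves, surfaces or threefolds: these are not cut out by a single semi-invariant, and there is no feasible ``complete, verified list'' of such invariant subvarieties for $\HaJ$ or $\A_7$. Indeed, in the paper's proof of Theorem~\ref{theorem:A7} a putative curve center of degree $15$ and genus $10$ survives every degree bound (Lemma~\ref{lemma:degree}) and is killed only by comparing the Riemann--Roch value $h^{0}(\mathcal{O}_{S}(5H))=66$ with the values permitted by the decomposition of $\mathrm{Sym}^{5}(V)$ into irreducibles. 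The paper's actual method in Theorems~\ref{theorem:Hall-Janko} and~\ref{theorem:A7} is entirely different from what you propose: after the Kawamata--Shokurov trick (Lemma~\ref{lemma:Kawamata-Shokurov-trick}), Nadel vanishing (Theorem~\ref{theorem:Shokurov-vanishing}) together with the irreducibility (or explicit decomposition) of $\mathrm{Sym}^{n}(V)$ for $n\leqslant 5$ pins down the possible values of $h^{0}(\mathcal{O}_{Z}\otimes\mathcal{O}_{\mathbb{P}^{5}}(n))$; Kawamata subadjunction (Theorem~\ref{theorem:Kawamata}) makes $S$ normal with rational singularities, so $h^{0}(\mathcal{O}_{S}(nH))=\chi(\mathcal{O}_{S}(nH))$ is a polynomial in $n$ of degree $\leqslant\dim S\leqslant 3$; and these two constraints are numerically incompatible (for $\HaJ$, the values $6,21,56,126,252$ cannot be interpolated by a cubic; for $\A_7$, a short case analysis in each dimension gives a contradiction). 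No classification of invariant subvarieties, no multiplicity estimates, and no inversion of adjunction enter the argument; note also that Remark~\ref{remark:Shokurov-n-2}, which you invoke, records thresholds on $\mathbb{P}^1$, not on surfaces. As it stands, your proof of the crucial bound $\mathrm{lct}(\mathbb{P}^{5},\bar{G})\geqslant 7/6$ is missing.
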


\begin{proof}
The required assertion follows from
Theorems~\ref{theorem:dim-6-invariants}, \ref{theorem:Hall-Janko},
\ref{theorem:A7} and Lemma~\ref{lemma:Segre}.
\end{proof}

As far as we know, Theorem~\ref{theorem:main} gives the first
appearance of the~Hall--Janko group $\HaJ$ in algebraic geometry.
The assertion of Theorem~\ref{theorem:main} gives new examples of
normalized K\"ahler--Ricci iterations that converge to
the~Fubini--Study metric on $\mathbb{P}^5$ (see~\cite{Rub08},
cf.~\cite[Question~1.9]{ChSh09}). Furthermore, it follows from
Corollary~\ref{corollary:criterion} that
Theorem~\ref{theorem:main} can be considered as a~classification
of six-dimensional exceptional quotient singularities.

\begin{remark}
Suppose that $\bar{G}\subset\PGL_6(\mathbb{C})$. If
$\bar{G}\cong\HaJ$, then there exist two subgroups
in~$\SL_6(\mathbb{C})$ whose images in $\PGL_6(\mathbb{C})$
coincide with $\bar{G}$. Namely, one of them is isomorphic
to~$2.\HaJ$, and another one is isomorphic to the~extension of
the~subgroup $2.\HaJ\subset\SL_6(\mathbb{C})$ by a~scalar matrix
with non-zero entries equal to a~primitive root of unity of degree
$6$. Moreover, up to conjugation $\PGL_6(\mathbb{C})$ contains a
unique subgroup isomorphic to $\HaJ$. On the other hand,~$\PGL_6(\mathbb{C})$
contains non-conjugate subgroups isomorphic
to $\mathbb{A}_{7}$. Furthermore, if $\bar{G}\cong\mathbb{A}_{7}$
and the singularity $\mathbb{C}^{6}\slash G$ is exceptional, then we must
necessarily  have $G\cong 6.\A_{7}$, which uniquely determines the
subgroup $\bar{G}\subset\PGL_6(\mathbb{C})$ up to conjugation.
Other alternatives for a minimal lift $G$ of a primitive group
$\A_7\cong\bar{G}\subset\PGL_6(\C)$ are
$G\cong 3.\A_7$ and $G\cong\A_7$ (see Theorem~\ref{theorem:Feit}),
which happens for two other classes of subgroups
$\A_7\cong\bar{G}\subset\PGL_6(\C)$. In the latter cases the singularity
$\C^6\slash G$ is not exceptional.
\end{remark}

Finally, we prove the following surprising result.

\begin{theorem}[{cf.~\cite[Example~3.13]{ChSh09}}]
\label{theorem:dim-7}
There are no exceptional quotient singularities of dimension $7$.
\end{theorem}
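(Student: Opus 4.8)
The plan is to prove that there are no exceptional quotient singularities of dimension $7$. By Remark~\ref{remark:reflections} we may assume $G\subset\GL_7(\C)$ contains no reflections, and by Theorem~\ref{theorem:primitive} we may assume $G$ is primitive, since a non-primitive group cannot give an exceptional singularity. By Corollary~\ref{corollary:criterion} it suffices to show that for every such $G$ the singularity $\C^7/G$ fails to be exceptional; concretely, it is enough to exhibit either a semi-invariant of $G$ of degree at most $7$, or a $\bar{G}$-invariant effective $\Q$-divisor $D\sim_{\Q}-K_{\P^6}$ for which $(\P^6,D)$ is not Kawamata log terminal (equivalently, to show $\mathrm{lct}(\P^6,\bar{G})\le 1$). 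Thus the entire problem reduces to a finite check over the classification of primitive finite subgroups of $\GL_7(\C)$ acting on $\P^6$.

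First I would assemble the complete list of primitive finite subgroups $\bar{G}\subset\PGL_7(\C)$, using the classical classification (as invoked earlier via \cite{Col07}, \cite{Bli17}, and the character-theoretic results underlying Theorem~\ref{theorem:Feit}). Since $7$ is prime, the structure of groups with a faithful $7$-dimensional projective representation is very constrained: the relevant primitive possibilities come from a short list of finite simple or almost-simple groups together with their central extensions. I would then go through each candidate and, for the generic ones, locate a low-degree semi-invariant. The key arithmetic point is that a group admitting an irreducible $7$-dimensional representation with no semi-invariant of degree $\le 7$ is extremely rare; for most candidates on the list the ring of invariants (or the symmetric powers $\mathrm{Sym}^d$ of the standard representation for $d\le 7$) contains an invariant, which immediately yields non-exceptionality via Corollary~\ref{corollary:criterion}.

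The main obstacle is the handful of borderline groups that survive the semi-invariant test and require a genuine $\mathrm{lct}$ computation, analogous to the role played by $\HaJ$ and $6.\A_7$ in dimension $6$. For these I would construct an explicit $\bar{G}$-invariant divisor $D\sim_{\Q}-K_{\P^6}$ violating log canonicity: typically one takes $D$ supported on the union of a $\bar{G}$-orbit of hyperplanes or of a special invariant hypersurface, and shows that the multiplicity of $D$ along some $\bar{G}$-orbit of points (or along a low-dimensional invariant linear subspace) is large enough to force $\mathrm{lct}(\P^6,\bar{G})\le 1$. The decisive feature in dimension $7$ is that, unlike dimension $6$, every primitive candidate admits such a construction: either the relevant invariant orbit of points is too small, or there is a suitable invariant hypersurface of low degree, so the threshold never reaches the value $8/7$ required by Theorem~\ref{theorem:Vanya-Kostya-invariants}-type reasoning. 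I expect the bookkeeping for the almost-simple exceptional candidates to be the most delicate step, as it mirrors the detailed orbit and invariant-theory analysis carried out for the six-dimensional case.
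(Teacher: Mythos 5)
Your overall skeleton matches the paper's: reduce via Remark~\ref{remark:reflections} and Theorem~\ref{theorem:primitive} to primitive (hence quasiprimitive) subgroups, invoke the classification of quasiprimitive finite subgroups of $\SL_7(\C)$ (Wales's theorem, stated as Theorem~\ref{theorem:Feit-7}), and kill each candidate by a semi-invariant of degree at most $7$ via Corollary~\ref{corollary:criterion}. But there is a genuine gap at exactly the point you defer: you posit ``borderline groups that survive the semi-invariant test'' and assert, with no construction, that ``every primitive candidate admits'' a $\bar{G}$-invariant divisor violating log canonicity. That assertion is the entire content of the theorem in your version of the argument, and you give no proof of it. In the paper no log canonical threshold computation occurs in dimension $7$ at all: the almost-simple candidates (types II--VII) all fall to routine checks --- conjugacy into $\SL_7(\Q)$ giving a degree-$2$ invariant by Lemma~\ref{lemma:invariant-quadric}, a short table of minimal invariant degrees, and Lemma~\ref{lemma:normal-extension} for the extensions --- and the one genuinely delicate case is the type I family, i.e.\ the subgroups of $G_7=\mathrm{Norm}_{\SL_7(\C)}(\mathbb{H}_7)\cong\mathbb{H}_7\rtimes\SL_2(\F_7)$, the normalizer of the Heisenberg group of order $7^3$. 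This is not an almost-simple ``exceptional candidate'' analogous to $\HaJ$ or $6.\A_7$, and your proposal never addresses it. The decisive input, cited from Miele's thesis (see also Melliez--Ranestad), is that $G_7$ has an invariant of degree exactly $7$; since an invariant of $G_7$ restricts to an invariant of every subgroup, this disposes of the whole type I family at once, and Corollary~\ref{corollary:criterion} finishes. Without this fact (or a substitute for it) your argument does not close.

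A secondary error: your final criterion is wrong. Showing that $\mathrm{lct}(\P^6,\bar{G})$ ``never reaches the value $8/7$'' does not imply non-exceptionality, because the equivalence of Theorem~\ref{theorem:Vanya-Kostya-invariants} is proved only for $n\leqslant 4$ and is explicitly false for $n\geqslant 5$. By Theorem~\ref{theorem:criterion} you must either exhibit a semi-invariant of degree at most $7$, or produce a $\bar{G}$-invariant effective $D\qlin -K_{\P^6}$ with $(\P^6,D)$ not Kawamata log terminal; note also that $\mathrm{lct}(\P^6,\bar{G})=1$ is not by itself equivalent to the existence of such a non-klt pair, since the supremum need not be attained. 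Values of the threshold in $[1,8/7)$ decide nothing, so the last step of your plan would fail even if the hypothetical borderline divisor constructions succeeded.
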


\medskip

The plan of the paper is as follows. In
Section~\ref{section:preliminaries} we collect well known
auxiliary results. In Section~\ref{section:6-dim} we
show that apart from the singularities
related to the groups $6.\A_7$ and $2.\HaJ$
all other six-dimensional quotient singularities are
not exceptional. In
Section~\ref{section:6} we prove the exceptionality of the
singularities related to the groups $6.\A_7$ and $2.\HaJ$ thus
completing the proof of Theorem~\ref{theorem:main}. Finally, in
Section~\ref{section:conclusion} we prove
Theorem~\ref{theorem:dim-7}. In Appendix~\ref{section:alpha} we
introduce a new invariant of a Kawamata log terminal singularity
based on the classical $\alpha$-invariant of Tian.

Throughout the paper we use usual notation for cyclic, dihedral, symmetric
and alternating groups, as well as for standard algebraic groups.
For a group $\Gamma$ we denote by $k.\Gamma$ a (non-trivial)
central extension of $\Gamma$ by the central subgroup $\Z_k$ (this
might be non-unique).

Many of the computations with the characters of large
finite groups we need
are too complicated to make by hand (namely, those mentioned in
the proofs of
Theorem~\ref{theorem:dim-6-invariants},~\ref{theorem:Hall-Janko}
and~\ref{theorem:A7} and in
Remark~\ref{remark:HJ-semiinvariants}).
In such cases we used the Magma software~\cite{Magma}.

\medskip

The authors would like to thank G.\,Robinson for numerous useful
explanations and comments, and T.\,Dokchitser and A.\,Khoroshkin for
computational support.

\section{Preliminaries}%
\label{section:preliminaries}

Let $X$ be a~variety with at most Kawamata log terminal
singularities (see \cite[Definition~3.5]{Ko97}), let $B_{X}$ be an
effective $\mathbb{Q}$-divisor on the~variety $X$~such that
$(X,B_{X})$ is log canonical. Then
$$
B_{X}=\sum_{i=1}^{r}a_{i}B_{i},
$$
where $a_{i}\in\mathbb{Q}_{\geqslant 0}$, and $B_{i}$ is a~prime
Weil divisor on the~variety $X$.

Let $\pi\colon\bar{X}\to X$ be a~birational morphism such that
$\bar{X}$ is smooth. Then
$$
K_{\bar{X}}+\sum_{i=1}^{r}a_{i}\bar{B}_{i}\sim_{\mathbb{Q}}\pi^{*}\Big(K_{X}+B_{X}\Big)+\sum_{i=1}^{m}d_{i}E_{i},
$$
where $\bar{B}_{i}$ is the~proper transforms of the~divisor
$B_{i}$ on the~variety $\bar{X}$, and $E_{i}$ is an exceptional
divisor of the~morphism $\pi$, and $d_{i}$ is a~rational number.
We may assume that
$$
\left(\bigcup_{i=1}^{r}\bar{B}_{i}\right)\bigcup\left(\bigcup_{i=1}^{m}E_{i}\right)
$$
is a~divisor with simple normal crossing. Put
$$
\mathcal{I}\Big(X, B_{X}\Big)=\pi_{*}\mathcal{O}_{\bar{X}}\Bigg(\sum_{i=1}^{m}\lceil d_{i}\rceil E_{i}-\sum_{i=1}^{r}\lfloor a_{i}\rfloor B_{i}\Bigg).%
$$

\begin{theorem}[{\cite[Theorem~9.4.8]{La04}}]
\label{theorem:Shokurov-vanishing} Let $H$ be a~nef and big
$\mathbb{Q}$-divisor on $X$ such that
$$
K_{X}+B_{X}+H\equiv D,
$$
where $D$ is a~Cartier divisor on the~variety $X$. Then
$H^{i}(\mathcal{I}(X, B_{X})\otimes D)=0$ for every $i\geqslant
1$.
\end{theorem}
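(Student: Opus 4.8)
The plan is to reduce the statement to the Kawamata--Viehweg vanishing theorem on the log resolution $\pi\colon\bar X\to X$, following the classical argument for Nadel vanishing. Write $N=\sum_{i=1}^{m}\lceil d_i\rceil E_i-\sum_{i=1}^{r}\lfloor a_i\rfloor\bar B_i$, so that by definition $\mathcal I(X,B_X)=\pi_*\mathcal O_{\bar X}(N)$. Since $D$ is Cartier, the~sheaf $\mathcal O_X(D)$ is invertible, and the~projection formula gives $\mathcal I(X,B_X)\otimes D\cong\pi_*\mathcal O_{\bar X}(N+\pi^*D)$ together with $R^j\pi_*\mathcal O_{\bar X}(N+\pi^*D)\cong R^j\pi_*\mathcal O_{\bar X}(N)\otimes\mathcal O_X(D)$ for all $j$. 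The~first thing I would record is the~local vanishing $R^j\pi_*\mathcal O_{\bar X}(N)=0$ for $j\geqslant 1$, which is the~relative form of Kawamata--Viehweg vanishing applied to the~birational morphism $\pi$ (equivalently, relative Grauert--Riemenschneider vanishing); by the~projection formula this upgrades to $R^j\pi_*\mathcal O_{\bar X}(N+\pi^*D)=0$ for $j\geqslant 1$. Then the~Leray spectral sequence for $\pi$ collapses to isomorphisms
$$
H^i\big(X,\mathcal I(X,B_X)\otimes D\big)\cong H^i\big(\bar X,\mathcal O_{\bar X}(N+\pi^*D)\big)
$$
for every $i$, so it suffices to prove the~vanishing of the~right-hand side.

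The~key computation is to rewrite $N+\pi^*D$ on $\bar X$ in a~form to which global Kawamata--Viehweg vanishing applies. From $K_X+B_X+H\equiv D$ I would pull back to obtain $\pi^*D\equiv\pi^*(K_X+B_X)+\pi^*H$, and from the~definition of the~$d_i$ I have $\pi^*(K_X+B_X)\sim_{\mathbb Q}K_{\bar X}+\sum a_i\bar B_i-\sum d_iE_i$. Substituting and collecting terms yields
$$
N+\pi^*D-K_{\bar X}\equiv\pi^*H+\sum_{i}\big(\lceil d_i\rceil-d_i\big)E_i+\sum_{i}\big(a_i-\lfloor a_i\rfloor\big)\bar B_i.
$$
Write $\Delta$ for the~last two sums. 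Each of its coefficients lies in $[0,1)$, so $\Delta$ is effective with $\lfloor\Delta\rfloor=0$, and its support is contained in the~simple normal crossing divisor $(\bigcup\bar B_i)\cup(\bigcup E_i)$. Moreover $\pi^*H$ is nef and big, being the~pullback of a~nef and big $\mathbb Q$-divisor under a~proper birational morphism. Hence $N+\pi^*D\equiv K_{\bar X}+\pi^*H+\Delta$ is an~integral divisor numerically equivalent to $K_{\bar X}$ plus a~nef and big $\mathbb Q$-divisor plus a~boundary with simple normal crossing support and zero integral part.

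With this presentation the~global Kawamata--Viehweg vanishing theorem gives $H^i(\bar X,\mathcal O_{\bar X}(N+\pi^*D))=0$ for all $i\geqslant 1$, and combining with the~isomorphism above completes the~proof. The~only real content is the~bookkeeping in the~displayed identity, where one must keep straight the~coefficients $a_i$ of the~proper transforms, the~discrepancies $d_i$ of the~exceptional divisors, and the~two roundings, and then check that the~fractional remainder $\Delta$ is genuinely a~boundary supported on a~simple normal crossing divisor; this is exactly what makes Kawamata--Viehweg applicable, and it uses only that $(X,B_X)$ is log canonical (so that $d_i\geqslant-1$). The~one point requiring a~little care is the~local vanishing $R^j\pi_*\mathcal O_{\bar X}(N)=0$: since $\pi$ is birational, $\pi^*H$ is $\pi$-numerically trivial and plays no role there, so this step is the~relative Kawamata--Viehweg vanishing for the~resolution and is independent of the~positivity hypothesis on $H$. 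I expect the~coefficient computation and the~verification of the~normal-crossing boundary condition to be the~main, though entirely routine, obstacle.
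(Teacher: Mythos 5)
Your proof is correct and is essentially the standard argument: the paper itself gives no proof of this statement but simply cites it as Nadel vanishing from Lazarsfeld's book (\cite[Theorem~9.4.8]{La04}), and the proof there proceeds exactly as you do --- local (relative Kawamata--Viehweg) vanishing $R^j\pi_*\mathcal{O}_{\bar X}(N)=0$, the projection formula and Leray to descend to $X$, and global Kawamata--Viehweg on $\bar X$ applied to $N+\pi^*D\equiv K_{\bar X}+\pi^*H+\Delta$ with $\Delta$ the fractional SNC boundary. The only slip is your parenthetical claim that the boundary condition on $\Delta$ uses log canonicity of $(X,B_X)$: the coefficients $\lceil d_i\rceil-d_i$ and $a_i-\lfloor a_i\rfloor$ lie in $[0,1)$ automatically, so no hypothesis on the $d_i$ is needed there --- but this does not affect the validity of the argument.
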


Let $\mathcal{L}(X, B_{X})$ be a~subscheme that corresponds to
the~ideal sheaf $\mathcal{I}(X, B_{X})$. Put
$$
\mathrm{LCS}\Big(X, B_{X}\Big)=\mathrm{Supp}\Bigg(\mathcal{L}\Big(X, B_{X}\Big)\Bigg).%
$$

The~subscheme $\mathcal{L}(X,B_{X})$ is reduced, because
$(X,B_{X})$ is log canonical. Note that
\begin{itemize}
\item $\mathcal{I}(X, B_{X})$ is known as the~multiplier ideal sheaf (see \cite[Section~9.2]{La04}),%
\item $\mathcal{L}(X, B_{X})$ is known as the~log canonical singularities subscheme (see \cite[Definition~2.5]{ChSh08c}),%
\item $\mathrm{LCS}(X,B_{X})$ is known as the~locus of log canonical singularities (see \cite[Definition~3.14]{Sho93}).%
\end{itemize}

Let $Z$ be a~center of log canonical singularities of the~log pair
$(X, B_{X})$ (see \cite[Definition~1.3]{Kaw97}), and let
$\mathbb{LCS}(X, B_{X})$ be the~set of all centers of log
canonical singularities of the~log pair $(X, B_{X})$.

\begin{lemma}[{\cite[Proposition~1.5]{Kaw97}}]
\label{lemma:centers} Let $Z^{\prime}$ be an~element of the~set
$\mathbb{LCS}(X, B_{X})$ such that
$$
\varnothing\ne Z\cap Z^{\prime}=\sum_{i=1}^{k}Z_{i},
$$
where $Z_{i}\subsetneq Z$ is an irreducible subvariety. Then
$Z_{i}\in\mathbb{LCS}(X, B_{X})$ for every $i\in\{1,\ldots,k\}$.
\end{lemma}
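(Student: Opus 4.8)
The plan is to exhibit, for each component $Z_i$, a divisor of log discrepancy zero over $X$ whose center is exactly $Z_i$; such a divisor certifies that $Z_i\in\mathbb{LCS}(X,B_X)$. I fix a common log resolution $\pi\colon\bar X\to X$ as in the preliminaries, and recall that a subvariety is an element of $\mathbb{LCS}(X,B_X)$ precisely when it is the $\pi$-image of a prime divisor $\Gamma$ on $\bar X$ of log discrepancy zero with respect to $(X,B_X)$, that is, a component appearing with coefficient $-1$ in $\sum d_j E_j-\sum a_j\bar B_j$; such a $\Gamma$ is a \emph{log canonical place}. Thus $Z$ and $Z'$ are the images $\pi(E)$ and $\pi(E')$ of two log canonical places $E,E'$ on $\bar X$. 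The computational core is the following: if two components $E,E'$ of the simple normal crossing divisor are log canonical places meeting transversally along a smooth codimension-two stratum $G$, then blowing up $G$ yields an exceptional divisor $F$ that is again a log canonical place. Indeed, blowing up a codimension-two center contributes discrepancy $+1$, while the pullbacks of $E$ and $E'$ each contain $F$ with multiplicity one, so the coefficient of $F$ is $(-1)+(-1)+1=-1$; moreover no coefficient below $-1$ is created, so log canonicity is preserved. Hence $\pi(F)=\pi(G)$ is again an element of $\mathbb{LCS}(X,B_X)$, contained in $\pi(E)\cap\pi(E')$.

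The obstruction is that $E$ and $E'$ need not meet over $Z_i$, so one cannot simply blow up $E\cap E'$. To circumvent this I would localize at the generic point $\eta$ of $Z_i$ and invoke the Shokurov--Koll\'ar connectedness lemma, which asserts that the union of the log canonical places meets each fibre of $\pi$ in a connected set. Since $\eta\in Z=\pi(E)$ and $\eta\in Z'=\pi(E')$, both $E$ and $E'$ meet $\pi^{-1}(\eta)$, so they are linked by a chain of log canonical places in which consecutive members intersect over $\eta$. Applying the blow-up computation to a consecutive pair produces a log canonical place whose center passes through $\eta$.

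It remains to guarantee that the center so produced lands inside $Z\cap Z'$, and is therefore contained in $Z_i$. The point is that the connecting chain could in principle detour through log canonical centers larger than $Z$ or $Z'$, whose pairwise intersections are bigger than $Z_i$. To exclude this I would first replace $B_X$ by a tie-breaking perturbation that leaves $E$ and $E'$ as log canonical places but removes all other centers meeting a neighbourhood of $\eta$, so that every member of the chain has center $Z$, $Z'$, or a subvariety of $Z_i$; then a consecutive pair with centers $Z$ and $Z'$ has intersection mapping into $Z\cap Z'$, which localizes to $Z_i$ near $\eta$. The resulting log canonical place then has an irreducible center $W$ with $\eta\in W\subseteq Z_i$, and since $W$ is closed and contains the generic point of $Z_i$ we conclude $W=Z_i$, whence $Z_i\in\mathbb{LCS}(X,B_X)$. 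Running the argument over each component gives the claim for all $i$.

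I expect the main obstacle to be precisely this last step: carrying out the tie-breaking perturbation so that it isolates $Z$ and $Z'$ near $\eta$ while ensuring that the log canonical places it produces remain log canonical places of the \emph{original} pair $(X,B_X)$, rather than only of the perturbed one. This requires choosing the perturbing divisor $\mathbb{Q}$-linearly so as to keep the relevant discrepancies pinned at $-1$, which is the delicate bookkeeping at the heart of Kawamata's method.
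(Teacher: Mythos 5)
Your strategy is the right one --- it is in fact the strategy of Kawamata's own proof of \cite[Proposition~1.5]{Kaw97}, which is all the paper offers for this lemma (it is quoted with a citation and no proof). The membership criterion via log canonical places, the coefficient computation $(-1)+(-1)+1=-1$ for the blow-up of a stratum where two places meet, and the appeal to Shokurov--Koll\'ar connectedness over the generic point $\eta$ of $Z_i$ are all correct. But the argument is genuinely incomplete at exactly the place you flag: the tie-breaking perturbation is never carried out, and it is the real content of the proof. As you yourself observe, the chain argument applied to a perturbed pair only certifies $Z_i$ as a log canonical center of the \emph{perturbed} pair, which is strictly weaker than the claim; and your proposed remedy --- ``keep the relevant discrepancies pinned at $-1$'' for the original pair, and ``remove all other centers'' near $\eta$ --- is not what one can (or needs to) arrange.

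Here is how the gap is actually closed, so you can see what is missing. We may assume neither of $Z$, $Z'$ contains the other (otherwise the claim is trivial). Choose general effective divisors $H_{1}\supseteq Z$ and $H_{2}\supseteq Z'$ whose supports contain no log canonical center of $(X,B_{X})$ except those contained in $Z$ (respectively, in $Z'$), and fix a log resolution $\pi\colon\bar{X}\to X$ of $(X,B_{X}+H_{1}+H_{2})$ carrying places $E$, $E'$ with centers $Z$, $Z'$. Put $B^{*}=(1-\varepsilon)B_{X}+\delta_{1}H_{1}+\delta_{2}H_{2}$, where $\delta_{1},\delta_{2}$ are determined by requiring that the discrepancies of $E$ and $E'$ with respect to $(X,B^{*})$ stay equal to $-1$; this is solvable because $\mathrm{mult}_{E}\pi^{*}H_{1}>0$ while $\mathrm{mult}_{E}\pi^{*}H_{2}=0$ by generality, and symmetrically for $E'$. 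The decisive point --- which answers your worry about the original versus the perturbed pair --- is finiteness rather than pinning: $\pi$ is also a log resolution of $(X,B^{*})$, the discrepancy of each of the finitely many relevant divisors $F$ on $\bar{X}$ is an affine function of $(\varepsilon,\delta_{1},\delta_{2})$, and hence for $0<\varepsilon\ll 1$ every $F$ with $a_{F}(X,B^{*})\leqslant -1$ automatically satisfies $a_{F}(X,B_{X})=-1$, i.e.\ is a log canonical place of the \emph{original} pair. Moreover any such $F$ whose center contains $\eta$ has center contained in $Z$ or in $Z'$, by the choice of $H_{1},H_{2}$. Connectedness for $(X,B^{*})$ over $\eta$ (which needs no log canonicity of the perturbed pair) then produces two such places, one with center inside $Z$ and one with center inside $Z'$, meeting over $\eta$; blowing up a component of their intersection meeting $\pi^{-1}(\eta)$ gives --- by your computation, and because both places belong to $(X,B_{X})$ --- a log canonical place of the original pair whose center contains $\eta$ and lies in $Z\cap Z'$, hence equals $Z_i$. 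Without this finiteness argument (or an equivalent device) the proof does not go through, so the proposal as written has a genuine gap, albeit one located precisely where you predicted.
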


Suppose that $Z$ is a~minimal center in $\mathbb{LCS}(X, B_{X})$
(see \cite{Kaw97}, \cite{Kaw98}).

\begin{theorem}[{\cite[Theorem~1]{Kaw98}}]
\label{theorem:Kawamata}
The~variety $Z$ is normal and has at most rational singularities.
For every ample
$\mathbb{Q}$-Cartier $\mathbb{Q}$-divisor $\Delta$ on $X$
there exists an~effective $\mathbb{Q}$-divisor $B_{Z}$ on
the~variety~$Z$ such that
$$
\Big(K_{X}+B_{X}+\Delta\Big)\Big\vert_{Z}\sim_{\mathbb{Q}} K_{Z}+B_{Z},%
$$
and $(Z,B_{Z})$ has Kawamata log terminal singularities.
\end{theorem}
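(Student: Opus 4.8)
The plan is to follow Kawamata's two-step strategy: first establish that the minimal center $Z$ is normal with at most rational singularities, and then construct the boundary $B_{Z}$ through a canonical-bundle-type formula whose \emph{moduli part} is controlled by a Hodge-theoretic semipositivity theorem.

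As a preliminary reduction I would use Lemma~\ref{lemma:centers} to simplify the configuration of centers near $Z$. After replacing $B_{X}$ by $B_{X}+\varepsilon H$ for a small general ample $\mathbb{Q}$-divisor $H$ and perturbing, one can arrange that through a general point of $Z$ the center $Z$ is the \emph{unique} element of $\mathbb{LCS}(X,B_{X})$, and that on a log resolution $\pi\colon\bar{X}\to X$ there is a single divisor $E_{0}$ of discrepancy $-1$ dominating $Z$. This is where the minimality of $Z$ is really used. For normality and rational singularities I would then appeal to the vanishing theorem (Theorem~\ref{theorem:Shokurov-vanishing}) applied to the multiplier ideal $\mathcal{I}(X,B_{X})$: near the generic point of $Z$ the subscheme $\mathcal{L}(X,B_{X})$ is exactly $Z$, so the vanishing $H^{i}(\mathcal{I}(X,B_{X})\otimes D)=0$ makes the restriction maps onto $Z$ surjective for suitable Cartier $D$, and together with the connectedness of the minimal center this forces $Z$ to be normal. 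Applying the same vanishing on a resolution of $Z$, so as to control the higher direct images of the structure sheaf, yields that the singularities of $Z$ are rational.

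The heart of the argument is the construction of $B_{Z}$. Restricting log adjunction to $E_{0}$ produces an effective boundary $B_{E_{0}}$ with $K_{E_{0}}+B_{E_{0}}\sim_{\mathbb{Q}}\pi^{*}(K_{X}+B_{X})\vert_{E_{0}}$; I would then push forward along $g:=\pi\vert_{E_{0}}\colon E_{0}\to Z$. Since the discrepancy along $E_{0}$ is exactly $-1$, the general fibre of $g$ is a log Calabi--Yau pair, and the canonical bundle formula gives $(K_{X}+B_{X})\vert_{Z}\sim_{\mathbb{Q}}K_{Z}+B_{Z}^{\mathrm{div}}+M_{Z}$, in which $B_{Z}^{\mathrm{div}}$ is an effective discriminant part with coefficients less than $1$ and $M_{Z}$ is the moduli part. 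The main obstacle, and the technical core of the whole theorem, is to show that $M_{Z}$ is nef: this is precisely Kawamata's semipositivity theorem, which rests on the variation of Hodge structure attached to the fibration $g$ and the positivity of the associated Hodge bundle. By contrast the normality and rationality step is a comparatively routine consequence of vanishing.

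Granting nefness of $M_{Z}$, I would finish by absorbing it into $\Delta$. Since $\Delta\vert_{Z}$ is ample, the divisor $M_{Z}+\Delta\vert_{Z}$ is ample, hence $\mathbb{Q}$-linearly equivalent to a sufficiently general effective divisor $\Theta$ for which the pair $(Z,B_{Z}^{\mathrm{div}}+\Theta)$ remains Kawamata log terminal. Setting $B_{Z}:=B_{Z}^{\mathrm{div}}+\Theta$ then gives $(K_{X}+B_{X}+\Delta)\vert_{Z}\sim_{\mathbb{Q}}K_{Z}+B_{Z}$ with $(Z,B_{Z})$ Kawamata log terminal, as required.
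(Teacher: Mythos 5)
The paper itself contains no proof of this statement: Theorem~\ref{theorem:Kawamata} is imported verbatim from Kawamata's subadjunction paper (\cite{Kaw98}, with the normality of minimal centers going back to \cite{Kaw97}), so the only meaningful comparison is with Kawamata's original argument --- and your sketch is, in substance, a faithful reconstruction of it: tie-breaking with a small ample perturbation so that $Z$ is the unique center with a single divisor $E_{0}$ of discrepancy $-1$ over its generic point, normality and rationality via vanishing plus connectedness, the canonical bundle formula for the fibration $g\colon E_{0}\to Z$ with discriminant and moduli parts, the Fujita--Kawamata Hodge-theoretic semipositivity for nefness of the moduli part (correctly identified as the technical core, and legitimately quotable since it is an independent prior result, not the theorem itself), and absorption of the nef moduli part into the ample $\Delta$. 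Two points are glossed over rather than wrong. First, the effectivity of the discriminant part $B_{Z}^{\mathrm{div}}$ and the bound that its coefficients are strictly less than $1$ are not automatic: the boundary induced on $E_{0}$ by log pullback can have negative coefficients along $\pi$-exceptional divisors, and both properties have to be extracted from the minimality of $Z$ (via the tie-breaking step), which Kawamata does carefully; your phrase ``effective discriminant part with coefficients less than $1$'' silently assumes this. Second, in \cite{Kaw98} the rationality of the singularities of $Z$ is obtained as a corollary of the subadjunction formula itself (a variety underlying a Kawamata log terminal pair has rational singularities), whereas you propose a separate vanishing argument on a resolution of $Z$; that route can be made to work, but it is a genuinely different (and slightly more delicate) way to get rationality. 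These are presentation-level discrepancies, not gaps in the overall strategy.
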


\begin{remark}
\label{remark:RR} In the notation and assumptions of
Theorem~\ref{theorem:Kawamata}, suppose that $K_{X}+B_{X}+\Delta\qlin D$,
where $D$ is a~Cartier divisor on $X$. Put $H=D\vert_{Z}$. Let
$\nu\colon\bar{Z}\to Z$ be a~desingularization.~Then
$$
h^{0}\Big(\mathcal{O}_{Z}\big(H\big)\Big)=\chi\Big(\mathcal{O}_{Z}\Big(H\big)\Big)=\chi\Bigg(\mathcal{O}_{\bar{Z}}\Big(\nu^{*}\big(H\big)\Big)\Bigg)%
$$
by Theorem~\ref{theorem:Shokurov-vanishing}, because $Z$ has at
most rational singularities by Theorem~\ref{theorem:Kawamata}.
\end{remark}

Let $\bar{G}\subseteq\mathrm{Aut}(X)$ be a~finite subgroup.
Suppose that  $B_{X}$ is $\bar{G}$-invariant. Then
$g(Z)\in\mathbb{LCS}(X,B_{X})$
for every $g\in\bar{G}$, and the~locus $\mathrm{LCS}(X,B_{X})$ is
$\bar{G}$-invariant. It follows from Lemma~\ref{lemma:centers}
that
$$
g\big(Z\big)\cap g^{\prime}\big(Z\big)\ne \varnothing\iff g\big(Z\big)=g^{\prime}\big(Z)%
$$
for every $g\in\bar{G}\ni g^{\prime}$, because $Z$ is a~minimal
center in $\mathbb{LCS}(X, B_{X})$.

\begin{lemma}
\label{lemma:Kawamata-Shokurov-trick} Suppose that the~divisor
$B_{X}$ is ample. Let $\epsilon$ be an~arbitrary rational number
such that $\epsilon>1$. Then there exists
an~effective~$\bar{G}$-in\-va\-riant $\mathbb{Q}$-divisor
$D$~on~the~variety~$X$~such~that
$$
\mathbb{LCS}\Big(X, D\Big)=\bigcup_{g\in\bar{G}}\Big\{g\big(Z\big)\Big\},%
$$
the~log pair $(X,D)$ is log canonical, and the~equivalence
$D\sim_{\mathbb{Q}} \epsilon(B_{X})$ holds.
\end{lemma}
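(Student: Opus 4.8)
The plan is to produce the divisor $D$ by combining the ample divisor $B_X$ with a perturbation supported near the orbit $\bigcup_{g\in\bar{G}} g(Z)$, engineered so that this orbit becomes exactly the log canonical locus. First I would note that $Z$ is a minimal center of $\mathbb{LCS}(X,B_X)$ and that, by the displayed disjointness property preceding the lemma, the translates $\{g(Z)\}_{g\in\bar{G}}$ are pairwise disjoint, so I may treat the orbit as a disjoint union of copies of $Z$. Since $\epsilon>1$, write $\epsilon = 1 + \delta$ with $\delta>0$ rational. The rough idea is to keep $B_X$ (which already makes $(X,B_X)$ log canonical with $Z$ a center) and add an effective $\bar{G}$-invariant divisor $\delta'\, A$, where $A$ is an ample $\bar{G}$-invariant $\mathbb{Q}$-divisor whose support passes through the orbit with controlled multiplicity, so as to cut the center down to the orbit while preserving log canonicity.

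Next I would make this precise by applying Theorem~\ref{theorem:Kawamata} to the minimal center $Z$: for a suitable ample $\bar{G}$-invariant $\mathbb{Q}$-Cartier divisor $\Delta$ we obtain, after restriction, an effective divisor on $Z$ exhibiting $Z$ itself as a log canonical center, which lets me bootstrap an effective boundary concentrated on the orbit. Concretely, I would choose a $\bar{G}$-invariant effective $\mathbb{Q}$-divisor $M \sim_{\mathbb{Q}} \mu\, B_X$ for a small rational $\mu>0$ whose multiplier ideal cuts out precisely $\bigcup_{g} g(Z)$, and then set
$$
D = (1-t)B_X + M + tB_X = B_X + M
$$
and rescale: the coefficient bookkeeping is arranged so that $D \sim_{\mathbb{Q}} (1+\mu)B_X$, and I choose $\mu = \delta$ so that $D \sim_{\mathbb{Q}} \epsilon B_X$. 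By $\bar{G}$-averaging any auxiliary divisor I can always assume invariance, since $\bar{G}$ is finite and $B_X$ is already $\bar{G}$-invariant. The log canonicity of $(X,D)$ and the identity $\mathbb{LCS}(X,D)=\bigcup_{g\in\bar{G}}\{g(Z)\}$ would then follow from the standard perturbation argument: adding a small multiple of an ample divisor through a minimal center tie-breaks among the various log canonical centers of $(X,B_X)$, discarding all centers except those lying over the orbit of $Z$, while the minimality of $Z$ guarantees no new smaller centers are created.

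The main obstacle I expect is the tie-breaking step: ensuring that the perturbation isolates exactly the orbit $\bigcup_g g(Z)$ and nothing more, i.e.\ that no other center of $\mathbb{LCS}(X,B_X)$ survives and that no strictly smaller center is introduced inside $Z$. This requires a careful choice of the coefficients (making $B_X$ slightly subcanonical away from $Z$ by subtracting a small $\bar{G}$-invariant multiple, then recompensating with the ample piece), and it is precisely here that $\bar{G}$-invariance of the construction must be maintained throughout the averaging. The normality of $Z$ and the disjointness of the translates, both already established, are what make the final $\mathbb{LCS}$ computation clean; the rest is routine coefficient arithmetic to force the $\mathbb{Q}$-linear equivalence $D\sim_{\mathbb{Q}}\epsilon B_X$.
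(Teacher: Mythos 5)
Your overall strategy (scale $B_X$ down, add an invariant divisor concentrated along the orbit of $Z$, then fix the $\mathbb{Q}$-linear equivalence class with an ample piece) is the right one---it is exactly Kawamata's tie-breaking argument, which is what the paper itself invokes: the paper gives no self-contained proof, but refers to the proofs of \cite[Theorem~1.10]{Kaw97}, \cite[Theorem~1]{Kaw98} and \cite[Lemma~2.8]{ChSh09}. However, your concrete construction does not implement that strategy, and as written it fails. You set
$$
D=(1-t)B_X+M+tB_X=B_X+M,
$$
so the scaling factor $(1-t)$, which your own prose identifies as essential (``making $B_X$ slightly subcanonical away from $Z$''), cancels, and the coefficient of $B_X$ in $D$ is again $1$. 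This is fatal for two reasons. First, since you require the multiplier ideal of $M$ to cut out $\bigcup_{g}g(Z)$, necessarily $Z\subset\mathrm{Supp}(M)$; hence any log canonical place $E$ of $(X,B_X)$ whose center is $Z$ (such a place exists because $Z\in\mathbb{LCS}(X,B_X)$) has positive multiplicity along the pull-back of $M$, so its discrepancy with respect to $(X,B_X+M)$ equals $-1-\mathrm{mult}_{E}(M)<-1$; that is, $(X,D)$ is \emph{not} log canonical along $Z$. Second, every center of $\mathbb{LCS}(X,B_X)$ not lying in the orbit of $Z$ remains a non-klt center of $(X,B_X+M)$, because adding the effective divisor $M$ only decreases discrepancies; so the desired equality $\mathbb{LCS}(X,D)=\bigcup_{g\in\bar{G}}\{g(Z)\}$ would fail even if log canonicity held.

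The second genuine gap is that the key object of your construction is postulated rather than produced: an effective $M\sim_{\mathbb{Q}}\mu B_X$ with $\mu$ arbitrarily small whose non-klt locus is exactly the orbit of $Z$ is, up to the harmless addition of a general invariant ample divisor, precisely the conclusion of the lemma, so ``choosing'' it assumes what is to be proved. The cited proofs construct it, and this is the step your plan must actually contain: for $m\gg 0$ the sheaf $\mathcal{I}_{Z}\otimes\mathcal{O}_X(mB_X)$ is generated by global sections; take a general member $F$ of this system, replace it by its $\bar{G}$-average so that it is invariant and contains the whole orbit, and put $D=(1-t)B_X+s(t)F+A$, where $s(t)$ is the log canonical threshold of $F$ with respect to $(X,(1-t)B_X)$ along the orbit, and $A$ is a small multiple of a general invariant very ample divisor chosen so that $D\sim_{\mathbb{Q}}\epsilon B_X$. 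For $0<t\ll 1$ the pair $(X,(1-t)B_X)$ is Kawamata log terminal (discrepancies are affine in the coefficient and $X$ is klt), the log canonical places of $(X,(1-t)B_X+s(t)F)$ are among those of $(X,B_X)$, their centers contain $Z$ by minimality of $Z$ together with Lemma~\ref{lemma:centers}, and they are contained in $\mathrm{Supp}(F)$; by the generality of $F$, no log canonical center of $(X,B_X)$ other than $Z$ (and its translates) lies in $\mathrm{Supp}(F)$, which forces the new centers to be exactly the orbit of $Z$. It is this interplay between ``small $t$'', minimality, and generality of $F$---not minimality alone---that rules out both surviving extraneous centers and newly created smaller ones. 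Finally, your appeal to Theorem~\ref{theorem:Kawamata} is misplaced: subadjunction plays no role in this construction (it is used later, in the proofs of Theorems~\ref{theorem:Hall-Janko} and~\ref{theorem:A7}), and it does not ``exhibit $Z$ as a log canonical center''---$Z$ is one by hypothesis.
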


\begin{proof}
See the~proofs of \cite[Theorem~1.10]{Kaw97},
\cite[Theorem~1]{Kaw98}, \cite[Lemma~2.8]{ChSh09}.
\end{proof}

Suppose that $X\cong\mathbb{P}^{n}$. Let $H$ be a~hyperplane in
$\mathbb{P}^{n}$. Suppose that
$$
\mathbb{LCS}\Big(X, B_{X}\Big)=\bigcup_{g\in\bar{G}}\Big\{g\big(Z\big)\Big\},%
$$
and let $Y$ be the~$\bar{G}$-orbit of the~subvariety
$Z\subset\mathbb{P}^{n}$.

\begin{lemma}
\label{lemma:degree} Put $s=n-\mathrm{dim}(Y)$ and
$$
r=\left\{\aligned
&\lceil\mu-s-1\rceil+1\ \text{if}\ \mu\in\mathbb{Z},\\
&\lceil\mu-s-1\rceil\ \text {if}\ \mu\not\in\mathbb{Z},\\
\endaligned
\right.
$$
where $\mu\in\mathbb{Q}$ such that $B_{X}\qlin \mu H$. Then
$r\geqslant 0$ and
$$
\mathrm{deg}\big(Y\big)\leqslant {s+r\choose r}.%
$$
\end{lemma}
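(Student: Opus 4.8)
The plan is to bound $\deg(Y)$ by exhibiting many independent linear conditions that hypersurfaces of degree $r$ must satisfy on $Y$, and then comparing with the dimension of the space of such hypersurfaces. The key observation is that $Y$ is the $\bar{G}$-orbit of a minimal center $Z$, so by the remark preceding Lemma~\ref{lemma:Kawamata-Shokurov-trick} the components $g(Z)$ are pairwise disjoint; thus $Y$ is a disjoint union of irreducible subvarieties each of dimension $d:=\dim(Y)=n-s$, and $\mathcal{L}(X,B_{X})$ is reduced and supported exactly on $Y$. The role of the numerical invariant $r$ is to make a certain line bundle on $Z$ have a nonzero section: I want to apply Theorem~\ref{theorem:Kawamata} to a minimal center $Z$ to produce, via adjunction, an effective divisor whose restriction I can control, and then use Remark~\ref{remark:RR} together with Theorem~\ref{theorem:Shokurov-vanishing} to compute the relevant Euler characteristic.

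First I would choose an ample $\mathbb{Q}$-Cartier divisor $\Delta$ on $X=\mathbb{P}^n$ of the form $\Delta\qlin\delta H$ with $\delta$ small and arrange that $K_X+B_X+\Delta\qlin D$ for a Cartier divisor $D\qlin tH$, where $t=\mu-n-1+\delta$ rounds up in the manner encoded by the definition of $r$; the two cases ($\mu\in\mathbb{Z}$ versus $\mu\notin\mathbb{Z}$) correspond precisely to whether $-K_{\mathbb{P}^n}=(n+1)H$ contributes an integer shift, which is exactly what the case distinction in the statement records. With this choice I get $\deg_Y(D\vert_Y)=r\cdot\deg(Y)$ on the $d$-dimensional orbit. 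By Theorem~\ref{theorem:Kawamata} the minimal center $Z$ is normal with rational singularities, and $(K_X+B_X+\Delta)\vert_Z\sim_\mathbb{Q}K_Z+B_Z$ with $(Z,B_Z)$ Kawamata log terminal; setting $H_Z=D\vert_Z$ and applying Remark~\ref{remark:RR}, I obtain $h^0(\mathcal{O}_Z(H_Z))=\chi(\mathcal{O}_{\bar Z}(\nu^*H_Z))$. The inequality $r\ge 0$ guarantees this line bundle is not forced to be trivial, and the point is that $H_Z\qlin rH\vert_Z$ is the restriction of $r$ times a hyperplane, so sections of $\mathcal{O}_Z(H_Z)$ are cut out by degree-$r$ forms on $\mathbb{P}^n$.

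Next I would combine these facts. The restriction map from degree-$r$ forms on $\mathbb{P}^n$ to $H^0(\mathcal{O}_Y(rH))$ is surjective (again by the vanishing in Theorem~\ref{theorem:Shokurov-vanishing} applied on the orbit, using that the centers are disjoint so that $H^0$ on $Y$ splits as a direct sum over the orbit), while on the other hand a standard Hilbert-polynomial estimate for a reduced $d$-dimensional variety gives $h^0(\mathcal{O}_Z(rH))\ge\binom{d+r}{r}-(\text{correction})$, and summing over the orbit forces $\deg(Y)\cdot\binom{d+r}{d}$-type growth unless $\deg(Y)$ is small. More precisely, the degree of a $d$-dimensional subvariety satisfies $\deg(Z)\le\binom{s+r}{r}$ whenever $\mathcal{O}_Z(rH)$ has a section vanishing to the appropriate order, which is the content of the bound $\deg(Y)\le\binom{s+r}{r}$ once one notes $s=n-d$ and passes from a single component to the whole orbit. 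I expect the main obstacle to be the bookkeeping in the rounding: verifying that the definition of $r$ — including the nonnegativity $r\ge 0$ and the integer-versus-noninteger dichotomy — matches exactly the threshold at which $H^0(\mathcal{O}_Z(H_Z))\ne 0$, so that the section used to bound the degree genuinely exists. The geometric input (normality, rational singularities, disjointness of the orbit) is all supplied by the earlier results, so the delicate part is purely the numerical reconciliation of $\mu$, $s$, and $r$.
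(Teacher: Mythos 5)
Your proposal has a genuine gap, and it sits exactly where the paper's proof does its real work. The paper proves the lemma by cutting with a \emph{general linear subspace} $\Pi\subset\mathbb{P}^n$ of dimension $s=n-\dim(Y)$: then $\deg(Y)=|Y\cap\Pi|$ becomes a count of points, the restricted pair $(\Pi,D)$ with $D=B_X\vert_\Pi$ satisfies $\mathrm{LCS}(\Pi,D)=Y\cap\Pi$, and since $K_\Pi+D\qlin(\mu-s-1)\Lambda$ where $\Lambda=H\cap\Pi$, the definition of $r$ makes $r\Lambda-(K_\Pi+D)$ ample, so Theorem~\ref{theorem:Shokurov-vanishing} applied \emph{on} $\Pi\cong\mathbb{P}^s$ gives a surjection $H^0(\mathcal{O}_\Pi(r\Lambda))\to H^0(\mathcal{O}_{\mathcal{L}(\Pi,D)})$. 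The target is supported on $Y\cap\Pi$, hence is nonzero and has dimension at least $\deg(Y)$; this forces both $r\geq 0$ and $\deg(Y)\leq h^0(\mathcal{O}_{\mathbb{P}^s}(r))=\binom{s+r}{r}$. You never perform this reduction to dimension zero, and without it your argument cannot close.

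Concretely, two steps of your plan fail. First, the numerics: working on $X=\mathbb{P}^n$ as you do, Nadel vanishing requires $r>\mu-(n+1)$, since $K_{\mathbb{P}^n}=-(n+1)H$; but the $r$ of the statement is the least integer exceeding $\mu-s-1$. Your claim that the integer/non-integer dichotomy ``corresponds to whether $-K_{\mathbb{P}^n}=(n+1)H$ contributes an integer shift'' is off: the $s+1$ in the definition of $r$ comes from $-K_\Pi=(s+1)\Lambda$ on the linear section, and this mismatch is a symptom of applying the vanishing theorem on the wrong space. Second, and more seriously, your concluding step is circular. Surjectivity of $H^0(\mathcal{O}_{\mathbb{P}^n}(r))\to H^0(\mathcal{O}_Y(rH))$ only gives the \emph{upper} bound $h^0(\mathcal{O}_Y(rH))\leq\binom{n+r}{r}$; to bound $\deg(Y)$ you would need a \emph{lower} bound for $h^0(\mathcal{O}_Y(rH))$ in terms of $\deg(Y)$, and no such bound holds for a positive-dimensional variety with $r$ fixed (a curve of large degree and large genus already defeats it), so your ``standard Hilbert-polynomial estimate'' with an unspecified correction term does not exist in the form you need. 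Your final assertion --- that $\deg(Z)\leq\binom{s+r}{r}$ whenever $\mathcal{O}_Z(rH)$ has a section vanishing to the appropriate order --- is not a known fact one can invoke; it is a restatement of the lemma itself. Finally, the appeal to Theorem~\ref{theorem:Kawamata} and Remark~\ref{remark:RR} is beside the point here: subadjunction plays no role in this lemma, and is only needed later, in the proofs of Theorems~\ref{theorem:Hall-Janko} and~\ref{theorem:A7}.
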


\begin{proof}
Let $\Pi\subset\mathbb{P}^{n}$ be a~general linear subspace of
dimension $s$. Put
$$
D=B_{X}\Big\vert_{\Pi}
$$
and $\Lambda=H\cap \Pi$. Then $\mathrm{deg}(Y)=|Y\cap \Pi|$ and
$\mathrm{LCS}(\Pi,D)=Y\cap\Pi$. One has
$$
K_{\Pi}+D\qlin \Big(\mu-s-1\Big)\Lambda.
$$

It follows from Theorem~\ref{theorem:Shokurov-vanishing} that
there is an exact sequence of cohomology groups
$$
0\longrightarrow H^{0}\Bigg(\mathcal{O}_{\Pi}\big(r\Lambda\big)\otimes\mathcal{I}\Big(\Pi,D\Big)\Bigg)\longrightarrow H^{0}\Big(\mathcal{O}_{\Pi}\big(r\Lambda\big)\Big)\longrightarrow H^{0}\Big(\mathcal{O}_{\mathcal{L}(\Pi,D)}\Big)\longrightarrow 0,%
$$
and $\mathrm{Supp}(\mathcal{L}(\Pi,D))=\mathrm{LCS}(\Pi,D)=Y\cap
\Pi\ne\varnothing$. Therefore, we see that $r\geqslant 0$ and
$$
\mathrm{deg}\big(Y\big)=\big|Y\cap \Pi\big|\leqslant h^{0}\Big(\mathcal{O}_{\mathcal{L}(\Pi,D)}\Big)\leqslant h^{0}\Big(\mathcal{O}_{\Pi}\big(r\Lambda\big)\Big)=h^{0}\Big(\mathcal{O}_{\mathbb{P}^{s}}\big(r\big)\Big)={s+r\choose r},%
$$
which completes the~proof.
\end{proof}

Let $G$ be a~finite subgroup in $\GL_{n+1}(\mathbb{C})$ such that
$\bar{G}=\phi(G)$, where
$\phi\colon\GL_{n+1}(\mathbb{C})\to\mathrm{Aut}(\mathbb{P}^{n})\cong
\PGL_{n+1}(\mathbb{C})$ is~the~natural
projection.

\begin{lemma}
\label{lemma:invariant-quadric} If $G$ is conjugate to a~subgroup
in $\GL_{n+1}(\mathbb{R})$, then $G$ has
an~invariant~of~degree~$2$.
\end{lemma}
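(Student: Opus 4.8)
The plan is to reduce to a real group and then average a positive definite quadratic form. Since a linear change of coordinates carries degree-$2$ invariants to degree-$2$ invariants, the existence of a degree-$2$ invariant is unchanged under conjugation in $\GL_{n+1}(\C)$; so I would first fix $A\in\GL_{n+1}(\C)$ with $H:=AGA^{-1}\subset\GL_{n+1}(\R)$, build the invariant for $H$, and then pull it back to $G$.

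For $H\subset\GL_{n+1}(\R)$, start from the standard inner product $\langle\,,\rangle$ on $\R^{n+1}$ and average:
$$
B(v,w)=\frac{1}{|H|}\sum_{h\in H}\big\langle hv,hw\big\rangle.
$$
Reindexing the sum shows $B(h'v,h'w)=B(v,w)$ for all $h'\in H$, so the quadratic form $Q(v)=B(v,v)\in\R[y_0,\dots,y_n]$ is a genuine $H$-invariant homogeneous polynomial of degree $2$.

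The crucial point, and the only place where the hypothesis is really used, is that $Q$ is not identically zero; this is precisely why one passes to $\R$ first. Over $\C$ a direct averaging could collapse to zero, since a finite complex linear group need not admit any invariant quadric (averaging is projection onto the invariant subspace, which may be trivial). Over $\R$, each summand $(v,w)\mapsto\langle hv,hw\rangle$ is positive definite and the positive definite forms make up a convex cone, so $B$ is again positive definite; in particular $Q\neq0$.

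It remains to transport the invariant: set $P(x)=Q(Ax)$. For $g\in G$, put $h=AgA^{-1}\in H$; then, with $G$ acting on polynomials by $(g\cdot P)(x)=P(g^{-1}x)$, we get $(g\cdot P)(x)=Q(Ag^{-1}x)=Q(h^{-1}Ax)=Q(Ax)=P(x)$. Hence $P$ is a $G$-invariant of degree $2$. I anticipate no genuine obstacle; the single subtlety is the non-vanishing of $Q$, which the passage to $\R$ together with positive-definiteness settles.
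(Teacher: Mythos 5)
Your proof is correct and is essentially the paper's argument: the paper also produces a real positive definite invariant inner product for the conjugated real group (which is exactly what your averaging construction yields) and views it as a non-trivial $G$-invariant element of $\mathrm{Sym}^2(\C^{n+1})$. Your write-up merely makes explicit the averaging, the positive-definiteness check guaranteeing non-vanishing, and the transport of the invariant back through the conjugation.
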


\begin{proof}
If $G$ is conjugate to a~subgroup in $\GL_{n+1}(\mathbb{R})$, then
there exists a (real positive definite) $G$-invariant inner
product, which gives a non-trivial $G$-invariant element
in $\mathrm{Sym}^2(\C^{n+1})$.
\end{proof}

\begin{lemma}
\label{lemma:normal-extension} Suppose that there exists a~normal
subgroup $F\subset G$ such that $G/F$~is~abelian, and $F$ has
an~invariant of degree $d$. Then $G$ has a~semi-invariant of
degree $d$.
\end{lemma}

\begin{proof}
Let $V$ be a~space of invariants of the~group $F$ of degree $d$.
Then the~group $G/F$ naturally acts on the~space $V$. Since
the~group $G/F$ is abelian, it has a~one-dimensional invariant
subspace, which gives a~required semi-invariant of the~subgroup
$G$.
\end{proof}

Let $G_{1}\subset\SL_2(\mathbb{C})$
and~\mbox{$G_{2}\subset\SL_l(\mathbb{C})$} be finite subgroups, let
$\mathbb{M}$ be the~vector space of~$2\times l$-matrices with entries
in~$\mathbb{C}$. For every $(g_{1},g_{2})\in G_{1}\times G_{2}$ and
every $M\in\mathbb{M}$, put
$$
\Big(g_{1},g_{2}\Big)\big(M\big)=g_{1}Mg_{2}^{-1}\in\mathbb{M}\cong\mathbb{C}^{2l},%
$$
which induces a~homomorphism $\varphi\colon G_{1}\times
G_{2}\to\SL_{2l}(\mathbb{C})$. Note that
$|\mathrm{ker}(\varphi)|\leqslant 2$ if $n$ is even, and~$\varphi$
is a~monomorphism if $n$ is odd. Suppose  that $n=2l-1\geqslant
3$.

\begin{lemma}[{\cite[Lemma~3.24]{ChSh09}}]
\label{lemma:Segre} Suppose that $G=\varphi(G_{1}\times G_{2})$.
Then $\mathrm{lct}(\mathbb{P}^{n}, \bar{G})<1$.
\end{lemma}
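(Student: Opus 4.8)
The plan is to exhibit a single $\bar{G}$-invariant effective $\mathbb{Q}$-divisor $D\sim_{\mathbb{Q}}-K_{\mathbb{P}^{n}}$ whose own log canonical threshold is strictly less than $1$. Since $\mathrm{lct}(\mathbb{P}^{n},\bar{G})$ is the infimum over all such $D$ of their individual thresholds, producing one bad $D$ forces $\mathrm{lct}(\mathbb{P}^{n},\bar{G})<1$.

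First I would isolate the geometry. Writing a point of $\mathbb{M}$ as a $2\times l$ matrix and projectivizing, $\mathbb{P}(\mathbb{M})=\mathbb{P}^{2l-1}=\mathbb{P}^{n}$ contains the locus of matrices of rank $\leqslant 1$, which is the Segre variety $\Sigma\cong\mathbb{P}^{1}\times\mathbb{P}^{l-1}$ of dimension $l$, hence of codimension $l-1$. Because $(g_{1},g_{2})$ acts by $M\mapsto g_{1}Mg_{2}^{-1}$ and this preserves the rank of $M$, the subvariety $\Sigma$ is $\bar{G}$-invariant. The ideal of $\Sigma$ is generated by the $2\times 2$ minors $m_{ij}=x_{i}y_{j}-x_{j}y_{i}$, each a quadric containing $\Sigma$ with multiplicity exactly $1$ (its singular locus is a linear subspace not containing $\Sigma$). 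Note also that $G_{1}\subset\SL_{2}(\mathbb{C})$ acts trivially on the span of the minors, since $\wedge^{2}\mathbb{C}^{2}$ is the trivial $\SL_{2}$-representation.

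Next I would build $D$. Fix one minor $q=m_{12}$ and let $R=\sum_{R^{\prime}\in\mathcal{O}}R^{\prime}$ be the sum of the finite $\bar{G}$-orbit $\mathcal{O}$ of the quadric $\{q=0\}$; this is a reduced $\bar{G}$-invariant divisor whose $N:=|\mathcal{O}|$ components are projectively equivalent quadrics, each containing $\Sigma$ with multiplicity $1$, so $\mathrm{deg}(R)=2N$ and $\mathrm{mult}_{\Sigma}(R)=N$. Setting $D=\tfrac{l}{N}R$ yields a $\bar{G}$-invariant effective $\mathbb{Q}$-divisor of degree $2l$, hence $D\sim_{\mathbb{Q}}-K_{\mathbb{P}^{n}}$, with $\mathrm{mult}_{\Sigma}(D)=l$. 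The key point is that $l$ exceeds the codimension $l-1$ of $\Sigma$. Blowing up the smooth center $\Sigma$, the exceptional divisor $E$ has discrepancy $(l-1)-1-\mathrm{mult}_{\Sigma}(D)=(l-2)-l=-2<-1$ for the pair $(\mathbb{P}^{n},D)$; equivalently $\mathrm{lct}(\mathbb{P}^{n},D)\leqslant\tfrac{l-1}{l}<1$, and therefore $\mathrm{lct}(\mathbb{P}^{n},\bar{G})<1$.

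The routine verifications are that $\Sigma$ is the asserted Segre variety of codimension $l-1$ and that each minor meets $\Sigma$ with multiplicity one; the only structural remark is that $\mathcal{O}$ is genuinely $\bar{G}$-invariant, which is automatic since it is the image of a $\bar{G}$-orbit, while the factor $\tfrac{l}{N}$ is chosen so that the degree matches $-K_{\mathbb{P}^{n}}$. I expect the only real content to be the numerical coincidence $\mathrm{mult}_{\Sigma}(D)=l>l-1=\operatorname{codim}\Sigma$: it works precisely because the defining minors have degree $2$ while $-K_{\mathbb{P}^{2l-1}}$ has degree $2l$, leaving exactly enough room to overshoot the codimension by one. (As a sanity check, for $l=2$ one has $N=1$, $R=\Sigma$ is the unique quadric, $D=2\Sigma$, and $\mathrm{lct}(\mathbb{P}^{3},D)=\tfrac12$, consistent with the bound $\tfrac{l-1}{l}$.)
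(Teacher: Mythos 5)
Your proof is correct and follows essentially the same route as the paper: both hinge on the Segre variety $Y\cong\mathbb{P}^1\times\mathbb{P}^{l-1}\subset\mathbb{P}^{2l-1}$ and on an invariant divisor supported on quadrics through $Y$, whose total coefficient $l$ exceeds the codimension $l-1$. The only difference is packaging: the paper takes the full $\bar{G}$-invariant linear system $\mathcal{Q}$ of quadrics through $Y$, observes that $(\mathbb{P}^{n},l\mathcal{Q})$ is not log canonical along $Y$, and cites Koll\'ar's Theorem~4.8 to conclude, whereas you build the invariant $\mathbb{Q}$-divisor explicitly as a scaled orbit-sum of a single $2\times 2$ minor and verify non-log-canonicity by a direct blow-up discrepancy computation.
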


\begin{proof}
Put $s=l-1$. Let
$\psi\colon\mathbb{P}^{1}\times\mathbb{P}^{s}\to\mathbb{P}^{n}$ be
the~Segre embedding. Put
$Y=\psi(\mathbb{P}^{1}\times\mathbb{P}^{s})$ and let $\mathcal{Q}$
be the~linear system consisting of all quadric hypersurfaces in
$\mathbb{P}^{n}$ that pass through the~subvariety $Y$. Then
$\mathcal{Q}$ is a~non-empty $\bar{G}$-invariant linear system.
The~log pair $(\mathbb{P}^{n}, l\mathcal{Q})$ is not log-canonical
along $Y$. Now it  follows from \cite[Theorem~4.8]{Ko97} that
$\mathrm{lct}(\mathbb{P}^{n}, \bar{G})<1$.
\end{proof}

\section{Six-dimensional case}
\label{section:6-dim}

Let $G$ be a~finite subgroup in $\SL_{n+1}(\mathbb{C})$. Put
$V=\mathbb{C}^{n+1}$.

\begin{definition}[{see \cite[\S 1]{MaWo93}}]
\label{definition:quasi-primitive} The~subgroup $G$ is
quasiprimitive if the~following conditions hold:
\begin{itemize}
\item the~vector space $V$ is an irreducible representation of the~group $G$,%
\item for any nontrivial normal subgroup $N\subseteq G$ one has
$V\cong W^{{}\oplus r}$ as a~representation of $N$, where $W$ is
an irreducible representation of $N$, and $r\ge 1$.
\end{itemize}
\end{definition}

Suppose that $n=5$. Let
$\phi\colon\SL_6(\mathbb{C})\to\mathrm{Aut}(\mathbb{P}^{5})$ be
the~natural projection. Put $\bar{G}=\phi(G)$. We say that
the~subgroup $G$ is the~lift of the~subgroup
$\bar{G}\subset\mathrm{Aut}(\mathbb{P}^{5})\cong\PGL_6(\mathbb{C})$
to~$\SL_6(\mathbb{C})$.

\begin{theorem}[{\cite[\S 3]{Li71}}]
\label{theorem:Feit} Suppose that $G$ is
quasiprimitive.~Then~there~exists a~lift of
the~sub\-group~$\bar{G}\subset\mathrm{Aut}(\mathbb{P}^{5})$ to
$\SL_6(\C)$ that is contained in the~following list:
\begin{itemize}
\item[(I)]
\begin{itemize}
\item[(i)] a~subgroup of the group $\SL_6(\mathbb{C})$ that satisfies the~hypotheses of Lemma~\ref{lemma:Segre},%
\item[(ii)] a~certain subgroup of a~subgroup described in I(i) (see~\cite[\S 3]{Li71} for details),%
\end{itemize}
\item[(II)] $\SL_2(\F_5)$, %
\item[(III)] $2.\SS_5$, %
\item[(IV)]
\begin{itemize}
\item[(i)] $3.\A_6$, %
\item[(ii)] an~extension of the~subgroup described in IV(i) by an automorphism of order $2$,%
\end{itemize}
\item[(V)] $6.\A_6$,%
\item[(VI)] $\A_7$ or $\SS_7$,%
\item[(VII)] $3.\A_7$,%
\item[(VIII)] $6.\A_7$,%
\item[(IX)]%
\begin{itemize}
\item[(i)] $\PSL_2(\F_7)$, %
\item[(ii)] $\PGL_2(\F_7)$,%
\end{itemize}
\item[(X)]
\begin{itemize}
\item[(i)] $\SL_2(\F_7)$, %
\item[(ii)] an~extension of the~subgroup described in X(i) by an automorphism of order $2$,%
\end{itemize}
\item[(XI)] $\SL_2(\F_{11})$,%
\item[(XII)] $\SL_2(\F_{13})$,%
\item[(XIII)]
\begin{itemize}
\item[(i)] $\PSp_4(\F_3)$,%
\item[(ii)] an~extension of the~subgroup described in XIII(i) by an automorphism of order $2$,%
\end{itemize}
\item[(XIV)]
\begin{itemize}
\item[(i)] $\SU_3(\F_3)$, %
\item[(ii)] an~extension of the~subgroup described in XIV(i) by an automorphism of order $2$,%
\end{itemize}
\item[(XV)]
\begin{itemize}
\item[(i)] $6.\PSU_4(\F_3)$, %
\item[(ii)] an~extension of the~subgroup described in XV(i) by an automorphism of order $2$,%
\end{itemize}
\item[(XVI)] $2.\HaJ$, where $\HaJ$ is the~Hall--Janko group (see \cite{Li68}, \cite{Li70}),%
\item[(XVII)]
\begin{itemize}
\item[(i)] $6.\PSL_3(\F_4)$, %
\item[(ii)] an~extension of the~subgroup described in XVII(i) by an automorphism of order $2$.%
\end{itemize}
\end{itemize}
\end{theorem}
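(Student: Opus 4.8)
The plan is to analyze the quasiprimitive subgroup $G\subset\SL_6(\C)$ through its generalized Fitting subgroup $F^{*}(G)=F(G)E(G)$, where $F(G)$ is the Fitting subgroup and $E(G)$ is the layer, the central product of the quasisimple components of $G$. Quasiprimitivity forces every abelian normal subgroup to act on $V=\C^{6}$ by scalars, hence to lie in the center; consequently $F(G)$ is nilpotent and each of its Sylow subgroups is either central (cyclic, acting by scalars) or of symplectic type. Because $\dim V=6=2\cdot3$, the only noncentral contributions to $F(G)$ come from extraspecial groups whose unique faithful irreducible representation has dimension dividing $6$, namely $2^{1+2}$ in dimension $2$ and $3^{1+2}$ in dimension $3$.

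First I would dispose of the case $E(G)=1$, in which $F^{*}(G)=F(G)$ is solvable of symplectic type and $\bar G$ is contained in the normalizer of an extraspecial (or symplectic-type) group acting on $\C^{2}\otimes\C^{3}$, or on a single tensor factor. Modulo this normal subgroup the quotient embeds in a product of the small symplectic groups over $\F_{2}$ and $\F_{3}$ governing the outer action on an extraspecial group; the resulting groups are tensor-decomposable and fall under type~I, so they either satisfy the hypotheses of Lemma~\ref{lemma:Segre} or are subgroups of such groups.

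The main work is the case $E(G)\neq1$. By Clifford theory $V|_{E(G)}$ is isotypic, so $E(G)$ acts projectively irreducibly and each component $L$ carries a faithful irreducible projective representation of degree $d\mid 6$, with $d\in\{2,3,6\}$ (a genuine factorization $6=2\cdot3$ occurring only when two components are present). The heart of the argument is the enumeration of quasisimple groups possessing a faithful irreducible projective representation of degree $2$, $3$, or $6$: invoking the classification of finite simple groups together with tables of low-degree projective characters and Schur multipliers, the admissible simple quotients $\bar L$ turn out to be exactly $\A_5$, $\A_6$, $\A_7$, $\PSL_2(\F_7)$, $\PSL_2(\F_{11})$, $\PSL_2(\F_{13})$, $\PSp_4(\F_3)$, $\SU_3(\F_3)$, $\PSU_4(\F_3)$, $\PSL_3(\F_4)$ and $\HaJ$. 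For each I would determine, from the relevant part of the Schur multiplier, which central extension $k.\bar L$ lifts the projective representation to an honest $6$-dimensional linear representation, producing $\SL_2(\F_5)$, $3.\A_6$, $6.\A_6$, $3.\A_7$, $6.\A_7$, $\SL_2(\F_7)$, $6.\PSU_4(\F_3)$, $2.\HaJ$, $6.\PSL_3(\F_4)$, and so on; the entries of the form ``an extension by an automorphism of order $2$'' (as in $\SS_7\supset\A_7$ and $\PGL_2(\F_7)\supset\PSL_2(\F_7)$) arise by adjoining the outer automorphism of $\bar L$ that stabilizes the relevant $6$-dimensional representation inside $\GL_6(\C)$.

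The step I expect to be the main obstacle is precisely this exhaustive enumeration, together with the correct identification of the stem extensions. It is here that one must lean on the full classification of finite simple groups (or, as in Lindsey's original treatment, on an intricate character-theoretic argument) to guarantee that no candidate is overlooked---particularly among the simple groups of Lie type of small rank and the sporadic groups---and to verify that each listed cover genuinely embeds in $\SL_6(\C)$ while its proper central quotients do not. Establishing completeness, rather than handling any individual group, is the delicate point.
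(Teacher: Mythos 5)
This statement is not proved in the paper at all: it is quoted from Lindsey's classification of finite linear groups of degree six (\cite[\S 3]{Li71}), so the only ``paper proof'' to compare against is that citation. Measured against the cited source, your proposal takes a genuinely different route. Lindsey's argument is pre-classification: it is an intricate character-theoretic and local analysis carried out without the classification of finite simple groups (CFSG), which did not exist in 1971. You instead run the modern reduction through the generalized Fitting subgroup: quasiprimitivity forces abelian normal subgroups to be scalar, hence $F(G)$ is of symplectic type modulo the center; the case $E(G)=1$ lands in the tensor-decomposable type I via the $2^{1+2}\otimes 3^{1+2}$ action on $\C^{2}\otimes\C^{3}$; and the case $E(G)\ne 1$ reduces to quasisimple groups with faithful projective irreducible representations of degree dividing $6$. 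These reductions are correct and standard, and this is indeed how one would organize the proof today.

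The gap is exactly where you flag it, and it is not a small one: the enumeration of quasisimple groups admitting a faithful irreducible projective representation of degree $2$, $3$ or $6$, together with the determination of which central extensions realize these linearly in $\SL_6(\C)$, \emph{is} the content of the theorem. Your proposal handles this step by ``invoking the classification of finite simple groups together with tables of low-degree projective characters,'' i.e.\ by citing a body of results (in effect the Hiss--Malle tables of low-dimensional representations of quasisimple groups, or Lindsey's theorem itself) that is equivalent to the statement being proved. As a self-contained proof this is circular; as a citation-based argument it is legitimate, but then it is no stronger than what the paper does by citing \cite{Li71} directly, and it cannot be a reconstruction of the cited proof since it relies on CFSG. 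One further inaccuracy in the reduction: in the $E(G)\ne 1$ case the factorization $6=2\cdot 3$ need not come from two quasisimple components; it can also come from one component of degree $3$ (or $2$) centrally multiplied with a noncentral extraspecial factor of $F(G)$ acting in the complementary degree. All such mixed cases are tensor-decomposable and fall under type I, but your case division should account for them explicitly.
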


Recall that all primitive subgroups are quasiprimitive (see
\cite[\S 1]{MaWo93}).

\medskip
The main purpose of this section is to prove the following result.

\begin{theorem}
\label{theorem:dim-6-invariants} If  $G$ is primitive, then $G$
has a semi-invariant of degree~at~most~$6$~unless there exists a
lift of $\bar{G}$ to $\SL_6(\C)$ that is a~group of type I, VIII
or XVI in the~notation of Theorem~\ref{theorem:Feit}.
\end{theorem}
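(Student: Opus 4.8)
The plan is to feed the classification of Theorem~\ref{theorem:Feit} into explicit symmetric-power computations. Since $G$ is primitive it is quasiprimitive, so some lift of $\bar{G}$ to $\SL_6(\C)$ occurs in the list (I)--(XVII) of Theorem~\ref{theorem:Feit}. Types I, VIII and XVI are precisely the asserted exceptions, so the task is to produce, for a group of each remaining type, a semi-invariant of degree at most $6$. I would first observe that whether $\bar{G}$ has a semi-invariant of degree $d$ is independent of the chosen lift: a nonzero $f\in\mathrm{Sym}^d(V^*)$ spans a $G$-invariant line exactly when the point $[f]\in\P(\mathrm{Sym}^d(V^*))$ is fixed by the projective action of $\bar{G}$, and scalar matrices fix every line. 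Hence it suffices to argue for the specific lift furnished by Theorem~\ref{theorem:Feit}.

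Next I would cut down the number of cases. The types presented as an extension of a base group by an automorphism of order~$2$ (that is, IV(ii), X(ii), XIII(ii), XIV(ii), XV(ii) and XVII(ii)), together with $2.\SS_5$, $\SS_7$ and $\PGL_2(\F_7)$, each contain the corresponding base group as a normal subgroup with quotient $\Z_2$; the base groups are $3.\A_6$, $\SL_2(\F_7)$, $\PSp_4(\F_3)$, $\SU_3(\F_3)$, $6.\PSU_4(\F_3)$, $6.\PSL_3(\F_4)$, and $\SL_2(\F_5)$, $\A_7$, $\PSL_2(\F_7)$ respectively. Each base group is perfect, so every one of its semi-invariants is in fact an invariant, and Lemma~\ref{lemma:normal-extension} then upgrades an invariant of the base group of degree $d$ to a semi-invariant of the extension of the same degree. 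Thus the whole problem reduces to exhibiting an invariant of degree at most $6$ for each (perfect) base group of types II, IV(i), V, VI, VII, IX(i), X(i), XI, XII, XIII(i), XIV(i), XV(i) and XVII(i).

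For these base groups I would locate the invariant by inspecting $\mathrm{Sym}^d(V)$ for $2\le d\le 6$ through the character table, the relevant quantity being whether $\mathrm{Sym}^d(V)$ contains the trivial representation. If $V$ is of real type then $\mathrm{Sym}^2(V)$ already does, and Lemma~\ref{lemma:invariant-quadric} produces an invariant quadric; this handles, for example, the standard representation of $\A_7$. When the centre of the base group acts on $V$ by a nontrivial scalar, as for the groups with $\Z_3$ or $\Z_6$ in the centre (such as $3.\A_6$, $6.\A_6$, $3.\A_7$, $6.\PSU_4(\F_3)$ and $6.\PSL_3(\F_4)$), an invariant can occur only in degrees divisible by $3$ or $6$, so I would examine $\mathrm{Sym}^3(V)$ and $\mathrm{Sym}^6(V)$. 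For the $\SL_2$-type groups whose six-dimensional representation is symplectic (for instance $\SL_2(\F_5)$, where $V$ may be realised as the fifth symmetric power of the standard two-dimensional representation) there is no invariant quadric, but a low-degree invariant is nonetheless available; in the quoted example the classical degree-$4$ invariant of the binary quintic, being $\SL_2(\C)$-invariant, restricts to the required invariant.

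I expect the main obstacle to be computational rather than structural. For the large groups $\PSp_4(\F_3)$, $\SU_3(\F_3)$, $6.\PSU_4(\F_3)$ and $6.\PSL_3(\F_4)$ the symmetric powers $\mathrm{Sym}^d(V)$ with $d\le 6$ are far too large to decompose by hand, so the presence of a trivial summand must be confirmed from symmetric-power characters computed in Magma~\cite{Magma}. The delicate feature is completeness: for every non-exceptional type one must verify that the trivial representation genuinely appears by degree $6$, and in the cases where central scalars restrict invariants to degrees $3$ or $6$ this leaves essentially no slack, so a single failure would force an additional exception. Types VIII and XVI are exactly the exceptional singularities of Theorem~\ref{theorem:main} and carry no low-degree semi-invariant, while type~I is set aside because it falls under the Segre situation of Lemma~\ref{lemma:Segre}; none of the three requires a semi-invariant here.
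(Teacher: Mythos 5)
Your proposal follows essentially the same route as the paper's proof: pass to the lift-independent list of Theorem~\ref{theorem:Feit}, dispose of the order-$2$ extensions (and $2.\SS_5$, $\SS_7$, $\PGL_2(\F_7)$) via Lemma~\ref{lemma:normal-extension} applied to their perfect normal subgroups, obtain degree-$2$ invariants for the real groups via Lemma~\ref{lemma:invariant-quadric}, and certify the remaining perfect groups by symmetric-power character computations in Magma. The only deviations are cosmetic variants within this framework: the paper gets the degree-$4$ invariant of $\SL_2(\F_5)$ from the same table computation rather than from the classical binary-quintic invariant, treats $3.\A_6$ as a subgroup of $3.\A_7$ instead of computing directly, and derives the degree-$6$ invariant of $6.\PSU_4(\F_3)$ from the Shephard--Todd reflection group $\# 34$ (the Mitchell group) rather than from a character computation.
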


\begin{proof}
Recall that changing a lift of $\bar{G}$ to $\SL_6(\C)$ does not
change the~degrees of semi-invariants, which implies that we may
assume that $G$ is one of the~groups listed in
Theorem~\ref{theorem:Feit}.

If the~subgroup $G$ is of type VI, IX(i) or XIII(i), then
the~subgroup $G$ is conjugate to a~subgroup of $\SL_6(\mathbb{Q})$
(see~\cite{Atlas}), and hence $G$ has an~invariant of
degree $2$ by Lemma~\ref{lemma:invariant-quadric}.

If the~subgroup $G$ is of type XV(i), then $G$ is a~subgroup of
the~Mitchell group $6.\PSU_4(\F_3).2$, which is the~group $\# 34$
in~\cite[Table~VII]{SheTo54}, and hence the~subgroup $G$
has~an~invariant~of~degree~$6$, because the~Mitchell group
has~an~invariant~of~degree~$6$ (see~\cite[Table~VII]{SheTo54}).

If the~subgroup $G$ is of type II, V, VII, X(i), XI, XII, XIV(i)
or XVII(i), then the~minimal degree $d_{min}$ of the~invariants of
the~subgroup $G$ is given in the~following table:
\begin{center}
{\renewcommand\arraystretch{1.3}
\begin{tabular}{|c|c|c|c|c|c|c|c|c|} \hline $G$ & $2.\A_5$
& $6.\A_6$ & $3.\A_7$ & $\SL_2(\F_7)$
& $\SL_2(\F_{11})$ & $\SL_2(\F_{13})$ & $\SU_3(\F_3)$ & $6.\PSL_3(\F_4)$\\
\hline
Type & II & V & VII & X(i) & XI & XII & XIV(i) & XVII(i)\\
\hline
$d_{min}$ & $4$ & $6$ & $3$ & $4$ & $4$ & $4$ & $6$ & $6$  \\
\hline
\end{tabular}}
\end{center}

If the~subgroup $G$ is a~subgroup of type IV(i), then $G$ is
a~subgroup of a~quasiprimitive subgroup~of~type~VII, which implies
that the~subgroup $G$ has an invariant of degree $3$.

If the subgroup $G$ is a~subgroup of type III, then it has
a~normal subgroup isomorphic~to~$2.\A_5$, which implies that $G$
has a~semi-invariant of degree~$4$ by
Lemma~\ref{lemma:normal-extension}.

Arguing as in the case of a~subgroup of type III, we see that
the~subgroup $G$ has a~semi-in\-va\-ri\-ant of degree $3$, $2$,
$4$, $2$, $6$, $6$ or $6$ in the case when the~subgroup
$G\subset\SL_6(\mathbb{C})$ is a~quasiprimitive subgroup of type
IV(ii), IX(ii), X(ii), XIII(ii), XIV(ii), XV(ii) or XVII(ii),
respectively.
\end{proof}

\begin{remark}\label{remark:HJ-semiinvariants} In the~notation of
Theorem~\ref{theorem:Feit},
if $G$ is a primitive subgroup of type VIII or XVI, then a direct
computation shows that the~minimal degree of the~semi-invariants
of $G$ equals $12$.
\end{remark}

\section{Exceptional cases}
\label{section:6}

Let $G$ be a~subgroup in $\SL_6(\mathbb{C})$. Define $V$ and
$\bar{G}$ as in Section~\ref{section:6-dim}.

\begin{remark}
\label{remark:semiinvariants-vs-invariants} If the~group $\bar{G}$
is a~simple non-abelian group such that
$Z(G)\subseteq [G,G]$,
where~$Z(G)$ and~\mbox{$[G,G]$} denote the center and the commutator of
the subgroup~$G$, respectively, then every semi-invariant of
the~group~$G$ is its invariant.
\end{remark}

\begin{theorem}
\label{theorem:Hall-Janko} Suppose that $\bar{G}\cong\HaJ$ is
the~Hall--Janko group. Then
$\mathrm{lct}(\mathbb{P}^{5},\bar{G})\geqslant 7/6$.
\end{theorem}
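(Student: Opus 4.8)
The plan is to show that $\mathrm{lct}(\mathbb{P}^5,\bar{G})\geqslant 7/6$ by arguing by contradiction: suppose there is a $\bar{G}$-invariant effective $\mathbb{Q}$-divisor $D\sim_{\mathbb{Q}}-K_{\mathbb{P}^5}$ such that the log pair $(\mathbb{P}^5,\tfrac{7}{6}D)$ fails to be log canonical at some point. The idea is to analyze the minimal center $Z$ of log canonical singularities (a center of $\mathbb{LCS}$) produced by such a $D$, exploit its $\bar{G}$-invariance to bound its orbit, and then derive a contradiction from the fact that $\HaJ$ is too large a group to preserve small geometric configurations in $\mathbb{P}^5$.

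First I would recall from Remark~\ref{remark:semiinvariants-vs-invariants} (since $\HaJ$ is simple non-abelian and $Z(G)\subseteq[G,G]$) that every semi-invariant of $G$ is an invariant, and from Remark~\ref{remark:HJ-semiinvariants} that the minimal degree of a semi-invariant is $12$; in particular $G$ has no invariants of degree $\leqslant 6$, so no low-degree $\bar{G}$-invariant hypersurfaces exist. This rules out $\bar{G}$-invariant divisorial or hypersurface components as candidate centers at low degree and forces any candidate center of $\mathbb{LCS}$ to have low dimension. Applying Lemma~\ref{lemma:Kawamata-Shokurov-trick} and Lemma~\ref{lemma:degree}, one obtains, for the $\bar{G}$-orbit $Y$ of a minimal center $Z$ of dimension $\dim Y=5-s$, a numerical bound $\deg(Y)\leqslant\binom{s+r}{r}$ with $\mu=7/6$, $B_X\sim_{\mathbb{Q}}\mu H$. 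I would run through each possible dimension of $Z$ (a point, a line, a surface, and so on), computing the corresponding degree bound and checking it against what the $6$-dimensional representation of $2.\HaJ$ permits: the number of points in a $\bar{G}$-orbit is constrained below by the smallest index of a subgroup of $\HaJ$, and the possible $\HaJ$-invariant subvarieties of given low degree are tightly restricted.

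The key step is then to show each case is impossible. For zero-dimensional centers, the orbit size is forced to be small by the degree bound, contradicting the minimal permutation degree of $\HaJ$ (which is $100$, far exceeding the allowed orbit length); the group has no faithful permutation action on so few points. For positive-dimensional centers, I would combine the degree bound with the representation theory of $2.\HaJ$ on $\mathbb{C}^6$: an invariant curve or surface of the small degrees permitted by Lemma~\ref{lemma:degree} would give rise to $\bar{G}$-invariant forms (via equations cutting out $Y$ or its span) of degree below $12$, contradicting Remark~\ref{remark:HJ-semiinvariants}, or would force $\bar{G}$ to act on a proper linear subspace, contradicting irreducibility. The main obstacle I anticipate is handling the borderline cases where the degree bound allows a configuration whose non-existence is not immediate from invariant-degree considerations alone; there one must invoke finer character-theoretic data about $2.\HaJ$ — the dimensions of invariant subspaces in low symmetric powers $\mathrm{Sym}^d(\mathbb{C}^6)$ — which is precisely the kind of computation the authors flag as requiring Magma. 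Carrying out this case analysis to eliminate every possible minimal center, and thereby contradicting the existence of $D$, completes the proof that $\mathrm{lct}(\mathbb{P}^5,\bar{G})\geqslant 7/6$.
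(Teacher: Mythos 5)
Your opening matches the paper's strategy: assume $\mathrm{lct}(\mathbb{P}^5,\bar{G})<7/6$, extract a minimal center $S$ via Lemma~\ref{lemma:Kawamata-Shokurov-trick}, rule out $\dim S=4$ by Remarks~\ref{remark:semiinvariants-vs-invariants} and~\ref{remark:HJ-semiinvariants}, and rule out $\dim S=0$ (and more generally reducible orbits) using that $\HaJ$ is simple with minimal faithful permutation degree $100$. But your treatment of positive-dimensional centers has a genuine gap, in fact two. First, the space of degree-$d$ forms vanishing on the orbit $Z$ is a $G$-subrepresentation of $\mathrm{Sym}^d(V^*)$, but it need not contain a one-dimensional subrepresentation; so even if it is nonzero, you get no semi-invariant and hence no contradiction with Remark~\ref{remark:HJ-semiinvariants}. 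The correct contradiction, available only for $d\leqslant 5$, is with the irreducibility of $\mathrm{Sym}^d(V)$ (this is the Magma computation the paper actually uses); for $6\leqslant d\leqslant 11$ the representations $\mathrm{Sym}^d(V)$ are reducible, so those degrees are useless to you even though they lie below $12$. Second, and more seriously, your strategy needs forms of degree at most $5$ vanishing on $Z$ to exist, and this is not guaranteed: Lemma~\ref{lemma:degree} only gives $\deg Z\leqslant 20$ for surfaces and $\deg Z\leqslant 15$ for threefolds, and for such $Z\subset\mathbb{P}^5$ the Hilbert function can satisfy $h^0(\mathcal{O}_Z(n))\geqslant\binom{5+n}{5}$ for all $n\leqslant 5$ (the leading terms $\tfrac{20}{2}n^2\approx 250$ and $\tfrac{15}{6}n^3\approx 312$ at $n=5$ are already comparable to or larger than $252$), so no dimension count forces $Z$ onto a quintic. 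Thus the cases $\dim S\in\{2,3\}$ remain open in your proposal, and deferring them to ``finer character-theoretic data'' does not close them: the missing ingredient is not character theory.

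The paper runs the logic in the opposite direction, which avoids both problems. Since $\mathrm{Sym}^n(V)$ is irreducible for every $n\leqslant 5$, and $H^0(\mathcal{O}_{\mathbb{P}^5}(n)\otimes\mathcal{I})$ is a proper $G$-invariant subspace of $\mathrm{Sym}^n(V^*)$ (proper because $\mathrm{LCS}(\mathbb{P}^5,\lambda D)\neq\varnothing$), these spaces all vanish; Theorem~\ref{theorem:Shokurov-vanishing} then forces the Hilbert function of $Z=S$ to equal that of $\mathbb{P}^5$ in degrees $1$ through $5$, namely $6$, $21$, $56$, $126$, $252$. On the other hand, Theorem~\ref{theorem:Kawamata} makes $S$ normal with rational singularities and provides a klt structure so that, by Remark~\ref{remark:RR}, $h^0(\mathcal{O}_S(nH))=\chi(\mathcal{O}_S(nH))$ is a polynomial in $n$ of degree at most $\dim S\leqslant 3$ for $n\geqslant 1$; but the five values above have fourth finite difference equal to $6\neq 0$, a contradiction. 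This one computation disposes of curves, surfaces and threefolds uniformly --- precisely the cases your argument cannot reach --- so you should replace your ``find invariant forms vanishing on the center'' step by this vanishing-plus-subadjunction argument.
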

\begin{proof}
We may assume that $G\cong 2.\HaJ$ (see
Theorem~\ref{theorem:Feit}). Then $Z(G)\subseteq [G,G]$.

Suppose that~$\mathrm{lct}(\mathbb{P}^{5},\bar{G})<7/6$. Then there
is an~effective $\bar{G}$-invariant $\mathbb{Q}$-divisor
$$
D\sim_{\mathbb{Q}}-K_{\mathbb{P}^{5}}\sim\mathcal{O}_{\mathbb{P}^{5}}\big(6\big),
$$
and there is a~positive rational number $\lambda<7/6$ such that
$(\mathbb{P}^{5},\lambda D)$ is strictly log canonical.

Let $S$ be a~minimal center in
$\mathbb{LCS}(\mathbb{P}^{5},\lambda D)$, let $Z$ be
the~$\bar{G}$-orbit of the~subvariety $S\subset\mathbb{P}^{5}$,
and let $r$ be the~number of irreducible components of
the~subvariety $Z$. We may assume that
$$
\mathbb{LCS}\Big(\mathbb{P}^{5},\lambda D\Big)=\bigcup_{g\in\bar{G}}\Big\{g\big(S\big)\Big\}%
$$
by Lemma~\ref{lemma:Kawamata-Shokurov-trick}. Then
$\mathrm{Supp}(Z)=\mathrm{LCS}(\mathbb{P}^{5},\lambda D)$. It
follows from Lemma~\ref{lemma:centers} that
$$
g\big(S\big)\cap g^{\prime}\big(S\big)\ne \varnothing\iff g\big(S\big)=g^{\prime}\big(S)%
$$
for every $g\in\bar{G}\ni g^{\prime}$. Then
$\mathrm{deg}(Z)=r\mathrm{deg}(S)$.

It follows from Remark~\ref{remark:HJ-semiinvariants} that the
subgroup $G$ does not have invariants of degree up to $6$, which
immediately implies that $\mathrm{dim}(S)\ne 4$ by
Remark~\ref{remark:semiinvariants-vs-invariants}.

Let $\mathcal{I}$ be the~multiplier ideal sheaf of the~log pair
$(\mathbb{P}^{5},\lambda D)$, and let $\mathcal{L}$ be the~log
canonical singularities subscheme of the~log pair
$(\mathbb{P}^{5},\lambda D)$. By
Theorem~\ref{theorem:Shokurov-vanishing}, there is an~exact
sequence
$$
0\longrightarrow H^{0}\Big(\mathcal{O}_{\mathbb{P}^{5}}\big(n\big)\otimes\mathcal{I}\Big)\longrightarrow H^{0}\Big(\mathcal{O}_{\mathbb{P}^{5}}\big(n\big)\Big)\longrightarrow H^{0}\Big(\mathcal{O}_{\mathcal{L}}\otimes\mathcal{O}_{\mathbb{P}^{5}}\big(n\big)\Big)\longrightarrow 0%
$$
for every $n\geqslant 1$. A direct computation shows that
$\mathrm{Sym}^{n}(V)$ is an~irreducible representation of
the~group $G$ for all $n\leqslant 5$. Hence, we see that
$$
h^{0}\Big(\mathcal{O}_{\mathbb{P}^{5}}\big(n\big)\otimes\mathcal{I}\Big)=0
$$
for every $n\in\{1,2,3,4,5\}$. Note that $Z=\mathcal{L}$, because
$(\mathbb{P}^{5},\lambda D)$ is log canonical. Thus, we have
\begin{equation}
\label{equation:exact-sequence-n-6}
h^{0}\Big(\mathcal{O}_{Z}\otimes\mathcal{O}_{\mathbb{P}^{5}}\big(n\big)\Big)=h^{0}\Big(\mathcal{O}_{\mathbb{P}^{5}}\big(n\big)\Big)={5+n\choose n}%
\end{equation}
for every $n\in\{1,2,3,4,5\}$. In particular, we see that $r\le 6$,
because
$$
rh^{0}\Big(\mathcal{O}_{S}\otimes\mathcal{O}_{\mathbb{P}^{5}}\big(1\big)\Big)=h^{0}\Big(\mathcal{O}_{Z}\otimes\mathcal{O}_{\mathbb{P}^{5}}\big(1\big)\Big)=6,%
$$
which implies that $r=1$, because $\bar{G}$ has no nontrivial maps
to $\mathrm{S}_r$ for $2\le r\le 6$.

Note that it follows from the~equality $r=1$ that
$\mathrm{dim}(S)\ne 0$.

Let $H$ be a~hyperplane section of the~variety
$S\subset\mathbb{P}^{5}$. It follows from
Theorem~\ref{theorem:Kawamata}~that
the~variety $S$ is normal and has at most rational singularities,
and there are an~effective \mbox{$\mathbb{Q}$-divisor}~$B_{S}$ and
an~ample \mbox{$\mathbb{Q}$-divisor}~$\Delta$ on the~surface~$S$ such
that
$K_{S}+B_{S}+\Delta\qlin H$,
and the~log pair $(S,B_{S})$ has Kawamata log terminal
singularities.

Using the Riemann--Roch theorem and Remark~\ref{remark:RR}, we see
that $\chi(\mathcal{O}_{S}(nH))$ is a polynomial in $n$ of degree
at most $\dim(S)$  such that
$$
\chi\Big(\mathcal{O}_{S}\big(nH\big)\Big)=h^{0}\Big(\mathcal{O}_{S}\big(nH\big)\Big)
$$
for any $n\geqslant 1$. On the other hand, it follows from
$(\ref{equation:exact-sequence-n-6})$ that
$$
\chi\Big(\mathcal{O}_{S}\big(nH\big)\Big)=\left\{\aligned
&6\ \text{if}\ n=1,\\
&21\ \text{if}\ n=2,\\
&56\ \text{if}\ n=3,\\
&126\ \text{if}\ n=4,\\
&252\ \text{if}\ n=5,\\
\endaligned
\right.
$$
which gives an~inconsistent system of linear equations on the
coefficients of the polynomial~$\chi(\mathcal{O}_{S}(nH))$, since
$\dim(S)\leqslant 3$.
\end{proof}

\begin{theorem}
\label{theorem:A7} Suppose that $G\cong 6.\A_{7}$. Then
$\mathrm{lct}(\mathbb{P}^{5},\bar{G})\geqslant 7/6$.
\end{theorem}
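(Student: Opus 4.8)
The plan is to mimic the proof of Theorem~\ref{theorem:Hall-Janko} almost verbatim, since the logical structure is identical: assume for contradiction that $\mathrm{lct}(\mathbb{P}^{5},\bar{G})<7/6$, produce an effective $\bar{G}$-invariant divisor $D\sim_{\mathbb{Q}}-K_{\mathbb{P}^{5}}$ and a rational $\lambda<7/6$ making $(\mathbb{P}^{5},\lambda D)$ strictly log canonical, and then analyze a minimal center $S$ of log canonical singularities together with its $\bar{G}$-orbit $Z$, of which $Z$ has $r$ irreducible components. As in the Hall--Janko case, one has $\mathrm{deg}(Z)=r\,\mathrm{deg}(S)$ and $\mathrm{Supp}(Z)=\mathrm{LCS}(\mathbb{P}^{5},\lambda D)$, after invoking Lemma~\ref{lemma:Kawamata-Shokurov-trick} and Lemma~\ref{lemma:centers}.

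The key group-theoretic inputs are two. First, by Remark~\ref{remark:semiinvariants-vs-invariants} I need $Z(G)\subseteq[G,G]$ for $G\cong 6.\A_7$ so that semi-invariants coincide with invariants; combined with the fact (Theorem~\ref{theorem:dim-6-invariants} and the associated computation, cf.~the type VIII entry) that the minimal degree of a semi-invariant of $6.\A_7$ exceeds $6$, this rules out $\mathrm{dim}(S)=4$, exactly as for $2.\HaJ$. Second, and this is where I expect the only genuine divergence from the Hall--Janko argument, I must verify that $\mathrm{Sym}^{n}(V)$ is an irreducible $G$-representation for $n\le 5$ (or at least that $H^{0}(\mathcal{O}_{\mathbb{P}^{5}}(n)\otimes\mathcal{I})=0$, i.e.\ $G$ has no invariants in those degrees), so that the exact sequence from Theorem~\ref{theorem:Shokurov-vanishing} forces $h^{0}(\mathcal{O}_{Z}\otimes\mathcal{O}_{\mathbb{P}^{5}}(n))={5+n\choose n}$ for $n\in\{1,2,3,4,5\}$. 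This is a finite character computation for $6.\A_7$ that, as the paper notes for the analogous groups, I would carry out in Magma~\cite{Magma}.

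From the values $h^{0}(\mathcal{O}_{Z}\otimes\mathcal{O}_{\mathbb{P}^{5}}(1))=6$ I deduce $r\le 6$, and then $r=1$ because $\bar{G}\cong\A_7$ has no nontrivial homomorphism to $\mathrm{S}_r$ for $2\le r\le 6$ (the smallest faithful permutation degree of $\A_7$ is $7$); hence $Z=S$ is irreducible and $\mathrm{dim}(S)\ne 0$. Thus $\mathrm{dim}(S)\in\{1,2,3\}$, and $Z=\mathcal{L}$ gives
\begin{equation*}
\chi\Big(\mathcal{O}_{S}\big(nH\big)\Big)=h^{0}\Big(\mathcal{O}_{S}\big(nH\big)\Big)={5+n\choose n}
\end{equation*}
for $n\in\{1,2,3,4,5\}$, using Theorem~\ref{theorem:Kawamata} and Remark~\ref{remark:RR} to identify the Euler characteristic with a polynomial in $n$ of degree $\le\dim(S)\le 3$.

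The contradiction is then purely numerical and identical to the Hall--Janko case: the five values $6,21,56,126,252$ cannot be interpolated by a polynomial of degree at most $3$. The main obstacle, and really the only nontrivial step, is the irreducibility (equivalently, no-invariants) verification for $\mathrm{Sym}^{n}(V)$, $n\le 5$, for the $6$-dimensional representation of $6.\A_7$; everything else transfers mechanically from Theorem~\ref{theorem:Hall-Janko}. I would present the argument by stating that it proceeds \emph{mutatis mutandis} as in the proof of Theorem~\ref{theorem:Hall-Janko}, recording only the group-specific facts about $6.\A_7$ that replace the corresponding facts about $2.\HaJ$.
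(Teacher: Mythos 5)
Your proposal breaks down precisely at the step you single out as ``the only nontrivial step'': the verification that $\mathrm{Sym}^{n}(V)$ is irreducible for $n\leqslant 5$. For $G\cong 6.\A_{7}$ this is false. Only $V$ and $\mathrm{Sym}^{2}(V)$ are irreducible; the paper's computation gives $\mathrm{Sym}^{3}(V)=T_{36}\oplus T_{20}$, $\mathrm{Sym}^{4}(V)=U_{6}\oplus U_{15}\oplus\hat{U}_{15}\oplus U_{21}^{\oplus 2}\oplus U_{24}\oplus\hat{U}_{24}$, and $\mathrm{Sym}^{5}(V)=W_{11}^{\oplus 2}\oplus W_{24}^{\oplus 2}\oplus\hat{W}_{24}^{\oplus 2}\oplus W_{36}^{\oplus 4}$. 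Consequently the vanishing $h^{0}(\mathcal{O}_{\mathbb{P}^{5}}(n)\otimes\mathcal{I})=0$ is available only for $n\in\{1,2\}$, and your parenthetical fallback does not rescue the argument: $H^{0}(\mathcal{O}_{\mathbb{P}^{5}}(n)\otimes\mathcal{I})$ is a $G$-subrepresentation of $H^{0}(\mathcal{O}_{\mathbb{P}^{5}}(n))$, so what forces it to vanish is irreducibility of the ambient representation, not the absence of invariants; the absence of semi-invariants of degree $\leqslant 11$ (Remark~\ref{remark:HJ-semiinvariants}) excludes only one-dimensional subrepresentations, not, say, the $20$-dimensional summand $T_{20}\subset\mathrm{Sym}^{3}(V)$. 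Hence you cannot conclude $h^{0}(\mathcal{O}_{S}(nH))=\binom{5+n}{n}$ for $n=3,4,5$, and your ``purely numerical and identical'' contradiction (a polynomial of degree $\leqslant 3$ cannot take the values $6,21,56,126,252$) is simply unavailable for this group.

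What the paper actually does instead---and this is the bulk of its proof of Theorem~\ref{theorem:A7}---is to use the decompositions above to constrain the possible values: $h^{0}(\mathcal{O}_{S}(3H))\in\{20,36,56\}$ by $(\ref{equation:equality-3})$, $h^{0}(\mathcal{O}_{S}(4H))\not\in\{106,114,\ldots,119,121,\ldots,125\}$ by $(\ref{eq:values-of-h4})$, and $h^{0}(\mathcal{O}_{S}(5H))\not\in\{66,171,179\}$ by $(\ref{eq:values-of-h5})$, and then to run separate Riemann--Roch case analyses for $\dim(S)=1,2,3$. For curves one gets $\deg(S)=15$ and $g=10$, forcing $h_{5}=66$, which is excluded; for surfaces the two admissible triples $(\deg(S),H\cdot K_{S},\chi(\mathcal{O}_{S}))$ force $h_{4}\in\{106,116\}$, both excluded; for threefolds one needs the additional geometric inequality $(\ref{equation:A-7-KHH-HHH})$, coming from the genus of a curve section, together with a case analysis on $h_{3}\in\{20,36,56\}$, each branch ending in a contradiction via $h_{4}$ and $h_{5}$. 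None of this transfers mechanically from Theorem~\ref{theorem:Hall-Janko}. The parts of your argument that do transfer---excluding $\dim(S)=4$ via Remark~\ref{remark:semiinvariants-vs-invariants}, and obtaining $r=1$ from $h^{0}(\mathcal{O}_{Z}\otimes\mathcal{O}_{\mathbb{P}^{5}}(1))=6$ plus the absence of nontrivial maps $\A_{7}\to\mathrm{S}_{r}$ for $r\leqslant 6$---are correct, but they are the easy parts; the group-specific representation theory of $6.\A_{7}$ changes the entire second half of the proof.
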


\begin{proof}
Suppose that~$\mathrm{lct}(\mathbb{P}^{5},\bar{G})<7/6$. Then
there is an~effective $\bar{G}$-invariant $\mathbb{Q}$-divisor
$$
D\sim_{\mathbb{Q}}-K_{\mathbb{P}^{5}}\sim\mathcal{O}_{\mathbb{P}^{5}}\big(6\big),
$$
and there is a~positive rational number $\lambda<7/6$ such that
$(\mathbb{P}^{5},\lambda D)$ is strictly log canonical.

Arguing as in the~proof of Theorem~\ref{theorem:Hall-Janko}, we
may assume that
$$
\mathbb{LCS}\Big(\mathbb{P}^{5},\lambda D\Big)=\bigcup_{g\in\bar{G}}\Big\{g\big(S\big)\Big\},%
$$
where $S$ is a~minimal center of log canonical singularities of
the~log pair $(\mathbb{P}^{5},\lambda D)$.

Let $Z$ be the~$\bar{G}$-orbit of the~subvariety
$S\subset\mathbb{P}^{5}$. Then
$\mathrm{LCS}(\mathbb{P}^{5},\lambda D)=\mathrm{Supp}(Z)$.

It follows from Remark~\ref{remark:HJ-semiinvariants} that the
subgroup $G$ does not have invariants of degree up to $6$, which
immediately implies that $\mathrm{dim}(S)\ne 4$ by
Remark~\ref{remark:semiinvariants-vs-invariants}.

Let $\mathcal{I}$ be the~multiplier ideal sheaf of the~log pair
$(\mathbb{P}^{5},\lambda D)$, and let $\mathcal{L}$ be the~log
canonical singularities subscheme of the~log pair
$(\mathbb{P}^{5},\lambda D)$. By
Theorem~\ref{theorem:Shokurov-vanishing}, we have
\begin{equation}
\label{equation:exact-sequence}
\chi\Big(\mathcal{O}_{Z}\big(nH\big)\Big)=h^{0}\Big(\mathcal{O}_{Z}\big(nH\big)\Big)=
{5+n\choose n}-h^{0}\Big(\mathcal{O}_{\mathbb{P}^{5}}\big(n\big)\otimes\mathcal{I}\Big),%
\end{equation}
for every $n\geqslant 1$, because $Z=\mathcal{L}$. Put
$q_{n}=h^{0}(\mathcal{O}_{\mathbb{P}^{5}}(n)\otimes\mathcal{I})$
for every $n\geqslant 1$. Then
$$
q_{1}=q_{2}=0,
$$
because $V$ and $\mathrm{Sym}^{2}(V)$ are irreducible
representations of the~group $G$. Hence
\begin{equation}
\label{equation:equality-1-2}
h^{0}\Big(\mathcal{O}_{S}\otimes\mathcal{O}_{\mathbb{P}^{5}}\big(n\big)\Big)=h^{0}\Big(\mathcal{O}_{\mathbb{P}^{5}}\big(n\big)\Big)={5+n\choose n}%
\end{equation}
for $n\in\{1, 2\}$ by $(\ref{equation:exact-sequence})$. In
particular, we see that $r\le 6$, because
$$
rh^{0}\Big(\mathcal{O}_{S}\otimes\mathcal{O}_{\mathbb{P}^{5}}\big(1\big)\Big)=h^{0}\Big(\mathcal{O}_{Z}\otimes\mathcal{O}_{\mathbb{P}^{5}}\big(1\big)\Big)=6,%
$$
which implies that $r=1$ and $Z=S$, because $\bar{G}$
has no nontrivial maps
to $\mathrm{S}_r$ for $2\le r\le 6$.

Note that it follows from the~equality $r=1$ that
$\mathrm{dim}(S)\ne 0$.

Similarly, we see that $q_{3}\in\{0,20,36\}$, because
$$
\mathrm{Sym}^{3}\big(V\big)=T_{36}\oplus T_{20},
$$
where $T_{i}$ is an~irreducible representation of the~group $G$ of
dimension $i$. Thus, we have
\begin{equation}
\label{equation:equality-3}
h^{0}\Big(\mathcal{O}_{S}\otimes\mathcal{O}_{\mathbb{P}^{5}}\big(3\big)\Big)=h^{0}\Big(\mathcal{O}_{\mathbb{P}^{5}}\big(3\big)\Big)-h^{0}\Big(\mathcal{O}_{\mathbb{P}^{5}}\big(3\big)\otimes\mathcal{I}\Big)=56-h^{0}\Big(\mathcal{O}_{\mathbb{P}^{5}}\big(3\big)\otimes\mathcal{I}\Big)\in\big\{20,36,56\big\}%
\end{equation}
by $(\ref{equation:exact-sequence})$. Moreover, one has
$$
\mathrm{Sym}^{4}\big(V\big)=U_{6}\oplus U_{15}\oplus
\hat{U}_{15}\oplus U_{21}^{{}\oplus 2}
\oplus U_{24}\oplus \hat{U}_{24},%
$$
where $U_{i}$ and $\hat{U}_{i}$ are~irreducible representations of
the~group $G$ of dimension $i$. In particular,
\begin{equation}
\label{eq:values-of-h4}
h^{0}\Big(\mathcal{O}_{S}\otimes\mathcal{O}_{\mathbb{P}^{5}}\big(4\big)\Big)=
126-h^{0}\Big(\mathcal{O}_{\mathbb{P}^{5}}\big(4\big)\otimes\mathcal{I}\Big)
\not\in\big\{106,114,\ldots,119,121,\ldots,125\big\}%
\end{equation}
by $(\ref{equation:exact-sequence})$. Finally, one has
$$
\mathrm{Sym}^{5}\big(V\big)=W_{11}^{{}\oplus 2}\oplus
W_{24}^{{}\oplus 2}\oplus \hat{W}_{24}^{{}\oplus 2}\oplus
W_{36}^{{}\oplus 4},%
$$
where $W_{i}$ and $\hat{W}_{i}$ are~irreducible representations of
the~group $G$ of dimension $i$. By
$(\ref{equation:exact-sequence})$,~we~have
\begin{equation}\label{eq:values-of-h5}
h^{0}\Big(\mathcal{O}_{S}\otimes\mathcal{O}_{\mathbb{P}^{5}}\big(5\big)\Big)=
252-h^{0}\Big(\mathcal{O}_{\mathbb{P}^{5}}\big(5\big)\otimes\mathcal{I}\Big)
\not\in\big\{66, 171, 179\big\}.%
\end{equation}

Let $H$ be a~hyperplane section of the~variety
$S\subset\mathbb{P}^{5}$. It follows from
Theorem~\ref{theorem:Kawamata}~that
the~variety $S$ is normal and has at most rational singularities,
and there are an~effective \mbox{$\mathbb{Q}$-divisor}~$B_{S}$ and
an~ample \mbox{$\mathbb{Q}$-divisor}~$\Delta$ on the~surface~$S$ such
that
$K_{S}+B_{S}+\Delta\qlin H$,
and the~log pair $(S,B_{S})$ has Kawamata log terminal
singularities.

Suppose that $\mathrm{dim}(S)=1$. Then $S$ is a~smooth curve of
genus $g$ such that
$$
\mathrm{deg}\big(H\big)=\mathrm{deg}\big(S\big)>2g-2,
$$
and $\mathrm{deg}(Z)\leqslant 15$ by Lemma~\ref{lemma:degree}. By
the~Riemann--Roch theorem, we~get
$$
h^{0}\Big(\mathcal{O}_{S}\big(nH\big)\Big)=n\mathrm{deg}\big(S\big)-g+1
$$
for every $n\geqslant 1$ (see Remark~\ref{remark:RR}). Using
$(\ref{equation:equality-1-2})$, we see~that
$$
\left\{\aligned
&6=\mathrm{deg}\big(S\big)-g+1,\\
&21=2\mathrm{deg}\big(S\big)-g+1,\\
\endaligned
\right.
$$
which implies that $\mathrm{deg}(S)=15$ and $g=10$. Using
$(\ref{equation:equality-1-2})$ again one obtains
$$5\deg\big(S\big)-g+1=66,$$
which is impossible by~$(\ref{eq:values-of-h5})$.

Suppose that $\mathrm{dim}(S)=2$. Using the~Riemann--Roch theorem
and Remark~\ref{remark:RR}, we have
\begin{equation}
\label{equation:RR-2}
h^{0}\Big(\mathcal{O}_{S}\big(nH\big)\Big)=\chi\Big(\mathcal{O}_{S}\big(nH\big)\Big)=
\frac{n^{2}}{2}\Big(H\cdot H\Big)-\frac{n}{2}\Big(H\cdot K_{S}\Big)+\chi\big(\mathcal{O}_{S}\big)%
\end{equation}
for any $n\geqslant 1$. Thus, using
$(\ref{equation:equality-1-2})$ and $(\ref{equation:equality-3})$,
we see~that
$$
\Big( \mathrm{deg}\big(S\big), H\cdot K_{S}, \chi\big(\mathcal{O}_{S}\big)\Big)\in
\Big\{\big(5,-5, 6\big),\big(15,5, -4\big)\Big\},%
$$
because $H\cdot H=\mathrm{deg}(S)>0$. If $(\mathrm{deg}(S), H\cdot
K_{S}, \chi(\mathcal{O}_{S}))=(15,5, -4)$, then
$$
h^{0}\Big(\mathcal{O}_{S}\big(4H\big)\Big)=8\Big(H\cdot H\Big)-2\Big(H\cdot K_{S}\Big)+
\chi\big(\mathcal{O}_{S}\big)=106,%
$$
which is impossible by $(\ref{eq:values-of-h4})$. If
$(\mathrm{deg}(S), H\cdot K_{S}, \chi(\mathcal{O}_{S}))=(15,5, 6)$,
then
$$
h^{0}\Big(\mathcal{O}_{S}\big(4H\big)\Big)=116,%
$$
which is again impossible by $(\ref{eq:values-of-h4})$.

We see that $\mathrm{dim}(S)=3$. Then $H\cdot H \cdot
H=\mathrm{deg}(S)\geqslant\mathrm{codim}(S)+1=3$, since~$V$
is an irreducible representation of the group~$G$.

Let $H^{\prime}$ be another general hyperplane section of
$S\subset\mathbb{P}^{5}$. Put $C=H\cap H^{\prime}$. Then
$$
-2\leqslant 2g\big(C\big)-2=H\cdot H\cdot K_{S}+2\Big(H\cdot H \cdot H\Big),%
$$
where $g(C)$ is the~genus of the~curve $C$. Thus, we see that
\begin{equation}
\label{equation:A-7-KHH-HHH}
H\cdot H\cdot K_{S}\geqslant-2-2\mathrm{deg}\big(S\big).%
\end{equation}

By the~Riemann--Roch theorem and Remark~\ref{remark:RR}, there is
$\gamma\in\mathbb{Z}$ such that
$$
h^{0}\Big(\mathcal{O}_{S}\big(nH\big)\Big)=\chi\Big(\mathcal{O}_{S}\big(nH\big)\Big)=\frac{n^{3}}{6}\Big(H\cdot H\cdot H\Big)+\frac{n^{2}}{4}\Big(H\cdot H\cdot K_{S}\Big)+\frac{n}{12}\gamma+\chi\big(\mathcal{O}_{S}\big)%
$$
for any $n\geqslant 1$. Put $h_n=h^{0}(\mathcal{O}_{S}(nH))$. Then
\begin{equation}\label{eq:RR-1}
h_4-3h_3+3h_2-h_1=H\cdot H\cdot H,
\end{equation}
and
\begin{equation}\label{eq:RR-2}
h_3-2h_2+h_1=2\Big(H\cdot H\cdot H\Big)+\frac{1}{2}\Big(H\cdot
H\cdot K_S\Big),
\end{equation}
which implies after applying~$(\ref{equation:A-7-KHH-HHH})$
\begin{equation}\label{eq:RR-3}
h_4\leqslant 2h_1-5h_2+4h_3+1.
\end{equation}
Since $H\cdot H\cdot H\geqslant 3$, the~equality~$(\ref{eq:RR-1})$
also implies
\begin{equation}\label{eq:RR-4}
h_4\geqslant 3+h_1-3h_2+3h_3.
\end{equation}

Recall that $h_1=6$, $h_2=20$, $h_3\in\{20, 36, 56\}$.

If $h_3=20$, then~$(\ref{eq:RR-3})$ implies that $h_4\leqslant
-12$, which is a~contradiction.

If $h_3=36$, then~$(\ref{eq:RR-3})$ and $(\ref{eq:RR-4})$ imply
that $52\geqslant h_4\geqslant 54$, which is a~contradiction.

We see that $h_3=56$. Then~$(\ref{eq:RR-4})$ implies that
$h_4\geqslant 114$, so that
$$
h_4\in\big\{120, 126\big\}
$$
by~$(\ref{eq:values-of-h4})$. If $h_4=120$, then~$(\ref{eq:RR-1})$
implies that $H\cdot H\cdot H=9$. Note that
\begin{equation}\label{eq:RR-5}
H\cdot H\cdot H=h_5-3h_4+3h_3-h_2,
\end{equation}
and hence $h_5=171$, which is impossible
by~$(\ref{eq:values-of-h5})$. Thus, we see that $h_4=126$.

It follows from $(\ref{eq:RR-1})$ and $(\ref{eq:RR-5})$ that
$H\cdot H\cdot H=15$ and $h_5=179$, which is impossible
by~$(\ref{eq:values-of-h5})$.
\end{proof}

\section{Seven-dimensional singularities}
\label{section:conclusion}

Let $G$ be a~finite subgroup in $\SL_7(\mathbb{C})$, and let
$\phi\colon\SL_7(\mathbb{C})\to\mathrm{Aut}(\mathbb{P}^{6})$ be
the~natural~projec\-tion.~Put $\bar{G}=\phi(G)$. We say that
the~subgroup $G$ is the~lift of the~subgroup
$\bar{G}$~to~$\SL_7(\mathbb{C})$.

\begin{theorem}[{\cite[Theorem~4.1]{Wa69}, \cite[Theorem~I]{Wa70}}]
\label{theorem:Feit-7} Suppose that the~subgroup $G$ is
quasiprimitive.~Then~there~is a~lift of the~subgroup $\bar{G}$ to
$\SL_7(\C)$ that is contained in the~following~list:
\begin{itemize}
\item[(I)] a~subgroup of the subgroup
$G_7\subset\SL_7(\mathbb{C})$ such that
$$
G_{7}=\mathrm{Norm}_{\SL_7(\C)}(\mathbb{H}_7)
\cong\mathbb{H}_7\rtimes\SL_2\Big(\mathbb{F}_7\Big),
$$
where $\mathbb{H}_7$ is the~Heisenberg group of order $7^3$, and
the~corresponding seven-dimensional representation of the~group
$\mathbb{H}_7$ is any of its $7$-dimensional irreducible representations,
\item[(II)] $\PSL_2(\F_{13})$, %
\item[(III)]
\begin{itemize}
\item[(i)] $\PSL_2(\F_8)$,%
\item[(ii)] an~extension of the~subgroup described in III(i) by an~automorphism of order $3$,%
\end{itemize}
\item[(IV)] $\A_8$ or $\SS_8$,%
\item[(V)]
\begin{itemize}
\item[(i)] $\PSL_2(\F_7)$,%
\item[(ii)] $\PGL_2(\F_7)$,
\end{itemize}
\item[(VI)]
\begin{itemize}
\item[(i)] $\PSU_3(\F_3)$,
\item[(ii)] an~extension of the~subgroup described in VI(i) by an automorphism of order $2$,%
\end{itemize}
\item[(VII)] $\Sp_6(\F_2)$.
\end{itemize}
\end{theorem}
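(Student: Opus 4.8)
The plan is to classify quasiprimitive $G\subset\SL_7(\C)$ by analysing its normal subgroups, systematically exploiting that $7$ is prime; since the statement concerns $\bar{G}$, I will classify $G$ modulo scalars and record a convenient lift at the end. First I would draw the standard consequences of quasiprimitivity. As $V=\C^{7}$ is irreducible, Schur's lemma forces $Z(G)$ to consist of scalar matrices. If $A\trianglelefteq G$ is abelian, then $V|_{A}$ is isotypic by quasiprimitivity; since the irreducible representations of $A$ are one-dimensional, $V|_{A}$ is a single character repeated, so $A$ acts by scalars and $A\subseteq Z(G)$. Hence $G$ has no noncentral normal abelian subgroup, and in particular the centre of the Fitting subgroup $F(G)$ is scalar.

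Next I would examine $F(G)$. Being nilpotent with scalar centre, each of its Sylow $q$-subgroups restricts isotypically to $V$ and is therefore of symplectic type, i.e. a central product of a cyclic group with an extraspecial $q$-group of order $q^{1+2m}$; such a group has faithful irreducible degree $q^{m}$, which must divide $\dim V=7$. The only solution is $q=7$, $m=1$, giving the Heisenberg group $\mathbb{H}_7$ of order $7^{3}$. This produces a dichotomy. If the noncentral part of $F(G)$ is nontrivial, then $\mathbb{H}_7\trianglelefteq G$ and $V|_{\mathbb{H}_7}$ is its unique faithful $7$-dimensional irreducible; the conjugation action of $G$ on $\mathbb{H}_7/Z(\mathbb{H}_7)\cong\F_7^{2}$ preserves the commutator pairing, so
\[
G\subseteq\mathrm{Norm}_{\SL_7(\C)}(\mathbb{H}_7)\cong\mathbb{H}_7\rtimes\Sp_2(\F_7)=\mathbb{H}_7\rtimes\SL_2(\F_7),
\]
which is exactly type I. Otherwise $F(G)$ is scalar, so $F^{*}(G)$ equals the layer $E(G)$, a central product of quasisimple components. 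A central product of $k\ge 2$ components would act through a tensor product of degree $\prod_{i}\dim\rho_{i}$ with each factor $\dim\rho_{i}\ge 2$; as $7$ is prime this cannot equal $7$, so $E(G)=L$ is a single quasisimple group acting irreducibly, and $G/Z(G)$ is almost simple with socle $S=L/Z(L)$.

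It then remains to determine all quasisimple groups $L$ admitting a faithful irreducible $7$-dimensional complex representation, together with their almost simple overgroups inside $\SL_7(\C)$. This yields the groups of types II--VII: the relevant covers of $\PSL_2(\F_{13})$, $\PSL_2(\F_8)$, $\A_8\cong\PSL_4(\F_2)$, $\PSL_2(\F_7)$, $\PSU_3(\F_3)$ and $\Sp_6(\F_2)$, while the extensions by outer automorphisms (the passage to $\PGL_2(\F_7)$, to $\SS_8$, and the order-$2$ or order-$3$ overgroups) account for the ``(ii)'' entries. Once the list is in hand, each degree-$7$ representation can be exhibited explicitly and its field of definition checked against \cite{Atlas}.

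The main obstacle is precisely this almost simple step: proving that the list above is \emph{complete}. A modern argument would invoke the classification of finite simple groups together with the known tables of low-dimensional irreducible representations. A more self-contained proof, in the spirit of Wales~\cite{Wa69,Wa70}, instead leverages that $7$ is prime: when $7$ exactly divides $|G|$ the Sylow $7$-subgroup is cyclic, so the theory of blocks of cyclic defect (Brauer trees) sharply constrains the character degrees, while any element of order $7$ acting on $\C^{7}$ must have all seven primitive $7$-th roots of unity as eigenvalues, imposing rigid trace conditions. Carrying out this modular character analysis, together with the separate treatment of the case $49\mid|G|$, is the heavy part; the eventual identification of the groups and the verification of their representation degrees are routine and can be confirmed with \cite{Atlas} or \cite{Magma}.
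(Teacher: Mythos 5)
The paper itself contains no internal proof of this statement: Theorem~\ref{theorem:Feit-7} is quoted verbatim from Wales (\cite[Theorem~4.1]{Wa69}, \cite[Theorem~I]{Wa70}), so your attempt must be judged as a standalone argument. Your structural reduction is the standard one and is essentially sound: quasiprimitivity forces every abelian normal subgroup into the scalar centre; a noncentral normal $7$-subgroup is of symplectic type with faithful irreducible degree dividing $7$, hence extraspecial of order $7^3$, and $G$ then lands in $\mathrm{Norm}_{\SL_7(\C)}(\mathbb{H}_7)\cong\mathbb{H}_7\rtimes\SL_2(\F_7)$, i.e.\ type I; otherwise $F^{*}(G)=Z(G)\cdot L$ with $L$ a single quasisimple component (primality of $7$ kills tensor decompositions), so $\bar{G}$ is almost simple. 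One small step you elide: symplectic type allows the extraspecial group of order $7^3$ and exponent $49$, and this must be excluded before you may write ``Heisenberg''; the clean way is to observe that its characteristic subgroup $\Omega_1\cong\Z_7\times\Z_7$ would be a noncentral abelian normal subgroup of $G$, contradicting quasiprimitivity.

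The genuine gap is the one you yourself flag: the entire content of Wales's two papers is the \emph{completeness} of the list II--VII, i.e.\ that the only quasisimple groups with a faithful irreducible $7$-dimensional complex representation are the stated covers of $\PSL_2(\F_{13})$, $\PSL_2(\F_8)$, $\A_8$, $\PSL_2(\F_7)$, $\PSU_3(\F_3)$ and $\Sp_6(\F_2)$, together with the determination of which outer extensions actually act on $\C^7$ (for instance, why $\SS_8$ occurs but $\PGL_2(\F_{13})$ does not --- the outer automorphism of $\PSL_2(\F_{13})$ swaps its two degree-$7$ characters, so they do not extend). Your proposal does not carry this out: appealing to ``CFSG plus tables'' merely replaces the cited theorem by a larger citation, and the Wales-style programme you name --- cyclic-defect/Brauer-tree analysis when $7\,\|\,|G|$, a separate argument when $49\mid|G|$ (which, note, is still possible after normal $7$-subgroups are excluded and must be handled) --- is described but not executed. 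A symptom of the care this part requires: your assertion that an element of order $7$ has ``all seven primitive $7$-th roots of unity'' as eigenvalues is wrong as stated (there are only six primitive ones); the correct rigidity statement is that such an element acting faithfully and irreducibly on $\C^7$ has each of the seven $7$-th roots of unity once, hence trace $0$. So the proposal is a correct reduction to, and an accurate road map of, the hard classification --- but not a proof of it.
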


\begin{remark}
\label{remark:conjugate-non-unique} Up to conjugation there are
two primitive subgroups in $\SL_7(\C)$ that are
isomorphic to~\mbox{$\PSL_2(\F_8)$}:
one is conjugate to a~subgroup in $\SL_7(\Q)$, and
another is conjugate to a~subgroup in $\SL_7(\Q(\xi_9))$,
where $\xi_9$ is a~primitive root of unity of degree $9$.
Similarly, up to conjugation there are
two primitive subgroups in $\SL_7(\C)$ that are
isomorphic to~\mbox{$\PSU_3(\F_3)$}:
one is conjugate to a~subgroup in $\SL_7(\Q)$,
another is conjugate to a~subgroup in $\SL_7(\Q(\sqrt{-1}))$.
The detailed information on the corresponding representations
may be found in~\cite{Atlas}.
\end{remark}

The main purpose of this section is to prove the following result.

\begin{theorem}
\label{theorem:dim-7-our} Suppose that $G$ is~quasiprimitive. Then
either $G$ has a~semi-invariant of degree at most~$7$,
or $G$ is a~subgroup of the subgroup
$G_7\subset\SL_7(\mathbb{C})$ (see Theorem~\ref{theorem:Feit-7}).
\end{theorem}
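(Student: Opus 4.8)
The plan is to proceed exactly as in the proof of Theorem~\ref{theorem:dim-6-invariants}, treating the list of Theorem~\ref{theorem:Feit-7} case by case and exhibiting, for every group on the list except subgroups of $G_7$, a semi-invariant of degree at most $7$. Since changing the lift of $\bar{G}$ to $\SL_7(\C)$ does not affect the degrees of semi-invariants, I may assume $G$ is literally one of the groups $\PSL_2(\F_{13})$, $\PSL_2(\F_8)$ (and its degree-$3$ extension), $\A_8$, $\SS_8$, $\PSL_2(\F_7)$, $\PGL_2(\F_7)$, $\PSU_3(\F_3)$ (and its degree-$2$ extension), and $\Sp_6(\F_2)$. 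The case $G\subseteq G_7$ of type I is excluded by hypothesis, so it suffices to dispose of types II--VII.

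First I would handle the groups that are visibly defined over $\Q$ or $\R$. The symmetric and alternating groups $\A_8$, $\SS_8$ of type IV, as well as $\Sp_6(\F_2)$ of type VII and $\PSL_2(\F_7)$, $\PGL_2(\F_7)$ of type V, have their relevant $7$-dimensional representations realizable over $\Q$ (this can be read off from \cite{Atlas}); hence by Lemma~\ref{lemma:invariant-quadric} each such $G$ possesses an invariant of degree $2$, which is in particular a semi-invariant of degree at most $7$. For $\PSU_3(\F_3)$ of type VI(i), one of its two $7$-dimensional representations is again rational (see Remark~\ref{remark:conjugate-non-unique} and \cite{Atlas}), giving an invariant of degree $2$; the degree-$2$ extension VI(ii) then has $\PSU_3(\F_3)$ as a normal subgroup with abelian quotient, so Lemma~\ref{lemma:normal-extension} yields a semi-invariant of degree $2$.

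It remains to treat the groups $\PSL_2(\F_{13})$ (type II) and $\PSL_2(\F_8)$ together with its extension (type III). For these I would compute directly the minimal degree $d_{\min}$ of an invariant in the symmetric powers $\mathrm{Sym}^{n}(V)$ for $n\le 7$, decomposing each $\mathrm{Sym}^{n}(V)$ into irreducible constituents and checking for the appearance of the trivial representation; for the extension III(ii) one passes from an invariant of the normal subgroup $\PSL_2(\F_8)$ to a semi-invariant of the extension via Lemma~\ref{lemma:normal-extension}. These character-theoretic computations are most conveniently carried out in Magma~\cite{Magma}, as elsewhere in the paper, and the expected outcome is that each of these groups acquires a genuine invariant of degree at most $7$.

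The main obstacle is the same one that singles out the exceptional groups in dimension six: for a group whose smallest invariant has large degree, the degree bound $7$ might fail, and verifying that types II and III really do admit low-degree invariants requires the explicit symmetric-power decompositions rather than a soft argument. Type~I is deliberately left aside because the Heisenberg normalizer $G_7$ need not have small-degree semi-invariants, which is precisely why it forms the exceptional alternative in the statement; all the work is therefore in confirming that every \emph{other} quasiprimitive possibility is eliminated by an invariant of degree at most $7$.
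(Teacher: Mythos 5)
Your overall strategy coincides with the paper's: reduce to the groups listed in Theorem~\ref{theorem:Feit-7} via the lift-independence of semi-invariant degrees, eliminate the rationally-defined types with Lemma~\ref{lemma:invariant-quadric}, pass from normal subgroups to their extensions with Lemma~\ref{lemma:normal-extension}, and settle the remaining types by explicit computation of minimal invariant degrees (the paper records these in a table: $d_{\min}=2$ for types II and III(i), $d_{\min}=3$ for type VI(i)). However, your treatment of type VI has a genuine gap. By Remark~\ref{remark:conjugate-non-unique} there are \emph{two} non-conjugate primitive subgroups of $\SL_7(\C)$ isomorphic to $\PSU_3(\F_3)$: one conjugate into $\SL_7(\Q)$, and one conjugate only into $\SL_7(\Q(\sqrt{-1}))$. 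The theorem quantifies over all quasiprimitive subgroups $G$, so both copies are ``of type VI(i)'' and both must be handled. Your argument --- that one of the two $7$-dimensional representations is rational and therefore yields an invariant of degree $2$ --- disposes only of the rational copy. Moreover, it cannot be repaired by the same method for the other copy: its character takes non-real values (the representation is defined over $\Q(\sqrt{-1})$ and not over $\R$), hence the representation is not self-dual, hence $\mathrm{Sym}^{2}(V^{*})$ contains no trivial summand and this subgroup has \emph{no} invariant of degree $2$ at all. The paper covers this copy by the computation showing its minimal invariant degree equals $3$. The gap propagates to type VI(ii): for the extension of the non-rational copy, your application of Lemma~\ref{lemma:normal-extension} starts from a degree-$2$ invariant of the normal subgroup that does not exist; one must instead use the cubic invariant, which you have not established.

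Two smaller points. First, for type I you implicitly need the paper's observation that groups of type I have a \emph{unique} lift to $\SL_7(\C)$: the alternative ``$G$ is a subgroup of $G_7$'' is a statement about $G$ itself, not about some other lift of $\bar{G}$, so the lift-changing reduction does not by itself justify setting this case aside. Second, your deferral of types II and III to a Magma computation is consistent with the paper's own practice, and the computations do come out as you expect: $d_{\min}=2$ in both cases, for each of the non-conjugate copies occurring in type III.
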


\begin{proof}
Recall that changing a lift of $\bar{G}$ to $\SL_7(\C)$ does not
change the~degrees of semi-invariants, which implies that we may
assume that $G$ is one of the~groups listed in
Theorem~\ref{theorem:Feit-7}.

Note that the~groups of type I have a unique lift to $\SL_7(\C)$.

By Lemma~\ref{lemma:invariant-quadric}, we may assume that $G$ is
not conjugate to a~subgroup~in~$\SL_7(\mathbb{Q})$, which implies
that the~subgroup $G$ is not of type IV,  V(i) or VII
(see~\cite{Atlas}).

If the~subgroup $G$ is of type II, III(i) or VI(i) (cf.
Remark~\ref{remark:conjugate-non-unique}), then the~minimal degree
$d_{min}$ of the~invariants of the~subgroup $G$ is given in
the~following table:

\begin{center}
{\renewcommand\arraystretch{1.3}
\begin{tabular}{|c|c|c|c|c|c|}
\hline
$G$ & $\PSL_2(\F_{13})$ & $\PSL_2(\F_8)$ & $\PSU_3(\F_3)$\\
\hline
Type & II & III(i) & VI(i)\\
\hline $d_{min}$ & $2$ & $2$ & $3$\\
\hline
\end{tabular}}
\end{center}

If the subgroup $G$ is a~subgroup of type III(ii), then it has
a~normal subgroup of index $3$ isomorphic~to~$\PSL_2(\F_8)$,
which implies that $G$ has a~semi-invariant of degree~$2$ by
Lemma~\ref{lemma:normal-extension}.

Arguing as in the case of a~subgroup of type III(ii), we see that
the~subgroup $G$ has a~semi-in\-va\-ri\-ant of degree at most $3$
if $G$ is a~quasiprimitive subgroup of type V(ii) or VI(ii).
\end{proof}

\begin{proof}[{Proof of Theorem~\ref{theorem:dim-7}}]
By~\cite[Lemma~2.2.1(xi)]{Miele} (see also \cite[\S1]{MeRa03}),
the group $G_7\subset\SL_7(\mathbb{C})$ has an invariant of degree~$7$.
Thus, Theorem~\ref{theorem:dim-7-our} implies
Theorem~\ref{theorem:dim-7}.
\end{proof}

\appendix

\section{Alpha-invariant}
\label{section:alpha}

Let $(V\ni O)$ be a~germ of a Kawamata log terminal singularity
(see \cite[Definition~3.5]{Ko97}), and let $\pi\colon W\to V$
be~a~birational morphism such that
\begin{itemize}
\item the~exceptional locus of $\pi$ consists of one irreducible divisor $E\subset W$ such that $O\in\pi(E)$,%

\item the~log pair $(W,E)$ has purely log terminal singularities (see \cite[Definition~3.5]{Ko97}),%

\item the~divisor $-E$ is a~$\pi$-ample $\mathbb{Q}$-Cartier divisor.%
\end{itemize}

\begin{theorem}
\label{theorem:plt-blow-up} The~birational morphism $\pi\colon
W\to V$ does exist.
\end{theorem}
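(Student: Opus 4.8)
The assertion is the standard existence of a \emph{purely log terminal blow-up} of the Kawamata log terminal germ $(V\ni O)$, and the plan is to deduce it from the minimal model program for klt pairs. The strategy has three stages: first produce a single divisorial valuation $E$ over $V$ with center $O$ that is a log canonical place of a suitable auxiliary boundary; then extract exactly this divisor by a relative minimal model program; and finally read off the three required properties from the construction.

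For the first stage I would start from a log resolution $f\colon Y\to V$ that is an isomorphism over $V\setminus\{O\}$ and whose reduced exceptional divisor $\sum_i E_i$ has simple normal crossings. Choosing a general effective $\mathbb{Q}$-Cartier divisor $H$ vanishing to high order at $O$ and setting $t=\mathrm{lct}_O(V,H)$, the pair $(V,tH)$ is log canonical but not Kawamata log terminal near $O$, so it has a log canonical place (a divisorial valuation) whose center contains $O$. After adding general hyperplane sections through $O$ I may assume that $O$ itself is a minimal log canonical center. The delicate point is then to arrange a \emph{unique} log canonical place over $O$: this I would achieve by the standard tie-breaking perturbation (adding a small general effective boundary and rescaling), which singles out one prime divisor $E$ over $V$, centered at $O$, as the only place of log discrepancy $0$ for the resulting boundary $\Delta$, with $(V,\Delta)$ log canonical.

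For the second stage, since $E$ is now the unique log canonical place of the klt--lc pair $(V,\Delta)$ over $O$, I would extract it by a relative MMP. Concretely, after further blowing up so that $E$ appears as a divisor on $Y$, I would run the $\big(K_Y+f^{-1}_{*}\Delta+\sum_{E_i\neq E}E_i\big)$-MMP over $V$; by the existence of minimal models for klt pairs this terminates, contracting every exceptional prime divisor except $E$ and producing a projective birational morphism $\pi\colon W\to V$ with $W$ normal and $\mathbb{Q}$-factorial, $\mathrm{Exc}(\pi)=E$ irreducible, $O\in\pi(E)$, and $-E$ being $\pi$-ample (the divisor $E$ is $\pi$-negative at the end of the program).

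Finally I would verify the three conditions. Writing $K_W+E\qlin\pi^{*}\big(K_V+\Delta\big)+\sum_j d_j F_j$ with the $F_j$ exceptional over $W$, the fact that $E$ was the unique log canonical place of $(V,\Delta)$ over $O$ forces $d_j>0$ for every such $F_j$, so the log discrepancy of each divisor other than $E$ with respect to $(W,E)$ is positive; hence $(W,E)$ is purely log terminal. The divisor $-E$ is $\pi$-ample and $\mathbb{Q}$-Cartier by the output of the program, and $O\in\pi(E)$ by construction. I expect the main obstacle to be precisely the tie-breaking step of the first stage---guaranteeing a single log canonical place over $O$---together with ensuring that the relative MMP contracts exactly the unwanted divisors and leaves $E$ anti-ample; both are handled by general-position choices of $H$ and the hyperplane sections, and by the existence of klt minimal models, which is the essential input.
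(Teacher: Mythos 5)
Your strategy --- tie-breaking to produce a boundary $\Delta$ on $V$ whose unique log canonical place is a divisorial valuation $E$ with center $O$, then extracting exactly $E$ by a relative MMP --- is precisely the construction carried out in the references that the paper cites for this theorem (Prokhorov's Proposition~2.9 and Kudryavtsev's Theorem~1.5, made unconditional by BCHM); the paper itself gives no further details. But your last step has a genuine gap: the MMP does not terminate with $-E$ being $\pi$-ample, and the parenthetical justification ``$E$ is $\pi$-negative at the end of the program'' is not a proof. With your choice of boundary one has
$$
K_{Y}+f^{-1}_{*}\Delta+\sum_{E_{i}\neq E}E_{i}\ \equiv_{V}\ \sum_{E_{i}\neq E}c_{i}E_{i}-E,
$$
with every $c_{i}>0$ because $E$ is the unique log canonical place of $(V,\Delta)$. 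When the relative MMP terminates, the pushforward of the right-hand side is nef over $V$; the negativity lemma then forces every $E_{i}\neq E$ to be contracted, and what remains is that $-E$ is \emph{nef} over $V$ --- not ample. (If instead you put $E$ into the boundary, the output satisfies only $K_{W}+\Delta_{W}+E\equiv_{V}0$, which gives no positivity of $-E$ whatsoever.) To finish one needs three further steps: apply the relative base-point-free theorem to $-E$ (valid since $-E\equiv_{V}K_{W}+\Delta_{W}$ with $(W,\Delta_{W})$ Kawamata log terminal, and $-E$ is big over $V$ because $\pi$ is birational) to conclude that $-E$ is semiample over $V$; pass to the relative ample model $g\colon W\to W'$; and show that $g$ does not contract $E$. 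The last point requires an argument, not bookkeeping: if $\dim g(E)<\dim E$, then $g_{*}E=0$ as a divisor class, while $-E\qlin g^{*}A$ for some divisor $A$ on $W'$ ample over $V$, so the projection formula gives $A=g_{*}(-E)=0$; a numerically trivial divisor ample over $V$ forces $W'=V$, hence $E\equiv_{V}0$, contradicting the negativity lemma. Only after these steps do you obtain a morphism with irreducible exceptional divisor whose negative is $\pi$-ample and $\mathbb{Q}$-Cartier.

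A smaller but real inaccuracy occurs in your verification of plt-ness: you write $K_{W}+E\qlin\pi^{*}(K_{V}+\Delta)+\sum_{j}d_{j}F_{j}$ and assert $d_{j}>0$ for all $F_{j}$, but as written this fails, since the strict transform $\Delta_{W}$ of $\Delta$ appears on the right-hand side with negative coefficients. The correct route is the crepant identity $K_{W}+\Delta_{W}+E=\pi^{*}(K_{V}+\Delta)$: because $E$ is the unique log canonical place of $(V,\Delta)$, the pair $(W,\Delta_{W}+E)$ is purely log terminal, and then $(W,E)$ is purely log terminal because removing the effective boundary $\Delta_{W}$ only increases log discrepancies. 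Both repairs are standard, and with them your argument becomes the proof contained in the literature the paper cites; but as written, the $\pi$-ampleness of $-E$ --- one of the three defining properties of the plt blow-up --- is not established.
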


\begin{proof}
The~required assertion follows from
\cite[Proposition~2.9]{Pr98plt}, \cite[Theorem~1.5]{Kud01} and
\cite{BCHM06}.
\end{proof}

The~existence of $\pi$ is obvious if $(V\ni O)$ is a~quotient
singularity (see \cite[Remark~3.15]{ChSh09}) or an isolated
quasi-homogeneous hypersurface singularity.

\begin{definition}[{\cite[Definition~2.1]{Pr98plt}}]
\label{definition:plt-blow-up} We say that $\pi$ is a~plt blow up
of the~germ~$(V\ni O)$.
\end{definition}

\begin{definition}[{\cite[Definition~4.1]{Pr98plt}}]
\label{definition:weakly-exceptional} We say that $(V\ni O)$ is
weakly-exceptional if $\pi$ is unique.
\end{definition}

The~goal of this appendix is to define an~invariant $\alpha(V\ni
O)\in\mathbb{R}$ of the~singularity~$(V\ni O)$, which is a~local
analogue of the~$\alpha$-invariant introduced in \cite{Ti87} and
\cite{TiYa87}.

\begin{lemma}[{see \cite[Theorem~4.9]{Pr98plt}}]
\label{lemma:exceptional} If  $(V\ni O)$ is exceptional,
then $\pi(E)=O$.
\end{lemma}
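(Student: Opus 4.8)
The plan is to prove the contrapositive: assuming $\dim\pi(E)\geqslant 1$ (recall that $O\in\pi(E)$ and that $\pi(E)$ is irreducible, being the~image of the~irreducible divisor $E$), I will construct an~effective $\Q$-Cartier $\Q$-divisor $\Gamma$ on $V$ with $\mu_{O}(V,\Gamma)\geqslant 2$, contradicting exceptionality through Definition~\ref{definition:exceptional}. Write $Z=\pi(E)$, so that $O\in Z$ and $\dim Z\geqslant 1$. Since the~pair $(W,E)$ is purely log terminal, one has $K_{W}\qlin\pi^{*}(K_{V})+aE$ with $a>-1$, so for the~empty boundary $E$ is klt over $V$; the~first task is therefore to correct its discrepancy by an~auxiliary boundary so that $E$ becomes a~log canonical place.

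First I would exploit the~$\pi$-ampleness of $-E$. Choose $m\gg 0$ so that $-mE$ is Cartier and $\pi$-very ample, let $M_{W}$ be a~general member of the~relative linear system $|-mE|$, and set $M=\pi_{*}(M_{W})$ (which is $\Q$-Cartier when $V$ is $\Q$-factorial, as in the~quotient case of interest). Then $M_{W}$ is the~proper transform $\bar{M}$, one has $\pi^{*}(M)=\bar{M}+mE$, and for $B=\tfrac{a+1}{m}M$ a~direct computation gives
$$
K_{W}+\bar{B}\qlin\pi^{*}\big(K_{V}+B\big)+\big(a-(a+1)\big)E=\pi^{*}\big(K_{V}+B\big)-E.
$$
Thus $E$ is a~log canonical place of $(V,B)$ of discrepancy $-1$, realizing $\mathrm{c}_{O}(V,B)=1$; for $m\gg 0$ the~coefficient $\tfrac{a+1}{m}$ of $M$ is smaller than $1$, so $M$ contributes nothing below $1$ to the~threshold.

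Next I would add a~general hyperplane section $H\ni O$ (a~general member of a~very ample system through $O$) and put $\Gamma=B+H$. Because $\dim Z\geqslant 1$, a~general such $H$ does not contain $Z$, so the~generic point of $E$ maps outside $H$ and $\mathrm{mult}_{E}\pi^{*}(H)=0$; hence $\pi^{*}(H)=\bar{H}$, the~divisor $E$ retains discrepancy $-1$ for $\Gamma$, and $\bar{H}\cap E=\pi^{-1}(H\cap Z)\cap E$ dominates $H\cap Z\ni O$. The~coefficient of $H$ in $\Gamma$ is $1$, so $\bar{H}$ realizes the~threshold $\mathrm{c}_{O}(V,\Gamma)=1$, and $E$ realizes the~same threshold. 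Taking $\alpha=1$ with the~component $\bar{H}$ and $\beta=1$ with the~divisor $E$ in Definition~\ref{definition:lct-mult}, and noting $O\in\pi(\bar{H}\cap E)$, yields $\mu_{O}(V,\Gamma)\geqslant\alpha+\beta=2$.

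The~main obstacle is to guarantee that $\Gamma$ is genuinely log canonical near $O$ with $\mathrm{c}_{O}(V,\Gamma)=1$, that is, that the~construction creates no extra place of discrepancy below $-1$; otherwise $\bar{H}$ would fail to realize the~true (smaller) threshold and the~count of $\mu_{O}$ would collapse. I expect to settle this by generality: since $(W,E)$ is plt the~divisor $E$ is normal, and for general $M_{W}\in|-mE|$ and general $H$ the~divisor $E+\tfrac{a+1}{m}\bar{M}+\bar{H}$ has simple normal crossings over a~neighbourhood of $Z$ outside a~locus of high codimension, so the~pushed-down pair $(V,\Gamma)$ is log canonical with $E$ and $\bar{H}$ as its only log canonical places over $O$. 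Granting this, exceptionality is contradicted, which forces $\dim\pi(E)=0$ and hence $\pi(E)=O$.
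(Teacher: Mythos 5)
Your Step 1 is essentially correct, and in fact you do not need the $\mathbb{Q}$-factoriality hedge: since $M_{W}\in|-mE|$ and $-mE$ is Cartier, the divisor $M_{W}+mE$ is the divisor of a rational function $f\in\mathbb{C}(W)=\mathbb{C}(V)$, so $M=\pi_{*}(M_{W})=\mathrm{div}_{V}(f)$ is automatically Cartier; together with Bertini applied to the $\pi$-free system $|-mE|$ this gives a pair $(V,B)$ that is log canonical near $O$ with $E$ as its unique log canonical place and $Z=\pi(E)$ as its unique log canonical centre. The genuine gap is exactly the point you flag as ``the main obstacle'' and then dismiss by generality: the claim that for a general hyperplane section $H\ni O$ the pair $(V,B+H)$ is still log canonical at $O$ with both $E$ and $\bar{H}$ computing the threshold $1$. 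Generality cannot rescue this, because every member of the linear system you are allowed to use passes through $O$, so Bertini says nothing over $O$. Concretely, by inversion of adjunction the log canonicity of $(W,E+\bar{B}+\pi^{*}(H))$ near $\pi^{-1}(O)$ is equivalent to that of $(E,\mathrm{Diff}_{E}(\bar{B})+(\pi\vert_{E})^{*}(H\vert_{Z}))$, and the fibre of $\pi\vert_{E}$ over $O$ may occur in $(\pi\vert_{E})^{*}(H\vert_{Z})$ with multiplicity at least $2$, which kills log canonicity for \emph{every} choice of $H$. This really happens: take $V=\mathbb{C}^{3}/\mathbb{Z}_{2}$ with the action $(t,x,y)\mapsto(-t,y,x)$ (so $V$ is the threefold $A_{1}\times\mathbb{C}$ singularity), and let $\pi\colon W\to V$ be the $\mathbb{Z}_{2}$-quotient of the blow-up of the $t$-axis; this is a plt blow-up whose centre $Z$ is a curve through $O$, and $E\to Z$ has a fibre of multiplicity $2$ over $O$ (the invariant functions restricted to the exceptional divisor upstairs are functions of $t^{2}$). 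Hence for every hyperplane section $H\ni O$ with $Z\not\subset H$ one gets $\pi^{*}(H)\vert_{E}\geqslant 2F_{O}$, the pair $(V,B+H)$ is not log canonical at $O$, the threshold $\mathrm{c}_{O}(V,B+H)$ drops below $1$, it is computed by neither $E$ nor $\bar{H}$, and your count $\mu_{O}\geqslant 2$ collapses --- precisely the failure mode you anticipated.

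The standard repair (and, in essence, the argument behind \cite[Theorem~4.9]{Pr98plt}, which is all the paper itself offers for Lemma~\ref{lemma:exceptional}) is a tie-breaking rescaling rather than coefficient $1$: let $c\in(0,1]$ be the log canonical threshold of $H$ with respect to $(V,B)$ near $O$ (it is positive because the unique log canonical place $E$ of $(V,B)$ has centre $Z\not\subset H$, and at most $1$ up to replacing $H$). Then $(V,B+cH)$ is log canonical near $O$, the divisor $E$ is still a log canonical place (its discrepancy is unchanged since $\mathrm{mult}_{E}\pi^{*}(H)=0$), and any divisor $F_{0}$ computing $c$ is a \emph{second} log canonical place with centre containing $O$ and distinct from $E$. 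This contradicts exceptionality in the form of \cite[Definition~2.5]{MarPr99} (at most one divisor of discrepancy $-1$ for any log canonical boundary), which the paper asserts is equivalent to Definition~\ref{definition:exceptional}; note that passing from ``two log canonical places with centre through $O$'' to the literal inequality $\mu_{O}\geqslant 2$ of Definition~\ref{definition:lct-mult} is itself not a formality (the two places must be made to meet on a log resolution over $O$), so invoking that equivalence, or a further perturbation plus connectedness argument, is an unavoidable ingredient that your write-up is missing.
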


\begin{lemma}[{\cite[Corollary~1.7]{Kud01},\cite{BCHM06}}]
\label{lemma:weakly-exceptional} If  $(V\ni O)$ is
weakly-exceptional, then $\pi(E)=O$.
\end{lemma}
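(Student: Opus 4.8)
The final statement to prove is Lemma~\ref{lemma:weakly-exceptional}: if $(V\ni O)$ is weakly-exceptional, then $\pi(E)=O$. Here $\pi\colon W\to V$ is a plt blow up, and weak-exceptionality means this plt blow up is unique.

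\medskip

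The plan is to argue by contradiction: suppose $\pi(E)\ne O$. Since $O\in\pi(E)$ by the defining property of a plt blow up, the image $\pi(E)$ is then a positive-dimensional irreducible subvariety of $V$ passing through $O$. The strategy is to produce a \emph{second}, genuinely different plt blow up of the germ $(V\ni O)$, contradicting uniqueness. First I would localize the problem at the generic point of $\pi(E)$, or alternatively cut $V$ by general hyperplane sections through $O$ to reduce the dimension of $\pi(E)$ to zero while preserving the Kawamata log terminal nature of the singularity and the plt condition on the exceptional divisor. The point is that a plt blow up extracting a divisor whose center is not the point $O$ itself reflects a ``non-isolated'' feature of the log canonical structure, and one expects that along $\pi(E)$ one can find room to extract a distinct divisor.

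\medskip

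The cleanest route is likely to invoke the same machinery that produces plt blow ups in the first place, namely \cite[Corollary~1.7]{Kud01} together with the minimal model program results of \cite{BCHM06}, which are already cited in the statement of this very lemma. The idea is that these references give not just the existence but a classification or well-definedness of the divisors that can be extracted by a plt blow up, and in particular show that the center of the extracted divisor is forced to be $O$ precisely when the blow up is unique. Concretely, I would run a suitable relative MMP over $V$ to extract a divisor $E'$ with center strictly contained in the closure of $\pi(E)$ but still passing through $O$; by general position and the plt hypothesis, $E'$ and $E$ would be distinct $\pi$-exceptional divisors each giving a plt blow up, contradicting weak-exceptionality. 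This mirrors the logic of Lemma~\ref{lemma:exceptional}, whose proof via \cite[Theorem~4.9]{Pr98plt} handles the analogous statement for exceptional (rather than weakly-exceptional) germs.

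\medskip

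The main obstacle I anticipate is the reduction step: showing that if $\pi(E)$ has positive dimension, one really can build two \emph{different} plt blow ups rather than recovering the same one up to isomorphism. One must carefully track the plt condition on $(W,E)$ and the $\pi$-ampleness of $-E$ under whatever modification or hyperplane-section argument is used, since these are exactly the properties that characterize a plt blow up and must be preserved for the constructed $E'$. Verifying that the newly extracted divisor $E'$ is not equal to (a transform of) $E$—i.e.\ that the two blow ups are genuinely inequivalent as models over $V$—is the delicate heart of the argument, and I expect it to rely on the finiteness and termination results of \cite{BCHM06} to guarantee that the MMP producing $E'$ terminates with a divisorial extraction distinct from $\pi$.
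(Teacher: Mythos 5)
The paper offers no argument for this lemma at all: its ``proof'' consists of the bracketed citations \cite[Corollary~1.7]{Kud01} and \cite{BCHM06}, so at the level of substance your proposal --- which ultimately defers to exactly those two references --- follows the same route as the paper. Your surrounding sketch, however, organizes the argument less efficiently than it could and misplaces the difficulty. The clean argument is direct, not by contradiction: the constructions behind Theorem~\ref{theorem:plt-blow-up} (that is, \cite[Theorem~1.5]{Kud01} together with the MMP techniques of \cite{BCHM06}) in fact produce, for \emph{every} Kawamata log terminal germ, a plt blow up $\sigma\colon W'\to V$ whose exceptional divisor $E'$ has center exactly the point, i.e.\ $\sigma(E')=O$ (a so-called Koll\'ar component). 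If $(V\ni O)$ is weakly-exceptional, then by Definition~\ref{definition:weakly-exceptional} the plt blow up is unique, so $\pi$ must coincide with $\sigma$, whence $\pi(E)=O$. In particular, the step you single out as ``the delicate heart'' --- verifying that $E'$ is genuinely different from $E$ when $\pi(E)\ne O$ --- is actually automatic: two plt blow ups extracting divisorial valuations with different centers on $V$ can never be isomorphic as models over $V$. The real content, which your sketch leaves vague (``run a suitable relative MMP\ldots''), is precisely the existence of a plt extraction centered at $O$; that is what the cited results supply, and without it your argument has no starting point. Finally, the localization and hyperplane-section reduction you propose should be dropped: cutting the germ by general hyperplanes through $O$ changes the singularity, and there is no reason the plt condition on $(W,E)$ or the $\pi$-ampleness of $-E$ survives such a restriction; none of this is needed once the existence statement above is in hand.
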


If $\pi(E)\ne O$, then we put $\alpha(V\ni O)=0$. Suppose, in
addition, that $\pi(E)=O$.

Denote by $R_{1}, \ldots,R_{s}$ the irreducible components of the~locus
$\mathrm{Sing}(W)$ such that~\mbox{$\mathrm{dim}(R_{i})=\mathrm{dim}(V)-2$}
and $R_{i}\subset E$ for every $i\in\{1,\ldots,s\}$. Put
$$
\Delta=\sum_{i=1}^{s}\frac{m_{i}-1}{m_{i}}R_{i},
$$
where $m_{i}$ is the~smallest positive integer such that $m_{i}E$
is~Cartier at a~general point of the~subvariety $R_{i}\subset E$.
(One has $\Delta=\mathrm{Diff}_{E}(0)$ in the~notation of the~paper
\cite{Pr98plt}.)

\begin{lemma}[{\cite[Theorem~7.5]{Ko97}}]
\label{lemma:plt-log-Fano} The variety $E$ is normal, and
the~log pair \mbox{$(E,\mathrm{Diff}_{E}(0))$} is Kawamata log terminal.
\end{lemma}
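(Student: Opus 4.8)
The plan is to obtain both assertions from adjunction along $E$, following \cite[Theorem~7.5]{Ko97}. By Definition~\ref{definition:plt-blow-up} the~pair $(W,E)$ is purely log terminal, and $E$ is irreducible, hence a~prime divisor. Moreover $K_{W}+E$ is $\Q$-Cartier since $(W,E)$ is plt, while $E$ is $\Q$-Cartier by hypothesis (as $-E$ is), so $K_{W}=(K_{W}+E)-E$ is $\Q$-Cartier as well. Thus adjunction is available, producing a~$\Q$-divisor $\mathrm{Diff}_{E}(0)$ on the~normalization $E^{\nu}$ of $E$ with
$$
\big(K_{W}+E\big)\big\vert_{E^{\nu}}\qlin K_{E^{\nu}}+\mathrm{Diff}_{E}(0).
$$

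First I would prove that $E$ is normal. The~key input is the~Connectedness Lemma of Shokurov and Koll\'ar (see \cite[\S 17]{Ko97}): since $(W,E)$ is plt, near $E$ there are no centers of log canonical singularities other than $E$ itself, and in particular over each point of $E$ the~exceptional locus of a~log resolution is connected. Combined with Serre's criterion this forces $E$ to be regular in codimension one and to satisfy the~condition $S_{2}$, so $E=E^{\nu}$ is normal. Normality also identifies $\mathrm{Diff}_{E}(0)$ with the~divisor $\Delta=\sum_{i=1}^{s}\frac{m_{i}-1}{m_{i}}R_{i}$ fixed above, via the~standard local computation of the~different at a~general point of each codimension-two stratum $R_{i}\subset\mathrm{Sing}(W)\cap E$, where $W$ is analytically a~cyclic quotient of Cartier index $m_{i}$ and the~coefficient of $R_{i}$ equals $(m_{i}-1)/m_{i}$.

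Second, granting normality, I would deduce that $(E,\mathrm{Diff}_{E}(0))$ is Kawamata log terminal by comparing discrepancies. Every divisor over $E$ arises by restriction from a~divisor over $W$ meeting $E$, and adjunction relates the~discrepancy of such a~divisor with respect to $(E,\mathrm{Diff}_{E}(0))$ to its discrepancy with respect to $(W,E)$; since the~latter is strictly greater than $-1$ by plt-ness, so is the~former, which is precisely the~klt condition for $(E,\mathrm{Diff}_{E}(0))$. In particular this also shows that the~coefficients of $\mathrm{Diff}_{E}(0)$ lie in $[0,1)$, consistent with the~formula for $\Delta$.

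The~hard part is the~normality of $E$: establishing the~condition $S_{2}$ relies on the~vanishing packaged into the~Connectedness Lemma rather than on a~direct local argument, and it is this step that genuinely uses plt-ness, as opposed to mere log canonicity, where $E$ may fail to be normal. Once normality is in hand, the~identification of the~different and the~discrepancy comparison are routine.
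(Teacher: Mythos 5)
The paper contains no proof of this lemma at all: it is quoted directly from Koll\'ar's adjunction theorem \cite[Theorem~7.5]{Ko97}, so your proposal has to be measured against Koll\'ar's own argument. Your overall strategy is indeed that argument: adjunction to define the different, the Shokurov--Koll\'ar connectedness theorem for normality of $E$, and a discrepancy comparison for the klt statement. The klt step and the identification of $\mathrm{Diff}_{E}(0)$ with $\Delta$ are the standard ``easy direction'' of adjunction and are fine, except that your claim that \emph{every} divisor over $E$ is the restriction of a divisor over $W$ is more than you need and harder than you think; it is cleaner to check the klt condition on a single log resolution $f\colon Y\to W$ of $(W,E)$, on which the strict transform $\bar{E}$ resolves $E$ and the discrepancy bookkeeping is explicit.

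The genuine gap is in your normality step. What the connectedness theorem gives is that the non-klt locus of $f$, which by plt-ness is exactly $\bar{E}$, meets every fiber of $f$ over $E$ in a connected set, i.e.\ $f|_{\bar{E}}\colon\bar{E}\to E$ has connected fibers. This only shows that the normalization $\nu\colon E^{\nu}\to E$ is a finite birational \emph{bijection}, which does not imply normality (the normalization of a cuspidal cubic is a bijection). Nor can Serre's criterion be ``combined'' with connectedness as you state: $R_{1}$ and $S_{2}$ are the \emph{hypotheses} of Serre's criterion, not consequences of connectedness of fibers, and $S_{2}$ for a Weil divisor that is merely $\Q$-Cartier in a Cohen--Macaulay ambient variety is precisely what can fail in general. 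The actual proof extracts more than connectedness from the underlying vanishing: writing $K_{Y}+\bar{E}\qlin f^{*}\big(K_{W}+E\big)+A$ with $A=\sum a_{i}E_{i}$ $f$-exceptional and all $a_{i}>-1$, relative Kawamata--Viehweg vanishing (note that $f$ is birational, so relative bigness is automatic) gives $R^{1}f_{*}\mathcal{O}_{Y}\big(\lceil A\rceil-\bar{E}\big)=0$, hence a surjection $\mathcal{O}_{W}=f_{*}\mathcal{O}_{Y}\big(\lceil A\rceil\big)\twoheadrightarrow \big(f|_{\bar{E}}\big)_{*}\mathcal{O}_{\bar{E}}\big(\lceil A\rceil|_{\bar{E}}\big)$, which factors through $\mathcal{O}_{E}$. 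Since
$\mathcal{O}_{E}\subseteq\nu_{*}\mathcal{O}_{E^{\nu}}=\big(f|_{\bar{E}}\big)_{*}\mathcal{O}_{\bar{E}}\subseteq\big(f|_{\bar{E}}\big)_{*}\mathcal{O}_{\bar{E}}\big(\lceil A\rceil|_{\bar{E}}\big)$,
surjectivity of the composite forces $\mathcal{O}_{E}=\nu_{*}\mathcal{O}_{E^{\nu}}$, i.e.\ $E$ is normal --- directly, with no detour through $R_{1}$, $S_{2}$ or Serre's criterion. Your closing remark that $S_{2}$ ``relies on the vanishing packaged into the Connectedness Lemma'' points at the right engine, but as written the argument does not close.
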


The~log pair $(E,\Delta)$ is a~log Fano variety, i.e. the~divisor
$-(K_{E}+\Delta)$ is ample. Indeed, the~divisor $-E$ is
$\pi$-ample, and
$$
K_{E}+\Delta\qlin \Big(K_{W}+E\Big)\Big\vert_{E}\qlin
\Big(\pi^{*}\big(K_{V}\big)+
\big(1+a\big)E\Big)\Big\vert_{E}.%
$$
Moreover, one has $a>-1$, because $V$ has Kawamata log terminal
singularities. Put
$$
\mathrm{lct}\Big(E, \Delta\Big)=\mathrm{sup}\left\{\lambda\in\mathbb{Q}\ \left|%
\aligned
&\text{the~log pair}\ \left(E, \Delta+\lambda D\right)\ \text{is log canonical}\\
&\text{for any effective $\mathbb{Q}$-divisor}\ D\qlin -\Big(K_{E}+\Delta\Big)\\
\endaligned\right.\right\}.%
$$

\begin{theorem}[{\cite[Theorem~2.1]{Kud01}}]
\label{theorem:weakly-exceptional-criterion} The singularity
$(V\ni O)$ is weakly-exceptional if and only if
the~inequality $\mathrm{lct}(E, \Delta)\geqslant 1$ holds.
\end{theorem}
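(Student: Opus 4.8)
The plan is to establish the two implications through the dictionary between plt blow-ups of the germ $(V\ni O)$ and divisorial log canonical places computed by adjunction on the log Fano pair $(E,\Delta)$. Throughout I write $a=a(E,V)>-1$ for the plt discrepancy, so that $K_W+E\qlin\pi^{*}(K_V)+(1+a)E$ and $(K_W+E)\vert_E\qlin K_E+\Delta$ by Lemma~\ref{lemma:plt-log-Fano}; since $-E$ is $\pi$-ample, restriction to $E$ identifies $-(K_E+\Delta)$ with an ample class, which is precisely what makes $\mathrm{lct}(E,\Delta)$ the relevant quantity.

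First I would prove the contrapositive of the ``only if'' direction: assuming $\mathrm{lct}(E,\Delta)=\lambda_0<1$, I would produce a second plt blow-up, so that $(V\ni O)$ is not weakly-exceptional. By definition of the threshold there is an effective $D\qlin-(K_E+\Delta)$ for which $(E,\Delta+\lambda_0 D)$ is strictly log canonical, hence has a log canonical center $Z\subsetneq E$. The point is to transport $Z$ upward. Using that $-E$ is $\pi$-ample I would spread $D$ out to an effective $\mathbb{Q}$-divisor $D_W$ on $W$ restricting to $D$ on $E$, push it to a boundary $B=\pi_{*}(D_W)$ on $V$, and arrange, via a tie-breaking argument in the spirit of Lemma~\ref{lemma:Kawamata-Shokurov-trick}, that $(V,B)$ is log canonical with $E$ among its log canonical places. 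Inversion of adjunction then forces a log canonical center of $(V,B)$ sitting over $Z$ but strictly inside $E$; extracting a divisor over that center yields a divisorial valuation of discrepancy $-1$ distinct from $E$, and Theorem~\ref{theorem:plt-blow-up} together with the plt bookkeeping promotes it to a genuine second plt blow-up.

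For the converse I would argue again by contraposition: if $(V\ni O)$ is not weakly-exceptional there is a second plt blow-up $\pi'\colon W'\to V$ with exceptional divisor $E'$, giving a divisorial valuation of discrepancy $-1$ over $O$ whose center on $W$ meets $E$ in a proper subvariety $Z$. Running the same adjunction formula in reverse, the trace of a boundary realizing $E'$ restricts on $E$ to an effective $D\qlin-(K_E+\Delta)$ for which $(E,\Delta+D)$ acquires $Z$ as a non-klt center, so that $(E,\Delta+\lambda D)$ fails to be log canonical for some $\lambda<1$ and hence $\mathrm{lct}(E,\Delta)<1$.

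The hard part will be the two passages through inversion of adjunction: one must guarantee that the log canonical center produced on $E$ lifts to a divisor over $O$ that again satisfies all three conditions in the definition of a plt blow-up (a single exceptional divisor, the plt condition, and $-E'$ being $\pi'$-ample), rather than merely to some log canonical place. Controlling the plt condition, equivalently ensuring the extracted valuation is the unique one of discrepancy $-1$ for the auxiliary boundary, is where the bulk of the work lies, and it is exactly here that Theorem~\ref{theorem:Kawamata} (normality and rational singularities of minimal centers) and the minimal-center machinery of Lemma~\ref{lemma:centers} are needed to keep the geometry under control.
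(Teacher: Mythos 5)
First, a point of reference: the paper does not prove this statement at all — it is quoted from \cite[Theorem~2.1]{Kud01} — so your attempt can only be measured against the standard argument (essentially Kudryavtsev's), whose skeleton you have correctly identified: lift divisors between $E$ and $V$ through the plt blow-up using $\pi$-ampleness of $-E$, apply inversion of adjunction, and extract log canonical places via the MMP machinery behind Theorem~\ref{theorem:plt-blow-up}. However, each of your two implications has a genuine gap at the decisive step. In the first implication you arrange $(V,B)$ to be log canonical with $E$ \emph{among} its lc places and then want to extract a divisor over the extra center. But the extraction results available here (\cite[Proposition~2.9]{Pr98plt}, \cite[Theorem~1.5]{Kud01}, i.e.\ what stands behind Theorem~\ref{theorem:plt-blow-up}) produce a plt blow-up realizing \emph{some} lc place of the auxiliary pair, not a prescribed one; since $E$ itself is still an lc place of $(V,B)$, the output may simply be $\pi$ again, proving nothing. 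Nor can tie-breaking in the spirit of Lemma~\ref{lemma:Kawamata-Shokurov-trick}, or the minimal-center results Theorem~\ref{theorem:Kawamata} and Lemma~\ref{lemma:centers}, separate the two competing places: both $E$ and the new place have center $O$ on $V$ (recall $\pi(E)=O$), so center-based perturbation cannot distinguish them. The standard remedy is to \emph{drop} $E$ from the lc places: first upgrade $\mathrm{lct}(E,\Delta)<1$ to the existence of an effective $D\qlin -(K_E+\Delta)$ with $(E,\Delta+D)$ \emph{not} log canonical (discrepancies are affine in the coefficient, so log canonicity is closed in $\lambda$), lift this $D$ so that the resulting pair $(V,B)$ is non-lc, and pass to the threshold $c<1$; writing $a=a(E,V)>-1$, one computes $a(E,V,cB)=a(1-c)-c>-1$, so every lc place of the strictly lc pair $(V,cB)$ automatically differs from $E$, and extraction now genuinely yields a second plt blow-up.

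The gap in the second implication is an outright false inference: from a non-klt center $Z$ of $(E,\Delta+D)$ you conclude that ``$(E,\Delta+\lambda D)$ fails to be log canonical for some $\lambda<1$.'' If $(E,\Delta+D)$ is non-klt but log canonical, then $(E,\Delta+\lambda D)$ is \emph{klt} for every $\lambda<1$, because $(E,\Delta)$ is klt by Lemma~\ref{lemma:plt-log-Fano} and discrepancies are affine in $\lambda$; your argument therefore only gives $\mathrm{lct}(E,\Delta)\leqslant 1$, which is perfectly compatible with weak exceptionality and does not close the contrapositive. To get the strict inequality you must exploit the strictness $d:=-a(E,V,B')<1$, where $B'$ is a boundary on $V$ having the second exceptional valuation $E'$ as its \emph{unique} lc place: the crepant restriction to $E$ then reads $D_0+A\qlin -(K_E+\Delta)$, where $D_0$ is the trace of the strict transform of $B'$ (non-klt by inversion of adjunction) and $A$ is an effective representative of the \emph{ample} class $(1-d)\left(-E\vert_E\right)$ with $1-d>0$. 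Since $(E,\Delta)$ is klt, every non-klt center of $(E,\Delta+D_0)$ lies in $\mathrm{Supp}(D_0)$; hence replacing $D_0+A$ by $(1+\epsilon)D_0+A_\epsilon$ with $A_\epsilon\qlin A-\epsilon D_0$ effective (possible for small $\epsilon$ by ampleness of $A$) produces a divisor in the class $-(K_E+\Delta)$ whose pair is \emph{not} log canonical, whence $\mathrm{lct}(E,\Delta)<1$. Without this redistribution step, and without the threshold trick above, neither direction of the equivalence is actually established.
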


Note that the~real number $\mathrm{lct}(E, \Delta)$ is
an~algebraic counter-part of the~so-called $\alpha$-invariant
introduced in \cite{Ti87} and \cite{TiYa87}
(cf.~\cite[Theorem~A.3]{ChSh08c}). Put
$$
\alpha\big(V\ni O\big)=\left\{\aligned
&\mathrm{lct}\big(E, \Delta\big)\ \text{if}\ \mathrm{lct}\big(E, \Delta\big)\geqslant 1,\\
&0\ \text{if}\ \mathrm{lct}\big(E, \Delta\big)<1.\\
\endaligned
\right.
$$

\begin{definition}
\label{definition:alpha-invariant} We say that $\alpha(V\ni O)$ is
the~alpha-invariant of the~singularity $(V\ni O)$.
\end{definition}

Note that $\alpha(V\ni O)\ne 0$ $\iff$ $\alpha(V\ni O)\geqslant 1$
$\iff$ $(V\ni O)$ is weakly-exceptional.

\begin{example}[{\cite[Lemma~5.2]{ChShPa08}}]
\label{example:alpha-invariant} Suppose that $(V\ni O)$ is
an isolated quasi-homogeneous hypersurface singularity
$$
z^2t+yt^2+xy^4+x^8z=0\subset\mathbb{C}^{4}\cong\mathrm{Spec}\Big(\mathbb{C}\big[x,y,z,t\big]\Big),%
$$
where $O\in V$ is given by $x=y=z=t=0$. Then $\alpha(V\ni
O)=33/4$.
\end{example}

\begin{theorem}[{\cite[Theorem~4.9]{Pr98plt}}]
\label{theorem:exceptional-criterion} The
the~singularity $(V\ni O)$ is exceptional if and only if
for every effective $\mathbb{Q}$-divisor $D$ on the~variety
$E$ such that~\mbox{$D\qlin -(K_{E}+\Delta)$}
the~log pair $(E,\Delta+D)$ has Kawamata log terminal
singularities.
\end{theorem}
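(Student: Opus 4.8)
The plan is to transfer the exceptionality of $(V\ni O)$ — a condition on the log canonical multiplicity $\mu_O$ of \emph{all} boundaries on $V$ (Definition~\ref{definition:exceptional} and Definition~\ref{definition:lct-mult}) — into a statement about log canonical centres of adjoint boundaries on the exceptional divisor $E$ of the plt blow up $\pi\colon W\to V$, which exists by Theorem~\ref{theorem:plt-blow-up}. The bridge is adjunction along $E$. Since $-E$ is $\pi$-ample and $\pi(E)=O$, every $\mathbb{Q}$-Cartier divisor on $V$ restricts to $\mathbb{Q}$-linear triviality on the fibre $E$; hence if $B$ is an effective $\mathbb{Q}$-Cartier $\mathbb{Q}$-divisor on $V$ for which $E$ is a log canonical place of $(V,B)$, then $\mathrm{mult}_E\pi^{*}B=1+a$, and restricting the equivalence $K_W+E+\widetilde{B}\qlin\pi^{*}(K_V+B)$ to $E$ produces $D:=\widetilde{B}\vert_{E}\qlin-(K_E+\Delta)$, an effective boundary on the log Fano $(E,\Delta)$ of Lemma~\ref{lemma:plt-log-Fano}, where $\widetilde{B}$ denotes the proper transform. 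Conversely, using the $\pi$-ampleness of $-E$ one lifts any prescribed $D\qlin-(K_E+\Delta)$ to such a $B$ on $V$. Inversion of adjunction then matches the log canonical centres of $(E,\Delta+D)$ other than $E$ with the log canonical centres of $(V,B)$ lying strictly inside $E$, hence over $O$.

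With this dictionary the forward implication goes by contraposition. Suppose $(E,\Delta+D)$ fails to be Kawamata log terminal for some effective $D\qlin-(K_E+\Delta)$. An exceptional singularity is in particular weakly-exceptional (it admits fewer log canonical places), so by Theorem~\ref{theorem:weakly-exceptional-criterion} the pair $(E,\Delta+D)$ is at least log canonical; being non-klt it carries a log canonical centre $Z\subsetneq E$. Lifting $D$ to a boundary $B$ on $V$ for which $E$ is a log canonical place, inversion of adjunction yields a second log canonical centre of $(V,B)$ over $O$ coming from $Z$, so that at least two divisors achieving the log canonical threshold meet over $O$ and $\mu_O(V,B)\geqslant 2$, contradicting Definition~\ref{definition:exceptional}.

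For the reverse implication, suppose $(V\ni O)$ is not exceptional, so that some effective $\mathbb{Q}$-Cartier $B$ has $\mu_O(V,B)\geqslant 2$ at its log canonical threshold. Since every $(E,\Delta+D)$ being klt forces in particular every such pair to be log canonical, Theorem~\ref{theorem:weakly-exceptional-criterion} shows that $(V\ni O)$ is weakly-exceptional; by Lemma~\ref{lemma:weakly-exceptional} the plt blow up is unique, so $E$ is the unique divisorial log canonical place over $O$. After extracting it by $\pi$, the configuration of centres witnessing $\mu_O\geqslant 2$ produces a log canonical centre $Z\subsetneq E$ of the adjoint pair $(E,\Delta+D)$ with $D=\widetilde{B}\vert_E$, so $(E,\Delta+D)$ is not Kawamata log terminal — the desired contradiction.

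The main obstacle is the precise inversion-of-adjunction correspondence in this relative plt setting: one must show that the log canonical places of $(V,B)$ whose centre is $O$, beyond $E$ itself, are in bijection with the proper log canonical centres of the adjoint pair $(E,\Delta+D)$, and track this matching in both directions. Closely tied to it is the extension step of the forward implication — realising an arbitrary $D\qlin-(K_E+\Delta)$ as $\widetilde{B}\vert_E$ for a boundary $B$ on $V$ with $E$ of discrepancy exactly $-1$ — which requires the $\pi$-ampleness of $-E$ together with a vanishing argument to lift sections from $E$ to $W$ without introducing extraneous non-klt loci elsewhere on $V$.
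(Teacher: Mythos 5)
The paper gives no proof of this statement at all: it is quoted verbatim from \cite[Theorem~4.9]{Pr98plt}. So your attempt must be judged against the standard argument behind that citation, and your skeleton is indeed the right one --- transport boundaries across the plt blow-up $\pi\colon W\to V$ and compare non-klt loci by adjunction along $E$. But as written the proposal is not a proof, for three concrete reasons. First, everything runs on inversion of adjunction for the plt pair, namely the equivalence ``$(W,E+\widetilde{B})$ is plt near $E$ $\iff$ $(E,\Delta+\widetilde{B}\vert_{E})$ is klt,'' together with the transfer of log canonicity needed to know that the lifted pair $(V,B)$ is actually log canonical (otherwise $\mu_{O}(V,B)\geqslant 2$ is not witnessed at threshold $1$, and Definition~\ref{definition:exceptional} is not contradicted). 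You yourself label this correspondence ``the main obstacle'' and leave it neither proved nor precisely cited; a proof whose central step is declared an obstacle is incomplete. The same applies to the lifting step, i.e.\ the surjectivity of $H^{0}\big(\mathcal{O}_{W}(-m(1+a)E)\big)\to H^{0}\big(\mathcal{O}_{E}(-m(1+a)E\vert_{E})\big)$, which requires relative Kawamata--Viehweg vanishing and is only flagged, not carried out. Second, the forward implication is circular in the context of this paper: ``exceptional implies weakly-exceptional'' is exactly Corollary~\ref{corollary:exceptional-criterion-2}, which the paper deduces \emph{from} Theorem~\ref{theorem:exceptional-criterion}, and your parenthetical ``(it admits fewer log canonical places)'' is not an argument. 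The crutch is avoidable: if $(E,\Delta+D)$ is not klt, replace $D$ by $cD+\frac{1-c}{m}M$, where $c$ is the log canonical threshold of $D$ with respect to $(E,\Delta)$ and $M$ is a general member of the very ample system $|-m(K_{E}+\Delta)|$; this stays in the class $-(K_{E}+\Delta)$ and reduces to the case where $(E,\Delta+D)$ is log canonical and not klt, with no appeal to weak exceptionality.

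Third, and most seriously, the reverse implication contains a false step. Weak exceptionality (uniqueness of the plt blow-up) does \emph{not} make $E$ ``the unique divisorial log canonical place over $O$'': other divisors over $O$ may be lc places of other pairs, and --- the real issue --- the pair $(V,cB)$ witnessing $\mu_{O}\geqslant 2$ need not have $E$ among its lc places at all, since $\mu:=\mathrm{mult}_{E}\,\pi^{*}(cB)$ can be strictly smaller than $1+a$. In that case your $D=\widetilde{B}\vert_{E}$ is \emph{not} $\mathbb{Q}$-linearly equivalent to $-(K_{E}+\Delta)$: restricting $K_{W}+E+c\widetilde{B}\qlin\pi^{*}(K_{V}+cB)+(1+a-\mu)E$ to $E$ shows it falls short by the ample class $(1+a-\mu)\big(-E\vert_{E}\big)$, so the hypothesis of the theorem says nothing about the pair you build. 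The repair is as follows: the lc places of $(V,cB)$ computing $\mu_{O}\geqslant 2$ have centres on $V$ containing $O$, hence centres on $W$ meeting $E=\pi^{-1}(O)$; raising the coefficient of $E$ to $1$ only decreases log discrepancies, so $(W,E+c\widetilde{B})$ has a divisor different from $E$ of non-positive log discrepancy whose centre meets $E$, hence is not plt near $E$; plt inversion of adjunction then gives that $(E,\Delta+c\widetilde{B}\vert_{E})$ is not klt; finally one adds a general effective ample divisor in the class $(1+a-\mu)\big(-E\vert_{E}\big)$ to obtain $D\qlin-(K_{E}+\Delta)$ which is still not klt. None of these steps appears in your proposal, and without them the reverse direction does not go through.
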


\begin{corollary}
\label{corollary:exceptional-criterion-1} The~singularity $(V\ni
O)$ is exceptional if $\alpha(V\ni O)>1$.
\end{corollary}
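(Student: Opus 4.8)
The plan is to deduce the statement directly from Prokhorov's criterion, Theorem~\ref{theorem:exceptional-criterion}, by unwinding the definition of the alpha-invariant. First I would observe that the hypothesis $\alpha(V\ni O)>1$ forces $\pi(E)=O$ (otherwise $\alpha(V\ni O)=0$), so the divisor $\Delta=\mathrm{Diff}_{E}(0)$ and the log Fano pair $(E,\Delta)$ are defined, and by the definition of $\alpha(V\ni O)$ the inequality $\alpha(V\ni O)>1$ is equivalent to $\mathrm{lct}(E,\Delta)>1$. Recall also that $(E,\Delta)$ itself is Kawamata log terminal by Lemma~\ref{lemma:plt-log-Fano}. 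By Theorem~\ref{theorem:exceptional-criterion}, it then suffices to show that for every effective $\mathbb{Q}$-divisor $D\qlin -(K_{E}+\Delta)$ the log pair $(E,\Delta+D)$ is Kawamata log terminal.

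So I would fix such a $D$. Since $\mathrm{lct}(E,\Delta)>1$, and the set of admissible $\lambda$ in the definition of $\mathrm{lct}(E,\Delta)$ is downward closed among positive rationals (shrinking an effective boundary only raises discrepancies), I can choose a rational number $\lambda$ with $1<\lambda<\mathrm{lct}(E,\Delta)$ for which $(E,\Delta+\lambda D)$ is log canonical for every competitor divisor; in particular it is log canonical for our fixed $D$. The key step is then the standard fact that a log canonical pair with a strictly over-scaled boundary is in fact Kawamata log terminal. Concretely, for any divisor $F$ over $E$ the discrepancy $a(F;E,\Delta+tD)=a(F;E,\Delta)-t\,\mathrm{ord}_{F}(D)$ is an affine non-increasing function of $t$, with $\mathrm{ord}_{F}(D)\ge 0$. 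If $\mathrm{ord}_{F}(D)=0$ then $a(F;E,\Delta+D)=a(F;E,\Delta)>-1$ because $(E,\Delta)$ is Kawamata log terminal; if $\mathrm{ord}_{F}(D)>0$ then
$$
a(F;E,\Delta+D)=a(F;E,\Delta+\lambda D)+(\lambda-1)\,\mathrm{ord}_{F}(D)\ge -1+(\lambda-1)\,\mathrm{ord}_{F}(D)>-1,
$$
using log canonicity of $(E,\Delta+\lambda D)$ together with $\lambda>1$. Either way all discrepancies exceed $-1$, so $(E,\Delta+D)$ is Kawamata log terminal, and the corollary follows.

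I do not expect a serious obstacle here: the result is essentially a repackaging of Theorem~\ref{theorem:exceptional-criterion} through the definition of $\alpha(V\ni O)$, and the only genuine ingredient is the elementary discrepancy computation above, which converts a log canonical bound for the scaled pair into a strict (Kawamata log terminal) bound for the unscaled one. The one point that requires minor care is justifying the choice of a single $\lambda>1$ that works uniformly for all competitor divisors $D$, which is exactly what the definition of $\mathrm{lct}(E,\Delta)$ as a supremum over a downward-closed set of coefficients provides.
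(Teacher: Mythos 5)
Your proposal is correct and follows exactly the route the paper intends: the corollary is stated without proof as an immediate consequence of Theorem~\ref{theorem:exceptional-criterion}, obtained by noting that $\alpha(V\ni O)>1$ forces $\pi(E)=O$ and $\mathrm{lct}(E,\Delta)>1$, and then converting log canonicity of $(E,\Delta+\lambda D)$ for some $\lambda>1$ into the Kawamata log terminal property of $(E,\Delta+D)$ via the standard affine-discrepancy (convexity) argument. Your care about the uniform choice of $\lambda$ and the case $\mathrm{ord}_{F}(D)=0$, using Lemma~\ref{lemma:plt-log-Fano}, is exactly the right way to fill in the details the paper leaves implicit.
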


\begin{corollary}
\label{corollary:exceptional-criterion-2} If $(V\ni O)$ is
exceptional, then $(V\ni O)$ is weakly-exceptional.
\end{corollary}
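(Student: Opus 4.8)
The plan is to derive the conclusion directly by comparing the two characterizations that have just been set up for the auxiliary log Fano pair $(E,\Delta)$ associated with a plt blow up $\pi\colon W\to V$ of the germ. The only content needed is that Kawamata log terminal singularities are in particular log canonical, so the exceptionality criterion is \emph{a priori} stronger than the weak-exceptionality criterion.

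First I would reduce to the situation in which the invariant is defined. Assume $(V\ni O)$ is exceptional. By Lemma~\ref{lemma:exceptional} we then have $\pi(E)=O$, so the boundary $\Delta=\mathrm{Diff}_{E}(0)$ and hence the pair $(E,\Delta)$ and its log canonical threshold $\mathrm{lct}(E,\Delta)$ are defined, and Theorems~\ref{theorem:weakly-exceptional-criterion} and~\ref{theorem:exceptional-criterion} apply. Next I would invoke Theorem~\ref{theorem:exceptional-criterion}: exceptionality means that for \emph{every} effective $\mathbb{Q}$-divisor $D\qlin -(K_{E}+\Delta)$ the log pair $(E,\Delta+D)$ has Kawamata log terminal singularities. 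Since every Kawamata log terminal pair is log canonical, $(E,\Delta+D)$ is log canonical for every such $D$. Taking $\lambda=1$ in the definition of $\mathrm{lct}(E,\Delta)$ given just before Theorem~\ref{theorem:weakly-exceptional-criterion}, this says exactly that $\lambda=1$ satisfies the defining condition, whence $\mathrm{lct}(E,\Delta)\geqslant 1$.

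Finally I would close the argument with Theorem~\ref{theorem:weakly-exceptional-criterion}, which asserts that $(V\ni O)$ is weakly-exceptional if and only if $\mathrm{lct}(E,\Delta)\geqslant 1$; equivalently, one may note that $\alpha(V\ni O)=\mathrm{lct}(E,\Delta)\geqslant 1\neq 0$ and appeal to the remark preceding Definition~\ref{definition:alpha-invariant}. The only point that requires care is the direction of the comparison between the two pointwise conditions: the criterion of Theorem~\ref{theorem:exceptional-criterion} forces Kawamata log terminality for all admissible $D$, while weak-exceptionality only demands log canonicity at the single threshold value $\lambda=1$, and since Kawamata log terminality is strictly stronger than log canonicity the former trivially yields the latter. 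Thus there is no genuine obstacle here — the entire content has been packaged into Lemma~\ref{lemma:exceptional} and Theorems~\ref{theorem:weakly-exceptional-criterion} and~\ref{theorem:exceptional-criterion}, and the corollary is a formal consequence.
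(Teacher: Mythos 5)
Your proof is correct and is exactly the argument the paper intends: the corollary is left without explicit proof as an immediate formal consequence of Theorem~\ref{theorem:exceptional-criterion}, the implication that Kawamata log terminal pairs are log canonical (giving $\mathrm{lct}(E,\Delta)\geqslant 1$ via $\lambda=1$), and Theorem~\ref{theorem:weakly-exceptional-criterion}, with Lemma~\ref{lemma:exceptional} ensuring $\pi(E)=O$ so that both criteria are applicable. The only trivial slip is that the note $\alpha(V\ni O)\neq 0 \iff (V\ni O)$ is weakly-exceptional appears just \emph{after} Definition~\ref{definition:alpha-invariant}, not before it, but this does not affect the argument.
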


Let $G$ be a~finite subgroup in $\GL_{n+1}(\mathbb{C})$, where
$n\geqslant 1$. Put
$$
\bar{G}=\phi\big(G\big)\subset\mathrm{Aut}\big(\mathbb{P}^{n}\big)\cong
\PGL_{n+1}\Big(\mathbb{C}\Big),
$$
where
$\phi\colon\GL_{n+1}(\mathbb{C})\to\mathrm{Aut}(\mathbb{P}^{n})$ is
the~natural projection. Put
$$
\mathrm{lct}\Big(\mathbb{P}^{n},\bar{G}\Big)=\mathrm{sup}\left\{\lambda\in\mathbb{Q}\ \left|%
\aligned
&\text{the~log pair}\ \left(\mathbb{P}^{n}, \lambda D\right)\ \text{has log canonical singularities}\\
&\text{for every $\bar{G}$-invariant effective $\mathbb{Q}$-divisor}\ D\sim_{\mathbb{Q}} -K_{\mathbb{P}^{n}}\\
\endaligned\right.\right\}\in\mathbb{R}.%
$$

\begin{lemma}[{\cite[Remark~3.2]{ChSh09}}]
\label{lemma:alpha-lct} Suppose  that $G$ does not contain
reflections. Then
$$
\alpha\big(V\ni O\big)=\left\{\aligned
&\mathrm{lct}\Big(\mathbb{P}^{n},\bar{G}\Big)\ \text{if}\ \mathrm{lct}\Big(\mathbb{P}^{n},\bar{G}\Big)\geqslant 1,\\
&0\ \text {if}\ \mathrm{lct}\Big(\mathbb{P}^{n},\bar{G}\Big)<1.\\
\endaligned
\right.
$$
\end{lemma}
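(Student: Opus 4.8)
The plan is to compute $\alpha(V\ni O)$ for $V=\mathbb{C}^{n+1}/G$ through one explicit plt blow-up and then match its ingredients with the data defining $\mathrm{lct}(\mathbb{P}^{n},\bar{G})$. First I would blow up $\mathbb{C}^{n+1}$ at the origin: the exceptional divisor is $\mathbb{P}^{n}$ and the total space is the geometric line bundle $\mathcal{O}_{\mathbb{P}^{n}}(-1)$, on which $G$ acts, acting on the exceptional $\mathbb{P}^{n}$ through its image $\bar{G}$ in $\PGL_{n+1}(\mathbb{C})$. Passing to the $G$-quotient produces a birational morphism $\pi\colon W\to V$ whose exceptional divisor is the irreducible divisor $E\cong\mathbb{P}^{n}/\bar{G}$, with $\pi(E)=O$. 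Since $G$ contains no reflections, the quotient map $\mathbb{C}^{n+1}\to V$ is \'etale in codimension one, whence (by \cite{Pr98plt}, \cite{Kud01}) $\pi$ is a plt blow-up of $(V\ni O)$ in the sense of Definition~\ref{definition:plt-blow-up}. In particular $\alpha(V\ni O)$ is computed as in the appendix, and we are in the case $\pi(E)=O$ rather than the degenerate case $\alpha(V\ni O)=0$.

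The key step is to identify $(E,\Delta)$ with the orbifold quotient of $\mathbb{P}^{n}$ by $\bar{G}$. Let $q\colon\mathbb{P}^{n}\to E\cong\mathbb{P}^{n}/\bar{G}$ be the quotient map. The codimension-one components $R_{i}\subset E$ of $\mathrm{Sing}(W)$ entering $\Delta=\mathrm{Diff}_{E}(0)$ are precisely the images under $q$ of the divisorial loci in $\mathbb{P}^{n}$ along which $\bar{G}$ has a nontrivial (cyclic) stabilizer, and the integer $m_{i}$ of the appendix is the order of that stabilizer; hence $\Delta$ coincides with the branch different of $q$. The Riemann--Hurwitz relation for the quotient then gives
$$
q^{*}\big(K_{E}+\Delta\big)\qlin K_{\mathbb{P}^{n}}.
$$
I would next record that pullback $D\mapsto q^{*}D$ (with inverse the normalized push-forward $\tfrac{1}{|\bar{G}|}q_{*}$) sets up a bijection between effective $\mathbb{Q}$-divisors $D$ on $E$ with $D\qlin -(K_{E}+\Delta)$ and $\bar{G}$-invariant effective $\mathbb{Q}$-divisors $D'=q^{*}D$ on $\mathbb{P}^{n}$ with $D'\qlin -K_{\mathbb{P}^{n}}$, which respects effectivity.

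To compare thresholds I would invoke the standard behaviour of log canonicity under finite quotient maps: because $q^{*}(K_{E}+\Delta)\qlin K_{\mathbb{P}^{n}}$, for every $\lambda\in\mathbb{Q}_{\ge 0}$ the pair $(E,\Delta+\lambda D)$ is log canonical if and only if $(\mathbb{P}^{n},\lambda q^{*}D)$ is log canonical, since discrepancies over $E$ and over $\mathbb{P}^{n}$ correspond under $q$. Taking the supremum over all admissible $D$ and using the bijection above yields $\mathrm{lct}(E,\Delta)=\mathrm{lct}(\mathbb{P}^{n},\bar{G})$. Substituting this equality into the definition of $\alpha(V\ni O)$ from the appendix (namely $\alpha(V\ni O)=\mathrm{lct}(E,\Delta)$ when the latter is $\ge 1$, and $\alpha(V\ni O)=0$ otherwise) gives exactly the claimed formula.

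The main obstacle I expect is the second step: verifying rigorously that $\pi$ is a genuine plt blow-up and that the different $\Delta=\mathrm{Diff}_{E}(0)$ produced by the appendix's construction is literally the branch different of $q$, so that $q^{*}(K_{E}+\Delta)\qlin K_{\mathbb{P}^{n}}$ holds on the nose. This is where the hypothesis that $G$ has no reflections is essential: it makes $\mathbb{C}^{n+1}\to V$ \'etale in codimension one, so that $V$ itself carries no boundary different and $\bar{G}$ faithfully reflects the singularity; without it one would have $V\cong\mathbb{C}^{n+1}/(G/R)$ for the reflection subgroup $R$ (cf.\ Remark~\ref{remark:reflections}), the relevant group would be $\overline{G/R}$ rather than $\bar{G}$, and the clean identification $E\cong\mathbb{P}^{n}/\bar{G}$ together with the matching of differents would break down.
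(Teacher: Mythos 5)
Your proposal is correct and is essentially the argument behind the paper's own treatment of this statement: the paper offers no proof beyond the citation \cite[Remark~3.2]{ChSh09}, and the construction there is exactly yours --- take the quotient by $G$ of the blow-up of $\mathbb{C}^{n+1}$ at the origin, identify $(E,\Delta)$ with the orbifold quotient $\mathbb{P}^{n}/\bar{G}$, and transport log canonicity and the linear equivalences through the quotient map $q$ to get $\mathrm{lct}(E,\Delta)=\mathrm{lct}(\mathbb{P}^{n},\bar{G})$. The step you flag as the main obstacle --- that $\Delta=\mathrm{Diff}_{E}(0)$ is the branch different, i.e. $q^{*}(K_{E}+\Delta)\qlin K_{\mathbb{P}^{n}}$ --- is handled by applying Riemann--Hurwitz to the quotient map $g\colon\widetilde{\mathbb{C}^{n+1}}\to W$, where the absence of reflections in $G$ ensures that the only ramification divisor of $g$ is the exceptional $\mathbb{P}^{n}$ itself, so that $g^{*}(K_{W}+E)=K_{\widetilde{\mathbb{C}^{n+1}}}+\widetilde{E}$, and restricting this equality to $\widetilde{E}$ and comparing with Riemann--Hurwitz for $q$ forces $\Delta$ to equal the branch different.
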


We see that the~number $\alpha\big(V\ni O\big)$ measures how
exceptional $(V\ni O)$ is (cf. Remark~\ref{remark:Shokurov-n-2}).

\begin{example}
\label{example:Klein-Valentiner} Let $G\subset\PSL_3(\C)$. Then
$$
\mathrm{lct}\Big(\mathbb{P}^{n},\bar{G}\Big)=\left\{\aligned
&4/3\ \text{if}\ \bar{G}\cong\PSL_2\Big(\mathbb{F}_{7}\Big),\\
&2\ \text{if}\ \bar{G}\cong\A_{6},\\
\endaligned
\right.
$$
by~\cite[Examples~1.9 and~6.5]{Ch07b}.
\end{example}

For a~fixed $n\in\mathbb{Z}_{>0}$, the~number
$\mathrm{lct}(\mathbb{P}^{n},\bar{G})$ is bounded by
Theorem~\ref{theorem:primitive}, because there exist only finitely
many primitive finite subgroups in $\SL_{n+1}(\mathbb{C})$ up to
conjugation (see~\cite{Col07}).

\begin{theorem}[{\cite[Theorem~1.24]{ChSh09}}]
\label{theorem:thomas} The~inequality
$\mathrm{lct}(\mathbb{P}^{n}, {\bar{G}})\leqslant 4(n+1)$ holds
for every $n\geqslant 1$.
\end{theorem}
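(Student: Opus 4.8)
The plan is to produce, for the given finite group $\bar{G}\subset\PGL_{n+1}(\C)$, a single $\bar{G}$-invariant effective $\Q$-divisor $D\sim_{\Q}-K_{\mathbb{P}^{n}}$ whose log canonical threshold is already at most $4(n+1)$. Since $\mathrm{lct}(\mathbb{P}^{n},\bar{G})$ is, by definition, the infimum of $\mathrm{lct}(\mathbb{P}^{n},D)$ over all such $D$, exhibiting one bad $D$ suffices. The mechanism that makes an individual divisor bad is concentration at a point: if $F$ is a semi-invariant of a lift $G\subset\GL_{n+1}(\C)$ of degree $d$ — equivalently $\{F=0\}$ is a $\bar{G}$-invariant hypersurface — with $\mathrm{mult}_{O}\{F=0\}\ge k$ at some point $O\in\mathbb{P}^{n}$, then $D=\frac{n+1}{d}\{F=0\}\sim_{\Q}-K_{\mathbb{P}^{n}}$, and blowing up $O$ (where the discrepancy is $n-1$) gives
$$
\mathrm{lct}\big(\mathbb{P}^{n},D\big)\le\frac{n}{\mathrm{mult}_{O}D}=\frac{nd}{(n+1)k}.
$$
Thus it is enough to find such an $F$ with $d/k\le 4(n+1)^{2}/n$.

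The existence of $F$ I would obtain by a dimension count. Fix a point $O$ and let $\Sigma=\bar{G}\cdot O$ be its orbit; a semi-invariant has the same multiplicity at every point of $\Sigma$, since applying $g$ only rescales $F$, so requiring $\mathrm{mult}_{O}\ge k$ imposes at most $\binom{n+k-1}{n}$ linear conditions on the space of degree-$d$ semi-invariants. By Molien's formula that space has dimension $\sim\binom{n+d}{n}/\big|[G,G]\big|$, so a nonzero $F$ exists once
$$
\binom{n+d}{n}\;>\;\big|[G,G]\big|\binom{n+k-1}{n},
$$
which for appropriate $d,k$ forces $d/k\sim\big|[G,G]\big|^{1/n}$. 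Feeding this into the previous display yields $\mathrm{lct}(\mathbb{P}^{n},\bar{G})\lesssim\frac{n}{n+1}\big|[G,G]\big|^{1/n}$, and the desired inequality reduces to the purely group-theoretic bound $\big|[G,G]\big|\le\big(4(n+1)^{2}/n\big)^{n}$, the binomial asymptotics being absorbed into the constant $4$.

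This last bound I would establish by splitting on primitivity. If $\bar{G}$ is imprimitive, a nontrivial $\bar{G}$-invariant decomposition $\C^{n+1}=\bigoplus V_{i}$ gives a $\bar{G}$-semi-invariant of controlled degree: for one-dimensional blocks the product $x_{0}\cdots x_{n}$ of the permuted coordinate forms is a semi-invariant of degree $n+1$, whence $D=\sum_{i}\{x_{i}=0\}$ is a simple normal crossing divisor and $\mathrm{lct}(\mathbb{P}^{n},\bar{G})\le 1$, while higher-dimensional blocks are handled by induction on $n$, taking products of block semi-invariants. If $\bar{G}$ is primitive, the finiteness of primitive finite subgroups of $\SL_{n+1}(\C)$ (see \cite{Col07}) together with the explicit Jordan--Collins order estimates gives $|G|\le\big(4(n+1)\big)^{n}$, and hence $\big|[G,G]\big|\le|G|$ is small enough for the count of the second paragraph to apply.

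The main obstacle will be quantitative rather than conceptual: one must make the Molien lower bound on the number of semi-invariants, the explicit order bound for primitive groups, and the imprimitive induction all precise enough to land on the clean constant $4(n+1)$ uniformly in $n$. In particular the binomial estimates are loose for small $n$, so those values — where the primitive groups are in any case completely classified — would have to be checked separately by hand; this is exactly where the generous factor $4$ is spent, and confirming that the induction in the imprimitive case does not let the degree of $F$ grow faster than $(n+1)^{2}$ is the delicate point to verify.
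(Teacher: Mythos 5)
Your strategy is genuinely different from the one behind the statement, and as written it has real gaps. The paper takes this result from \cite{ChSh09}, where the proof is a two-line reduction to Thompson's theorem \cite{Th81}: a lift $G\subset\SL_{n+1}(\C)$ of $\bar{G}$ has a nonzero semi-invariant $F$ of degree $d\leqslant 4(n+1)^{2}$, so $D=\frac{n+1}{d}\{F=0\}\sim_{\Q}-K_{\mathbb{P}^{n}}$ is $\bar{G}$-invariant, and $(\mathbb{P}^{n},\lambda D)$ already fails to be log canonical along the divisor $\{F=0\}$ itself as soon as $\lambda\frac{n+1}{d}>1$; hence $\mathrm{lct}(\mathbb{P}^{n},\bar{G})\leqslant d/(n+1)\leqslant 4(n+1)$. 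No blow-up, no multiplicity estimate, and no counting is needed: the coefficient bound alone suffices, and all of the group theory is packaged in \cite{Th81}. What you propose is, in effect, to reprove a Thompson-type degree bound from scratch (a norm/induction argument for imprimitive groups, a dimension count for primitive ones); your reduction $\mathrm{lct}\leqslant nd/((n+1)k)$ is correct, but the road you then take is much harder than the one actually used.

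Two concrete gaps. First, the dimension count is wrong as stated: the span of all degree-$d$ semi-invariants is the space of $[G,G]$-invariants, but the subspace of it cut out by your multiplicity conditions at the single point $O$ is not $G$-stable, so a nonzero element of that subspace need not be a semi-invariant. You must impose the conditions character-by-character (or work with honest $G$-invariants), and then the only a priori lower bound for the relevant eigenspace is of size $\binom{n+d}{n}/|G|$ rather than $\binom{n+d}{n}/|[G,G]|$; so the group-theoretic input you need is a bound on $|G|$, and moreover Molien's formula gives such counts only asymptotically, with error terms whose constants depend on the group. Second, the bound $|G|\leqslant(4(n+1))^{n}$ for primitive groups is not citable: it already fails for $n=1$ (where $|2.\A_{5}|=120>8$), and Collins' theorem bounds the index of an abelian normal subgroup by $(n+2)!$ only for $n+1\geqslant 71$; for primitive $G$ this gives $|G/Z(G)|\leqslant(n+2)!$ in that range, but every dimension $2\leqslant n+1\leqslant 70$ would then need separate treatment, and explicit classifications of primitive groups exist only in very small dimensions. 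Carrying all of this out so as to land on the clean constant $4(n+1)$ is precisely the content of Thompson's theorem, so the efficient fix is to quote \cite{Th81} rather than to rebuild it.
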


In fact, we expect the~following to be true (cf. \cite{Th81}).

\begin{conjecture}
\label{conjecture:Thomas} There is $\alpha\in\mathbb{R}$ such that
$\mathrm{lct}(\mathbb{P}^{n}, {\bar{G}})\leqslant \alpha$ for any
$\bar{G}\subset\mathrm{Aut}(\mathbb{P}^{n})$~and~$n\geqslant 1$.
\end{conjecture}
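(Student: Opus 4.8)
The plan is to reduce the conjecture to the single task of exhibiting, for every finite $\bar{G}\subset\mathrm{Aut}(\P^n)$ and every $n$, \emph{one} $\bar{G}$-invariant effective $\Q$-divisor $D\qlin -K_{\P^n}$ whose log canonical threshold $\mathrm{lct}(\P^n,D)$ is bounded above by an absolute constant $\alpha$. Indeed, unwinding the definition one has $\mathrm{lct}(\P^n,\bar{G})=\inf_D\mathrm{lct}(\P^n,D)$, the infimum ranging over invariant $D\qlin -K_{\P^n}$, so a single sufficiently singular $D$ forces the bound. The crude estimate $\mathrm{lct}(\P^n,\bar{G})\le 4(n+1)$ of Theorem~\ref{theorem:thomas} comes from a single orbit combined with the elementary inequality $\mathrm{lct}(\P^n,D)\le n/\mathrm{mult}_x(D)$; the entire content of the conjecture is to replace the growing quantity $4(n+1)$ by a constant, which forces the production of invariant divisors far more singular than a naive orbit construction supplies.

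First I would dispose of the structurally simple groups. If $\bar{G}$ is imprimitive, say $\C^{n+1}=\bigoplus_i V_i$ with the blocks permuted, then the block decomposition yields an invariant divisor $\qlin -K_{\P^n}$ (in the monomial case the reduced union of the coordinate hyperplanes, a simple-normal-crossing divisor of threshold $1$), so $\mathrm{lct}(\P^n,\bar{G})\le 1$. By Theorem~\ref{theorem:primitive} and \cite{Col07} it remains to treat the primitive groups, finitely many in each fixed dimension. Applying Jordan's theorem, $\bar{G}$ contains a normal abelian subgroup $A$ of bounded index; if $A$ is non-central it splits $\C^{n+1}$ into at least two $\bar{G}$-permuted isotypic pieces, returning us to the imprimitive situation and again giving a bounded threshold. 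Thus the essential case is that of primitive groups all of whose normal abelian subgroups are scalar, forcing $\bar{G}$ to be close to quasi-simple or almost-simple (its socle a product of simple groups), exactly the regime of $\A_7$ and $2.\HaJ$.

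For these remaining primitive (quasi-)simple groups I would invoke the classification of finite simple groups. The sporadic groups, the exceptional groups, and the classical groups of bounded rank contribute only finitely many families whose minimal faithful projective representations have bounded dimension, so their thresholds are bounded (and, as the Hall--Janko value $7/6$ suggests, small). The genuine obstruction is posed by the infinite families $\PSL_2(\F_q)$ and the other classical groups acting in dimension growing with $q$. For these I would build invariant divisors from a \emph{small} orbit $\mathcal{O}=\bar{G}\cdot x$, such as the orbit of a highest-weight point, imposing high multiplicity along $\mathcal{O}$: a standard dimension count (complementary to the estimate in Lemma~\ref{lemma:degree}) shows that the space of invariant forms of degree $n+1$ vanishing to large order along $\mathcal{O}$ is non-empty once $\deg\mathcal{O}$ is small relative to $\binom{2n+1}{n}$, and the resulting $D$ then satisfies $\mathrm{lct}(\P^n,D)\le n/\mathrm{mult}_x(D)$.

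The hardest part, and the reason the statement is still a conjecture, is precisely the uniformity over these infinite families: one must show that orbit sizes and attainable multiplicities can be balanced so that $n/\mathrm{mult}_x(D)$ stays below a constant independent of both $q$ and $n$, rather than merely below $4(n+1)$. This seems to demand genuine invariant-theoretic input --- uniformly low-degree invariants with controlled singularities for $\PSL_2(\F_q)$ and its companions, or a uniform equivariant interpolation estimate --- well beyond the single-orbit argument behind Theorem~\ref{theorem:thomas}. I therefore expect this uniform control of the quasi-simple families, not the reductions to the imprimitive and Jordan cases, to be the decisive obstacle (cf.~\cite{Th81}).
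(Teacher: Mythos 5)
This statement is a conjecture, and the paper gives no proof of it: beyond Theorem~\ref{theorem:thomas} (the bound $4(n+1)$, quoted from \cite{ChSh09}) the paper only remarks that one can try to attack the conjecture via the classification of finite simple groups. So the honest verdict on your proposal is not ``different route'' but ``no proof'': what you have written is a reduction of the problem plus an explicit admission that the decisive step is open, which is exactly the status the conjecture already has. Your preliminary reductions are essentially sound: for imprimitive $\bar{G}$ one gets $\mathrm{lct}(\mathbb{P}^{n},\bar{G})\leqslant 1$ (either from an invariant divisor adapted to the block decomposition, or from Theorems~\ref{theorem:primitive} and~\ref{theorem:criterion}), and a non-central normal abelian subgroup yields a $\bar{G}$-permuted eigenspace decomposition, hence imprimitivity. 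But these dispose only of the cases where the bound is trivial. For the primitive groups, your plan --- impose multiplicity of order $n/\alpha$ along a small orbit on an invariant divisor of degree $n+1$ and use $\mathrm{lct}\leqslant n/\mathrm{mult}_{x}(D)$ --- is stated as a hope, not established: you give no argument that the required equivariant interpolation is possible with the ratio $n/\mathrm{mult}_{x}(D)$ bounded uniformly in $n$ and in the group, and you yourself conclude that this uniformity ``seems to demand genuine invariant-theoretic input'' and is ``the decisive obstacle''. A proof cannot end by naming its main step as an obstacle; as it stands, the core of the conjecture is untouched.

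One concrete error in the reduction itself deserves flagging. You claim that a primitive group all of whose normal abelian subgroups are scalar must be ``close to quasi-simple or almost-simple (its socle a product of simple groups)''. That is false: primitive linear groups may have non-abelian normal $p$-subgroups of extraspecial (Heisenberg) type, with every normal abelian subgroup scalar. The paper itself features such a group, $G_{7}=\mathbb{H}_{7}\rtimes\SL_2(\F_7)\subset\SL_7(\C)$ in Theorem~\ref{theorem:Feit-7}, and its analogues $\mathbb{H}_{p}\rtimes\Sp_2(\F_p)$ exist in every prime dimension $p$, giving yet another infinite family across dimensions that any classification-based argument must treat separately. (For these particular groups low-degree invariants do exist --- cf.\ the proof of Theorem~\ref{theorem:dim-7}, which uses an invariant of degree $7$ for $G_{7}$ --- so they satisfy $\mathrm{lct}\leqslant 1$, but your case division as written does not see them.) So even the structural part of your outline needs repair before the genuinely open uniformity question for the classical families can be isolated.
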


One can try to tackle Conjecture~\ref{conjecture:Thomas} using
the~classification of finite simple groups.

\end{document}